\newtheorem{theorem}{Theorem}[section]
\newtheorem{proposition}[theorem]{Proposition}
\newtheorem{lemma}[theorem]{Lemma}
\theoremstyle{plain}
\newcommand{\R}{\mathbb{R}}
\newcommand{\N}{\mathbb{N}}
\renewcommand{\P}{\mathbb{P}}
\newcommand{\E}{\mathbb{E}}
\newcommand{\1}{\mathbbm{1}}
\renewcommand{\S}{\mathcal{S}}
\newcommand{\C}{\mathcal{C}}
\newcommand{\ind}{\mathbbm{1}}
\newcommand{\Ps}{\mathbb{P}^{\omega,\tau}}
\newcommand{\Es}{\mathbb{E}^{\omega,\tau}}
\newcommand{\Pt}{\tilde{\mathbb{P}}}
\newcommand{\Et}{\tilde{\mathbb{E}}}
\newcommand{\A}{\mathcal{A}}
\newcommand{\Js}{{\mathcal{J}(\omega,\tau)}}
\newcommand{\eps}{\varepsilon}
\begin{document}

\begin{frontmatter}
\title{Exceptional times of the critical dynamical Erd\H{o}s-R\'enyi graph}
\runtitle{Exceptional times}

\begin{aug}
  \author{\fnms{Matthew, I.} \snm{Roberts}\thanksref{m1}\ead[label=e1]{mattiroberts@gmail.com}}
\and
\author{\fnms{Bat\i{}} \snm{\c{S}eng\"ul}\thanksref{m2}\ead[label=e2]{batisengul@gmail.com}}

\runauthor{M. I. Roberts and B. \c{S}eng\"ul}

\affiliation{University of Bath\thanksmark{m1} and Bank of America, Merrill Lynch\thanksmark{m2}}

\address{University of Bath\\
Claverton Down\\
Bath\\
BA2 7AY\\
United Kingdom\\
\phantom{E-mail:\ }\printead*{e1}}

\address{2 King Edward St\\
  London\\
  EC1A 1HQ\\
  United Kingdom\\
\phantom{E-mail:\ }\printead*{e2}}
\end{aug}

\begin{abstract}
  In this paper we introduce a network model which evolves in time, and study its largest connected component.
  We consider a process of graphs $(G_t:t \in [0,1])$, where initially we start with a critical Erd\H{o}s-R\'enyi graph ER$(n,1/n)$, and then evolve forwards in time by resampling each edge independently at rate $1$.
  We show that the size of the largest connected component that appears during the time interval $[0,1]$ is of order $n^{2/3}\log^{1/3} n$ with high probability. This is in contrast to the largest component in the static critical Erd\H{o}s-R\'enyi graph, which is of order $n^{2/3}$.
\end{abstract}

\begin{keyword}[class=MSC]
  \kwd{05C80}
  \kwd{82C20}
  \kwd{60F17}
\end{keyword}

\begin{keyword}
  \kwd{Noise sensitivity}
  \kwd{Erdos-Renyi}
  \kwd{dynamical random graphs}
  \kwd{temporal networks}
  \kwd{giant component}
\end{keyword}

\end{frontmatter}

\section{Introduction and main result}\label{sec:intro}

An Erd\H{o}s-R\'enyi graph ER$(n,p)$ is a random graph on $n$ vertices $\{1,\dots,n\}$, where each pair of vertices is connected by an edge with probability $p$, independently of all other pairs of vertices.
Erd{\H{o}}s and R{\'e}nyi~\cite{erdos_renyi_on} introduced this graph (or rather a very closely related graph) and examined the structure of its connected components.
Since then, Erd\H{o}s-R\'enyi graphs have been intensively studied and have become a cornerstone of probability and combinatorics: see for example~\cite{bollobas_book, durrett_book, janson_book}~and references therein.

Let $L_n$ denote the largest connected component of an Erd\H{o}s-R\'enyi graph ER$(n,p)$ with $p=\mu/n$. We write $|L_n|$ for the number of vertices in $L_n$. 
This quantity exhibits a phase transition as $\mu$ passes $1$:
\begin{list}{(\roman{enumi})}{\usecounter{enumi}}
\item if $\mu <1$, then $(\log n)^{-1}|L_n|$ converges in probability to $1/\alpha(\mu)$ where $\alpha(\mu)=\mu-1-\log\mu\in(0,\infty)$ (see \cite[Corollaries 5.8 and 5.11]{bollobas_book});
\item if $\mu =1$, then $n^{-2/3}|L_n|$ converges in distribution to some non-trivial random variable as $n \rightarrow \infty$ (see \cite{aldous_mult_coal});
\item if $\mu >1$, then $n^{-1}|L_n|$ converges in probability to $\theta(\mu)$ where $\theta(\mu)\in (0,1)$ is the unique solution to $\theta(\mu) = 1-e^{-\mu \theta(\mu)}$ (see \cite[Theorem 5.4]{janson_book}).
\end{list}
The model ER$(n,1/n)$ is therefore referred to as the critical Erd\H{o}s-R\'enyi graph.

In this paper we study a dynamical version of the critical Erd\H{o}s-R\'enyi graph, a process of random graphs $(G_t:t \in [0,1])$ on the vertex set $\{1,\dots,n\}$, constructed as follows.
Initially $G_0$ is distributed as ER$(n,1/n)$.
Then the presence of each edge $vw$ between vertices $v\neq w$ is resampled at rate $1$, independently of all other edges. That is, at the times of a rate 1 Poisson process, we remove the edge $vw$ if it exists, and then place an edge with probability $1/n$, independently of everything else.
Clearly ER$(n,1/n)$ is invariant for this process, so for each $t \geq 0$, $G_t$ is a realisation of ER$(n,1/n)$. 
Let $L_n(t)$ denote the largest connected component of $G_t$.
Then for each fixed $t\in [0,1]$, $|L_n(t)|$ is of order $n^{2/3}$ with high probability as $n\to\infty$. Our main result gives a contrasting statement about the size of $\sup_{t\in[0,1]} |L_n(t)|$, showing that with high probability there are (rare) times when $|L_n(t)|$ is of order $n^{2/3}\log^{1/3}n$ (where we write $\log^\alpha n$ to mean $(\log n)^\alpha$).

\begin{theorem}\label{thm:noise}
  As $n\to\infty$,
  \[
    \P \left( \frac{\sup_{t \in [0,1]}|L_n(t)|}{n^{2/3}\log^{1/3}n}>\beta \right)\longrightarrow \begin{cases} 1 &\hbox{if } \beta < 2/3^{2/3}\\ 0 &\hbox{if } \beta\ge 2/3^{1/3}.\end{cases}
  \]
\end{theorem}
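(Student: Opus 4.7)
My approach has three parts. I would first establish a sharp tail estimate for the size of the largest component in a single near-critical Erd\H{o}s-R\'enyi graph: for $p_n$ a small perturbation of $1/n$ and for $y\to\infty$ with $y=O(\log^{1/3} n)$, I expect
\begin{equation*}
  \P\bigl(|L_n|\ge y\,n^{2/3}\bigr)=\exp\bigl(-\tfrac{1}{8}y^3(1+o(1))\bigr).
\end{equation*}
The standard route is via the breadth-first exploration process: the component of a given vertex is coded as an excursion of a random walk with Binomial increments, and a Cram\'er-type argument with an optimally tilted exponential martingale produces the cubic rate function with the explicit constant $1/8$. The numerical constants $2/3^{1/3}$ and $2/3^{2/3}$ in the theorem both come from this rate function combined with the two arguments below.

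For the upper bound I would partition $[0,1]$ into $\lceil n^{1/3}\rceil$ subintervals of length $\delta_n\asymp n^{-1/3}$ and, on each subinterval $[s,s+\delta_n)$, work with the \emph{union graph} $\hat G$ consisting of all edges present at some time in that subinterval. In the dynamics each non-edge is created at rate $p=1/n$ while each present edge is removed at rate $1-p$, so $\hat G$ is distributed as ER$(n,\hat p)$ with $\hat p=1-(1-p)e^{-p\delta_n}\le(1+\delta_n)/n$, which still lies in the critical window. Since $\sup_{t\in[s,s+\delta_n)}|L_n(t)|\le |L_n(\hat G)|$, applying the tail estimate to each $\hat G$ and a union bound over the $\sim n^{1/3}$ subintervals gives a bound of order $n^{1/3}\cdot n^{-\beta^3/8}$, which tends to $0$ precisely when $\beta^3>8/3$, i.e.\ $\beta>2/3^{1/3}$; the boundary case is handled by absorbing a logarithmic correction into the tail estimate.

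For the lower bound I plan a second-moment argument. A natural counter is
\begin{equation*}
  N=\sum_{i=1}^{M}\ind\!\Bigl\{\sup_{t\in I_i}|L_n(t)|>\beta\,n^{2/3}\log^{1/3} n\Bigr\},
\end{equation*}
where $I_1,\ldots,I_M$ is a partition of $[0,1]$ on a scale $\Delta_n$ chosen so that events on different subintervals are approximately independent. Step~1 forces $\E[N]$ to grow as a positive power of $n$ as soon as $\beta<2/3^{2/3}$, and Paley--Zygmund will then yield $\P(N\ge 1)\to 1$ provided $\E[N^2]\le(1+o(1))\E[N]^2$. The main technical obstacle I foresee is therefore the two-time estimate
\begin{equation*}
  \P\bigl(|L_n(s)|>K,\;|L_n(t)|>K\bigr)\le (1+o(1))\,\P\bigl(|L_n(s)|>K\bigr)\,\P\bigl(|L_n(t)|>K\bigr)
\end{equation*}
for $K=\beta n^{2/3}\log^{1/3} n$ and $s,t\in[0,1]$ at distance at least $\Delta_n$. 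This is a quantitative decoupling / noise-sensitivity statement for the largest component, which I would try to prove by coupling $(G_s,G_t)$ to a product measure through the edges resampled in $(s,t)$ and by conditioning on the exploration of the exceptional giant component at time $s$. The factor-of-$3^{1/3}$ gap between the upper and lower thresholds is consistent with the slack that is essentially unavoidable in such a second-moment approach.
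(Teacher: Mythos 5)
Your one-graph tail estimate and your upper bound are essentially the paper's argument: the tail asymptotic is exactly the imported near-critical estimate (Proposition \ref{prop:pittel}), and the no-exceptional-times proof proceeds, as you propose, by cutting $[0,1]$ into $\approx n^{1/3}$ intervals, dominating the union graph on each interval by an ER$(n,(1+n^{-1/3})/n)$, and taking a union bound (note that with $\delta_n\asymp n^{-1/3}$ the supercritical tilt produces a factor $e^{\frac12\beta^2\log^{2/3}n}$, so the boundary value $\beta=2/3^{1/3}$ needs a slightly shorter interval length, not merely ``absorbing a logarithmic correction''). The problem is the lower bound, which is the heart of the theorem. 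You correctly identify the two-time decoupling estimate as the main obstacle, but your proposed mechanism for it --- coupling $(G_s,G_t)$ through the edges resampled in $(s,t)$ and conditioning on the exploration of the large component at time $s$ --- is not a proof and, at the precision required, will not go through as described. For $\E[N]$ to diverge you need time separations $\Delta_n$ that are polynomially small in $n$, so you need a \emph{quantitative} noise-sensitivity statement: the correlation must already have collapsed to $(1+o(1))$ times the product of probabilities after resampling only an $O(\Delta_n)$ fraction of edges, for events of probability $n^{-\beta^3/8+o(1)}$. Conditioning on a size-$K$ component at time $s$ biases the whole configuration at time $t$, and controlling that bias to within a $(1+o(1))$ factor is precisely what the paper could not do by direct coupling; it is why Sections \ref{sec:fourier} and \ref{sec:small_beta} develop Fourier analysis for the non-uniform product measure, a generalization of the Schramm--Steif revealment theorem in which a distinguished edge set $U$ is excluded (Theorem \ref{thm:randalg}), a pivotality bound (Lemma \ref{lemma:fourier_to_pivotal}) to handle the Fourier mass on sets meeting $U$, and a separate combinatorial path-counting argument for small $|t-s|$, with the two regimes glued at $|t-s|\approx n^{-2/9}$.

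There is also a structural mismatch in your choice of counter. The paper does not use interval indicators of the event $\{\sup_{t\in I_i}|L_n(t)|>K\}$; it uses $Z_v=\int_0^1\ind_{\{|\C_v(t)|\in I\}}\,{\rm d}t$ summed over vertices, for a window $I=[\beta n^{2/3}\log^{1/3}n,\,2\beta n^{2/3}\log^{1/3}n]$. This is not cosmetic: the revealment machinery only works because the function examined is ``vertex $v$ lies in a large component,'' so that the unavoidable revelations are confined to the edges at $v$ (handled by pivotality), whereas no low-revealment algorithm can decide whether \emph{some} large component exists, so the largest-component event you propose to decorrelate is exactly the one the available tools cannot handle. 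Finally, the gap between $2/3^{2/3}$ and $2/3^{1/3}$ is not intrinsic slack of the second-moment method; it comes from the specific strength of the revealment bound (the restriction to $|S|=k$ rather than $|S|\le k$ in Theorem \ref{thm:randalg}) combined with the $\delta=n^{-2/9}$ trade-off, and the authors conjecture the sharp threshold is $2/3^{1/3}$. As it stands, your proposal reduces the theorem to a decoupling claim that is itself the hard content of the paper, without a viable route to proving it.
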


\noindent
We will also give a result on the noise sensitivity of component sizes in Proposition \ref{prop:ns}, once we have developed the required notation.

\subsection{Further discussion around Theorem \ref{thm:noise}}\label{sec:discussion}

It is not difficult to deduce from known results (see for example \cite{bollobas_book}) together with a first moment method (see Section \ref{sec:large_beta}) that for Erd\H{o}s-R\'enyi graphs away from criticality, the size of the biggest component in the dynamical model is of the same order as in the static model. That is, for ER$(n,\mu/n)$ with $\mu < 1$, $\sup_{t\in[0,1]}|L_n(t)|$ is of order $\log n$ with high probability; and for ER$(n,\mu/n)$ with $\mu>1$, $\sup_{t\in[0,1]}|L_n(t)|$ is of order $n$ with high probability. The critical graph ER$(n,1/n)$ is therefore the most interesting case.

Returning to ER$(n,1/n)$, the obvious open questions posed by Theorem \ref{thm:noise} are:
\begin{itemize}
  \item Does $\sup_{t\in[0,1]}|L_n(t)|/(n^{2/3}\log^{1/3} n)$ converge in probability as $n\to\infty$? If so, what is its limit?
  \item What does the set of exceptional times, i.e.~$\{t\in[0,1] : |L_n(t)|\ge \beta n^{2/3}\log^{1/3} n\}$, look like?
  \item What does the largest component look like at exceptional times?
  \item How does $\inf_{t\in[0,1]} |L_n(t)|$ behave?
  \item What if we resample each edge at rate $n^{\gamma}$ for $\gamma\neq 0$?
\end{itemize}

For the first question we conjecture that $\sup_{t\in[0,1]}|L_n(t)|/(n^{2/3}\log^{1/3} n)\to 2/3^{1/3}$ in probability as $n\to\infty$. We hope to address this in future work, but substantial further technical estimates are required.

We can say a limited amount about the second question. On the one hand, it is easy to check that the Lebesgue measure of the set of times at which there is a component of size at least $A_n n^{2/3}$ converges in probability to zero whenever $A_n\to\infty$, so certainly the Lebesgue measure of the set of exceptional times converges in probability to zero for any $\beta>0$. On the other hand, let $X_\beta(t)$ be $1$ when the largest component is larger than $\beta n^{2/3}\log^{1/3}n$, and $0$ at other times. For $\delta>0$, let $N_\beta(\delta)$ be the number of times in the interval $[0,\delta]$ at which $X_\beta(t)$ changes its value. Jonasson and Steif \cite[Corollary 1.6]{steif_jonasson_volatility} showed that if $\P(|L_n|\ge \beta n^{2/3}\log^{1/3} n)\to 0$ but $\P(N_\beta(1)\ge 1)\to 1$, then $N_\beta(\delta)\to\infty$ in distribution as $n\to\infty$ for any fixed $\delta>0$. Theorem \ref{thm:noise} tells us that these conditions hold for $\beta<2/3^{2/3}$.

Going further than this, one might like to know whether the set of exceptional times $\{t\in [0,1]: |L_n(t)|\geq \beta n^{2/3}\log^{1/3} n\}$ converges in distribution as $n\to\infty$, and if so, what the Hausdorff dimension of this limiting set is. We conjecture that the Hausdorff dimension is $(1-3\beta^3/8)\vee 0$. This conjecture follows naturally from a simple box counting argument using the sets $\mathcal E_i$ from our upper bound in Section \ref{sec:large_beta}. Again, we hope to investigate this in future work.

For the third question, it is natural to guess---in analogy with work on dynamical planar lattice percolation by Hammond, Pete and Schramm \cite{MR3433575}---that the largest component at ``typical'' exceptional times looks like a static component conditioned to have size at least $\beta n^{2/3}\log^{1/3} n$. Unfortunately our combinatorial method for estimating the probability that such a component exists (using results from \cite{roberts:ER}) gives little insight into its structure. Analysis using Brownian excursions, after Aldous \cite{aldous_mult_coal} and Addario-Berry, Broutin and Goldschmidt \cite{broutin_et_al:continuum_limit_critical_rgs}, might shed more light on this problem.

The fourth question appears to be substantially different from Theorem \ref{thm:noise} and would require a different approach.

For the fifth question, the most interesting case is $\gamma=-1/3$. If we rescale component sizes by $n^{2/3}$ then, based on Aldous' multiplicative coalescent \cite{aldous_mult_coal}, we expect to see something like a multiplicative fragmentation-coalescent process. Rossignol \cite{rossignol} has shown that this is indeed the case.

\subsection{Background}\label{sec:bg}
Dynamical percolation was introduced by H{\"a}ggstr{\"o}m, Peres and Steif~\cite{MR1465800}. Take a graph $G=(V,E)$ and create a dynamical random graph $(G_t,\, t\ge0)$ as follows. Each edge $e\in E$ is present at time $0$ with probability $p\in[0,1]$, independently of all others. Each edge is then rerandomized independently at the times of a rate 1 Poisson process. This model is known as \emph{dynamical bond percolation} on $G$ with parameter $p$. (Alternatively we may say that each \emph{vertex} is present with probability $p$ and rerandomized at rate 1; this is known as dynamical site percolation.) The model that we investigate in this paper is then simply dynamical bond percolation when $G$ is the complete graph on $n$ vertices and $p=1/n$.

A question of particular interest for infinite graphs is whether there exists a time at which an infinite component appears. Schramm and Steif~\cite{MR2630053} were able to show that for critical ($p=1/2$) dynamical site percolation on the triangular lattice, almost surely, there are times in $[0,\infty)$ when an infinite component is present, even though at any fixed time $t$ there is almost surely no infinite component. The times at which an infinite component exists are then known as \emph{exceptional times}. Their proof relied on tools from Fourier analysis, randomized algorithms, and the theory of noise sensitivity of Boolean functions as introduced by Benjamini, Kalai and Schramm~\cite{MR1813223}. We will see similar methods appearing in our proof, although in each case there will be a non-standard approach required.

Since its introduction roughly 20 years ago, dynamical percolation has been studied intensively~\cite{MR1959784,MR2223951,MR2736153,MR1716767,MR3433575,MR2521411} in various settings (see also~\cite{MR2883377,garban_steif_noise,MR2762676} and references within). Most of the study has so far been restricted to infinite graphs and the question of existence of an infinite component. Of the very few results on finite graphs, Lubetzky and Steif \cite{MR3433581} studied the noise sensitivity properties of various Boolean functions related to Erd\H{o}s-R\'enyi graphs; and Jonasson and Steif \cite{steif_jonasson_volatility} gained results about dynamical percolation on infinite spherically symmetric trees restricted to the first $n$ levels, in the context of what they call the volatility of Boolean functions (which we mentioned using different notation in Section \ref{sec:discussion} in the discussion around the second open question).
  
The critical Erd\H{o}s-R\'enyi graph is one of the simplest models of random networks. Several more complex random graph models, such as preferential attachment graphs, have since been introduced in an attempt to more realistically model the features seen in real-world networks such as the world-wide web; see \cite{vanrandom} for an overview. The model we consider in this paper is known within the network science literature as a \emph{temporal network}. The report~\cite{Holme201297} gives a good introduction to the subject. There is interest in comparing real networks with random models, and in order to do this for networks that change with time (temporal networks), \cite{Holme201297} details several ways of constructing dynamical random graphs loosely based around the configuration model. Our dynamical Erd\H{o}s-R\'enyi model is simpler, and we hope that it will lead to further progress in the probabilistic community in investigating other temporal network models.

  \subsection{Proof ideas for Theorem \ref{thm:noise}}
  The proof that if $\beta> 2/3^{1/3}$ then there are no exceptional times (i.e.~with high probability there are no times in $[0,1]$ when there is a component of size bigger than $\beta n^{2/3}\log^{1/3} n$) uses a standard first moment method. We split $[0,1]$ into many smaller intervals, use known asymptotics for the probability of seeing a large component for $p$ slightly bigger than $1/n$ to bound the probability of seeing an exceptional time on one of these small intervals, and then take a union bound. The main interest of this paper is therefore the result that there are exceptional times for $\beta< 2/3^{2/3}$.

  As discussed in \cite{MR2630053}, in order to see such times, the configuration must ``change rapidly'' so that it has ``many chances'' to have a large component. By ``change rapidly'' we mean that the configurations must have small correlations over short time intervals. To quantify this we use a second moment method, and the key will be to estimate
  \[\int_0^1\int_0^1 \P(|\C_u(s)|>An^{2/3}, \,|\C_v(t)|>An^{2/3}) \,{\rm d}t\,{\rm d}s\]
  where $A = \beta \log^{1/3} n$ and $\C_v(t)$ is the connected component containing vertex $v$ at time $t$.

  We will need different methods for estimating $\P(\C_u(0)|>An^{2/3}, \,|\C_v(t)|>An^{2/3})$, roughly depending on whether $|t-s|$ is less than or greater than $n^{-2/9}$. For small values of $|t-s|$ we will use a counting argument. For larger values of $|t-s|$ the correlations become harder to control and we will need to use tools from discrete Fourier analysis. A very interesting theory of noise sensitivity has been developed around this concept when $\P$ is a uniform product measure, i.e.~when the probability that each edge (or vertex) is present is $1/2$: see~\cite{garban_steif_noise}. Since our measure $\P$ is highly non-symmetric, we must redevelop some of the noise sensitivity tools in our non-standard setting. Even then there are complications and some twists on the theory are needed, which may be of interest in their own right.

  The basic idea is to use the notion of randomized algorithms. We aim to design an algorithm which examines some of the edges $e\in E$ (i.e.~looks at whether they are present or not), and decides whether or not there is a large component. If for any fixed $e$, the probability that the algorithm checks $e$ is small, then the Fourier coefficients that we are interested in must also be small. This result is known in the uniform case \cite{MR2630053}, and the proof carries over to non-uniform $\P$.
  The major complication here is that we are not able to construct an algorithm with the desired properties, essentially because of the lack of geometry in the graph. To check whether a particular vertex $v$ is in a large component, we need to examine almost all the edges emanating from $v$. For each $v$, we are therefore forced to consider two classes of edges. For those edges $e$ that do not have an endpoint at $v$, we can use a well-known exploration algorithm and use the lack of geometry to our advantage to bound the probability that $e$ is examined. For the edges that do have an endpoint at $v$, we use a completely different method inspired by the \emph{spectral sample} introduced in \cite{MR1813223}. We bound the relevant Fourier coefficients by looking at the probability that the edge $e$ is pivotal, i.e.~that there is a large component when $e$ is present and not if $e$ is absent.

  \section{Fourier analysis of Boolean functions}\label{sec:fourier}
  In this section we give general results on the Fourier analysis of Boolean functions. Several of the results presented here are known in the case when $\P$ is a uniform product measure; see for example~\cite{garban_steif_noise}. We also note that Talagrand~\cite{MR1303654} developed hypercontractivity results in the case we are considering, where $\P$ is a homogeneous but non-uniform product measure. We repeat some of his definitions below.

  \subsection{Definitions and first results}\label{sec:defns}

  Let $E$ be a finite set and define $\Omega := \{0,1\}^E$. Let $\P=\P_p$ be a measure on $\Omega$ defined by
  \[
    \P(\omega)=p^{\#\{e:\omega(e)=1\}}\left(1-p\right)^{\#\{i:\omega(e)=0\}}.
  \]
  All of our results in this section apply to any finite set $E$ and any $p\in[0,1]$. We refer to the elements of $E$ as bits. Of course we have in our minds the application where $E$ is the edge set of the complete graph $K_n$ and $p=1/n$;  and where for $\omega \in \Omega$ we say that edge $e$ is present if and only if $\omega(e)=1$.

  For $\omega \in \Omega$ and $e \in E$ let
  \[
    r_e(\omega):=\begin{cases}
      \sqrt{\frac{1-p}{p}} & \text{ if } \omega(e)=1\\
      -\sqrt{\frac{p}{1-p}} & \text{ if } \omega(e)=0.
    \end{cases}
  \]
  For $S \subset E$ let
  \[
    \chi_S(\omega)= \prod_{e \in S} r_e(\omega)
  \]
  where we set $\chi_\emptyset\equiv 1$. Then for any function $f:\Omega\to\R$ and $S\subset E$ we define
  \[
    \hat f(S)=\E[f\chi_S],
  \]
  and call $\hat f(S)$, for $S\subset E$, the \emph{Fourier coefficients} of $f$.

  It is easy to check that $\{\chi_S: S\subset E\}$ forms an orthonormal basis for $L^2(\P)$, and therefore---just as in continuous Fourier analysis---the function $\hat f$ encodes all of the information about $f$, in that $f(\omega)=\sum_{S\subset E} \hat f(S)\chi_S(\omega)$.

  One simple consequence of the definition is that $\E[f] = \hat f(\emptyset)$.
  Another useful result is \emph{Plancherel's identity}, which states that for two functions $f,g:\Omega\rightarrow \R$,
  \begin{equation}\label{eq:plancherel}
    \sum_{S} \hat f(S)\hat g(S) = \E[f g].
  \end{equation}
  This is easy to prove simply by writing out $f=\sum_{S}\hat f(S) \chi_S$ and $g=\sum_{S'}\hat g(S')\chi_{S'}$ and using orthonormality.

  Recall from the introduction that we will be interested in bounding probabilities like $\P(|\C_u(s)|>An^{2/3}, \,|\C_v(t)|>An^{2/3})$, where $\C_v(t)$ is the component containing $v$ at time $t$ in the dynamical Erd\H os-R\'enyi graph. We will therefore be appling our Fourier analysis to functions of the form $\ind_{\{\C_v(t)>An^{2/3}\}}$. The following lemma, which is already known (see~\cite[(4.2)]{garban_steif_noise} and~\cite[(2.2)]{MR1813223}), will be very useful for this purpose. Given $\omega\in\Omega$ and $\eps\in[0,1]$, let $\omega_\eps$ be the configuration obtained by rerandomizing each of the bits in $\omega$ independently with probability $\eps$. That is, for each $e\in E$,
  \[\omega_\eps(e) = \omega(e)\ind_{\{U_e>\eps\}} + \ind_{\{V_e<p\}}\ind_{\{U_e\le \eps\}}\]
  where $U_e$ and $V_e$ are independent uniform random variables on $(0,1)$.

  \begin{lemma}\label{lemma:noise_fourier}
    For any $\eps\in[0,1]$ and any $f,g:\Omega\to\R$,
    \[\E[f(\omega)g(\omega_\eps)]=\sum_{S} \hat f(S)\hat g(S) (1-\eps)^{|S|}\]
    (where the expectation $\E$ averages both over $\omega\in \Omega$ and also over the randomness in the resampling required to create $\omega_\eps$).
  \end{lemma}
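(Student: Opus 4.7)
The plan is to expand both $f$ and $g$ in the Fourier basis $\{\chi_S\}$ and then use bit-by-bit independence to reduce everything to one-bit computations. Writing $f = \sum_S \hat f(S) \chi_S$ and $g = \sum_T \hat g(T)\chi_T$ and interchanging sum and expectation, the claim reduces to showing
\[
  \E[\chi_S(\omega)\chi_T(\omega_\eps)] = (1-\eps)^{|S|} \ind_{\{S=T\}}
\]
for every $S,T\subset E$. Since $\chi_S(\omega)\chi_T(\omega_\eps) = \prod_{e\in S\cup T} r_e(\omega)^{\ind_{\{e\in S\}}} r_e(\omega_\eps)^{\ind_{\{e\in T\}}}$ and different bits of $\omega$ (and hence of $\omega_\eps$) are independent, the expectation factorises over $e\in S\cup T$.

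Next I would carry out the per-bit computation. Using the definition of $r_e$ one checks directly that $\E[r_e(\omega)] = 0$ and $\E[r_e(\omega)^2] = 1$ (both follow from $\P(\omega(e)=1)=p$). For $e\in S\triangle T$ the corresponding factor is $\E[r_e(\omega)]=0$ or $\E[r_e(\omega_\eps)]=0$ (since $\omega_\eps(e)$ has the same marginal as $\omega(e)$), so any $S\neq T$ contributes zero. For $e\in S\cap T$ the factor is $\E[r_e(\omega)r_e(\omega_\eps)]$; conditioning on whether bit $e$ was resampled (probability $\eps$, in which case the two factors become independent and each has mean $0$) or not (probability $1-\eps$, in which case the factor is $\E[r_e(\omega)^2]=1$), this equals $(1-\eps)$. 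Multiplying the $|S|$ identical factors gives $(1-\eps)^{|S|}$.

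Finally I would assemble the pieces: substituting back into the Fourier expansion,
\[
  \E[f(\omega)g(\omega_\eps)] = \sum_{S,T} \hat f(S)\hat g(T)\,\E[\chi_S(\omega)\chi_T(\omega_\eps)] = \sum_S \hat f(S)\hat g(S)(1-\eps)^{|S|},
\]
which is the desired identity. There is really no substantial obstacle here; the only point deserving care is the conditional argument for $e\in S\cap T$, where one must remember that when the bit is resampled the new value of $\omega_\eps(e)$ is drawn independently of $\omega(e)$ but from the \emph{same} Bernoulli$(p)$ law, so $r_e(\omega_\eps)$ still has mean zero conditional on $\omega$. Everything else is bookkeeping via orthogonality of $\{\chi_S\}$, exactly as in the derivation of Plancherel's identity in~\eqref{eq:plancherel}.
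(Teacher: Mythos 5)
Your proposal is correct and follows essentially the same route as the paper: expand $f$ and $g$ in the basis $\{\chi_S\}$, reduce to $\E[\chi_S(\omega)\chi_T(\omega_\eps)]$, and evaluate it bitwise, with the per-bit conditioning on resampling giving the factor $(1-\eps)$. The only difference is that you spell out the one-bit computations that the paper leaves as ``easy to check''.
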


  \begin{proof}
    Note that
    \begin{align*}
    \E[f(\omega)g(\omega_\eps)] &= \E\Big[\sum_S \hat f(S)\chi_S(\omega) \sum_{S'}\hat g(S')\chi_{S'}(\omega_\eps)\Big] \\
                                &= \sum_{S,S'}\hat f(S)\hat g(S') \E[\chi_S(\omega)\chi_{S'}(\omega_\eps)].
  \end{align*}
    It is easy to check that if $S\neq S'$ then $\E[\chi_S(\omega)\chi_{S'}(\omega_\eps)]=0$, and on the other hand that
    \[\E[\chi_S(\omega)\chi_S(\omega_\eps)] = \prod_{e\in S} \E[r_e(\omega)r_e(\omega_\eps)] = (1-\eps)^{|S|}.\qedhere\]
  \end{proof}
  
  \subsection{Noise sensitivity}
  
  At this point we veer from our main path for a while to state a result about the noise sensitivity of component sizes in Erd\H{o}s-R\'enyi graphs. Following the notation from Section \ref{sec:defns}, suppose that we have a sequence of functions $F_n:\Omega_n\to\R$, $n\ge 1$, where $\Omega_n = \{0,1\}^{E_n}$. Recall that for $\omega\in\Omega_n$ and $\eps\in[0,1]$, we let $\omega_\eps$ be the configuration obtained by rerandomizing each of the bits in $\omega$ independently with probability $\eps$.
  
  We say that the sequence $(F_n)_{n\ge 1}$ is \emph{noise sensitive} if
  \[\E[F_n(\omega)F_n(\omega_\eps)] - \E[F_n(\omega)]^2 \to 0 \hspace{4mm} \hbox{as $n\to\infty$.}\]
  For a sequence $(\eps_n)_{n\ge1}$, we say that $F_n$ is \emph{quantitatively noise sensitive with scaling $\eps_n$} if
  \[\E[F_n(\omega)F_n(\omega_{\eps_n})] - \E[F_n(\omega)]^2 \to 0 \hspace{4mm} \hbox{as $n\to\infty$.}\]
  
   \begin{proposition}\label{prop:ns}
     For any fixed $a\in(0,\infty)$ and any $(\eps_n)_{n\ge 1}$ such that $\lim_{n\to \infty} n^{1/6}\eps_n = \infty$, the sequence of functions $F_n = \ind_{\{|L_n|\ge an^{2/3}\}}$ is quantitatively noise sensitive with scaling $\eps_n$.
   \end{proposition}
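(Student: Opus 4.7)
The plan is to apply Lemma~\ref{lemma:noise_fourier} with $f=g=F_n$, giving
\[
\E[F_n(\omega)F_n(\omega_{\eps_n})]-\E[F_n]^2=\sum_{\emptyset\neq S\subset E_n}\hat F_n(S)^2(1-\eps_n)^{|S|},
\]
and to split this sum at a level $K_n$ chosen so that $\eps_n K_n\to\infty$. Using \eqref{eq:plancherel} together with $F_n\in\{0,1\}$, the tail $|S|>K_n$ contributes at most $(1-\eps_n)^{K_n}\le e^{-\eps_n K_n}=o(1)$. Everything reduces to showing that the low-frequency Fourier mass $\sum_{0<|S|\le K_n}\hat F_n(S)^2$ tends to zero.

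For the low-frequency mass I would adapt the Schramm--Steif randomised-algorithm bound~\cite{MR2630053} to the biased product measure $\P_p$: since $\{\chi_S:S\subset E_n\}$ is orthonormal in $L^2(\P_p)$, their argument goes through essentially verbatim and yields $\sum_{|S|=k}\hat F_n(S)^2\le k\delta_n\E[F_n^2]\le k\delta_n$ whenever $F_n$ is exactly computed by a randomised algorithm whose revealment --- the maximum over $e\in E_n$ of the probability that the algorithm queries edge $e$ --- is at most $\delta_n$. Summing over $0<|S|\le K_n$ yields a bound of $\delta_n K_n^2$. Under the hypothesis $n^{1/6}\eps_n\to\infty$ one has $\eps_n^2\gg n^{-1/3}$, so choosing $K_n=h(n)/\eps_n$ with $h(n)\to\infty$ very slowly, it suffices to find an algorithm with revealment of order $n^{-1/3}$ up to polylogarithmic factors.

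The natural candidate is the following exploration: pick a uniformly random permutation $\pi$ of $\{1,\dots,n\}$, perform BFS starting from $\pi(1)$, then from the first vertex of $\pi$ not yet discovered, and so on, outputting $1$ as soon as a component of size $\ge an^{2/3}$ is found. Heuristically, by exchangeability and the fact that the typical component in ER$(n,1/n)$ has size $\Theta(n^{1/3})$, each BFS touches a fixed edge with probability $O(n^{-2/3})$, while $O(n^{1/3})$ random starts are enough (in expectation) to land in a component of size $\ge an^{2/3}$ when one exists, giving revealment $O(n^{-1/3})$.

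The main obstacle is that on the event $F_n=0$, which has positive limiting probability by Aldous~\cite{aldous_mult_coal}, the naive algorithm runs to completion and queries every edge, so its revealment is bounded below by $\P(F_n=0)$ and does not tend to zero. I would repair this by truncating the exploration at a budget of $B_n=Cn^{2/3}\log n$ explored vertices and outputting $0$ when the budget is exhausted; by exchangeability the truncated algorithm exactly computes a Boolean function $\tilde F_n$ whose revealment is at most $2B_n/n=O(n^{-1/3}\log n)$. Component-size estimates for ER$(n,1/n)$ (such as those in~\cite{bollobas_book}) then give $\|F_n-\tilde F_n\|_2^2=\P(F_n\neq\tilde F_n)=o(1)$, and this $L^2$ error induces only an $o(1)$ correction in the relevant Fourier sums via Plancherel and Cauchy--Schwarz, closing the argument.
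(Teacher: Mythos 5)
Your opening reduction (Lemma \ref{lemma:noise_fourier} with $f=g=F_n$, splitting the spectrum at a level $K_n\approx 1/\eps_n$ and using \eqref{eq:plancherel} for the tail) is fine, but the core step fails. Theorem \ref{thm:randalg} (and the Schramm--Steif theorem it generalises) requires an algorithm that \emph{determines} the value of a fixed function of $\omega$: for every configuration and every realisation of the auxiliary randomness, the revealed bits must pin down $f(\omega)$. Your budget-truncated exploration does not do this: on a configuration that has a component of size $\ge an^{2/3}$ together with many small components, whether the output is $1$ or $0$ depends on the order in which the random permutation happens to visit components before the budget runs out. So there is no Boolean function $\tilde F_n(\omega)$ that the truncated algorithm ``exactly computes''; exchangeability equalises the marginal revealment probabilities, but it cannot make the output a function of $\omega$ alone, and the revealment theorem has no version here for algorithms that only compute the function with high probability. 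Worse, the obstruction is not specific to your algorithm: if an algorithm genuinely determines $F_n$ and outputs $0$, then the graph of \emph{unrevealed} pairs must have all components of size $<an^{2/3}$ (otherwise switching all unrevealed pairs on would create a large component), which forces all but $O(n^{5/3})$ of the $\sim n^2/2$ pairs to be revealed. Since $\P(F_n=0)$ is bounded away from $0$, every exact algorithm for $F_n$ has revealment bounded below by a positive constant, so the plan ``apply the revealment bound directly to $F_n$'' cannot be repaired by a cleverer exploration. (A secondary quantitative point: even granting your bound, the $\log n$ in the revealment $O(n^{-1/3}\log n)$ already loses part of the claimed range $n^{1/6}\eps_n\to\infty$, e.g.\ $\eps_n=n^{-1/6}\log\log n$, though that alone could be patched by a slowly growing budget.)

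This is precisely the difficulty the paper is built to avoid: it dominates $F_n$ by the counting function $G_n=(an^{2/3})^{-1}\sum_v\ind_{\{|\C_v|\ge an^{2/3}\}}$ and uses the FKG comparison (Lemma \ref{lemma:FKGcon}) to reduce noise sensitivity of $F_n$ to decorrelation of the single-vertex functions $f_1,f_2$; and since even $f_1$ admits no low-revealment exact algorithm (all edges at vertex $1$ must essentially be examined), it splits the Fourier sum according to whether $S$ meets $\mathcal U_1\cup\mathcal U_2$, bounding the intersecting part by pivotality (Lemma \ref{lemma:fourier_to_pivotal} together with Lemmas \ref{lemma:Cuandv} and \ref{lemma:Cuvw}) and only the non-intersecting part by Theorem \ref{thm:randalg} with $U=\mathcal U_1$ and the breadth-first search (Lemmas \ref{lemma:revealment2} and \ref{lemma:U1U2empty2}). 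Some reduction of this kind, from the global event to single-vertex events, appears unavoidable, and it is the missing idea in your proposal.
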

  
   Of course if $F_n\to 0$ with high probability (or if $F_n\to 1$ with high probability) then $F_n$ is trivially noise sensitive. It is well known---see \cite{aldous_mult_coal}---that $F_n$ as in Proposition \ref{prop:ns} is non-trivial in that sense, i.e.~$\lim_{n\to\infty}\P(F_n=0)\in(0,1)$.
   
  We believe that in fact the given sequence $F_n$ is quantitatively noise sensitive with scaling $\eps_n$ whenever $n^{1/3}\eps_n\to\infty$, and that this is the best possible such scaling. A simple argument, similar to the union bound in Section \ref{sec:large_beta}, shows that the functions $F_n(\omega)$ and $F_n(\omega_{\eps_n})$ defined in Proposition \ref{prop:ns} do not decorrelate when $\eps_n$ is of order $n^{-1/3}$. More precisely, in $\omega$, the total number of vertices in components of size at least $an^{2/3}/2$ is $O(n^{2/3})$ with high probability, so there are $O(n^{4/3})$ edges that would create a new component of size $n^{2/3}$ simply by being switched on, and the probability that any of these edges is switched on under $\omega_{n^{-1/3}}$ is bounded away from $1$. Similarly, the total number of edges that would break a component of size at least $an^{2/3}$ into two pieces of size $<an^{2/3}$ if switched off is $O(n^{1/3})$ with high probability, and the probability that any of these edges is switched off under $\omega_{n^{-1/3}}$ is again bounded away from $1$. This argument provides a lower bound for the noise sensitivity threshold, but it also makes a persuasive case for $n^{-1/3}$ being the correct threshold, since rerandomising significantly more edges would change the status of a large number of \emph{pivotal} edges (see Section \ref{sec:piv}).
  
  Lubetzky and Steif \cite{MR3433581} showed that $n^{-1/3}$ is the correct noise sensitivity threshold when $F_n$ is instead the indicator that the critical Erd\H{o}s-R\'enyi graph contains a cycle whose size is of order $n^{1/3}$. Roughly speaking, a cycle of order $n^{1/3}$ entails a component of order $n^{2/3}$, though it is possible to have components of order $n^{2/3}$ without having cycles of order $n^{1/3}$. Further, Rossignol \cite{rossignol} shows that the system of large components has a well-behaved scaling limit when edges are resampled at rate $n^{-1/3}$, which again suggests that faster rerandomization would break the correlation structure. Finally, this sensitivity threshold would also coincide with our conjecture for the existence of exceptional times for any $\beta<2/3^{1/3}$.
  
  A proof of Proposition \ref{prop:ns} will follow almost as a byproduct of our proof of Theorem \ref{thm:noise}. We carry out the details in Section \ref{sec:ns}.

  \subsection{Randomized algorithms and revealment}\label{sec:randalg}

  Evaluating Fourier coefficients directly is often quite difficult and instead we concentrate on bounding sums such as the one on the right-hand side of Lemma \ref{lemma:noise_fourier}. One approach that has proven fruitful in the past is to introduce a \emph{randomized revealment algorithm} that attempts to decide the value of the function $f$ by revealing $\omega(e)$ only for relatively few of the possible bits $e\in E$. If for any fixed $e$, the probability that the algorithm reveals $\omega(e)$ is small, then it turns out that the sum of the Fourier coefficients must be small \cite[Theorem 1.8]{MR2630053}. Our main result in this section is a generalization of \cite[Theorem 1.8]{MR2630053}.

  Let $f:\Omega=\{0,1\}^E \rightarrow \R$. A revealment algorithm, $A$, for $f$ is a sequence of bits $e_1,e_2,\ldots, e_T\in E$, chosen one by one, with the choice of $e_k$ possibly depending on the values of $\omega(e_1),\ldots,\omega(e_{k-1})$, and such that knowledge of $\omega(e_1),\ldots,\omega(e_T)$ determines the value of $f(w)$. A \emph{randomized revealment algorithm} is a revealment algorithm that is also allowed to use auxiliary randomness in making choices. Given such an algorithm $A$, let $J$ be the set of bits revealed by $A$.

  For $U\subset E$, define the revealment of the algorithm $A$ on $U$ by
  \[
    \mathcal R_U= \mathcal R_U(f,A) :=\max_{e \in U^c}\P(e \in J).
  \]
  Our main result in this section is the following generalization of \cite[Theorem 1.8]{MR2630053}.

  \begin{theorem}\label{thm:randalg}
    Let $A$ be an algorithm determining $f:\Omega\to\R$ and let $U\subset E$. Then for any $k \in \N$,
    \[
      \sum_{\substack{|S|=k,\\ S\cap U=\emptyset}}\hat f(S)^2 \leq \mathcal R_U(f,A) \E[f(\omega)^2] k.
    \]
  \end{theorem}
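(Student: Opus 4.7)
The plan is to adapt Schramm and Steif's argument from \cite[Theorem~1.8]{MR2630053} to the non-uniform product measure $\P_p$ and to the restricted sum over $S \cap U = \emptyset$; the restriction is precisely what lets us use $\mathcal{R}_U$ (the revealment only outside $U$) rather than the full revealment. The first thing to establish is the algorithm--Fourier identity: for every nonempty $S \subset E$,
\[
\hat f(S) \;=\; \E\bigl[f\chi_S \ind_{\{S\cap J\ne\emptyset\}}\bigr] \;=\; \E\bigl[f\chi_S \ind_{\{S\subset J\}}\bigr].
\]
For the first equality, pick any $i \in S$; the event $\{i\in J\}$ is $\sigma(\omega|_{E\setminus\{i\}})$-measurable (the algorithm queries $i$ based only on previously-revealed bits, all distinct from $i$) and on $\{i\notin J\}$ the output $f$ does not depend on $\omega(i)$, so conditioning on $\omega|_{E\setminus\{i\}}$ pulls out $\E[r_i]=0$. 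The second, stronger, equality follows by decomposing over the random trajectory of $A$: unexamined coordinates of $\omega$ remain independent Bernoulli conditional on the trajectory. Applying this to $f\equiv 1$ yields the crucial corollary $\E[\chi_U \ind_{\{U\subset J\}}]=0$ for every nonempty $U$.

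Writing $\sum^{\ast}$ for the sum over $\{S : |S|=k,\ S\cap U=\emptyset\}$ and setting $M := \sum^{\ast}\hat f(S)\chi_S \ind_{\{S\cap J \ne\emptyset\}}$, the identity above gives $\sum^{\ast}\hat f(S)^2 = \E[f M] \le \|f\|_2\,\|M\|_2$ by Cauchy--Schwarz, so it suffices to show $\|M\|_2^2 \le k\,\mathcal{R}_U \sum^{\ast}\hat f(S)^2$. The diagonal terms in $\|M\|_2^2$ are straightforward: using $\E[r_e^2]=1$ and the conditional independence of unexamined bits from the trajectory, $\E[\chi_S^2 \ind_{\{S\cap J=\emptyset\}}]=\P(S\cap J=\emptyset)$, hence $\E[\chi_S^2 \ind_{\{S\cap J\ne\emptyset\}}]=\P(S\cap J\ne\emptyset)\le k\,\mathcal{R}_U$ by a union bound (the last inequality uses $S\subset U^c$).

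The hard part will be showing that the off-diagonal ($S \ne S'$) contributions to $\|M\|_2^2$ vanish. For $p=\tfrac12$ this is immediate from $\chi_S\chi_{S'} = \chi_{S\triangle S'}$, but with $p\ne\tfrac12$ the product carries extra $r_e^2$ factors. My plan is to use the pointwise identity $r_e^2 = 1 + c_p r_e$ with $c_p=(1-2p)/\sqrt{p(1-p)}$ to expand
\[
\chi_S\chi_{S'} \;=\; \sum_{T\subset S\cap S'} c_p^{|T|}\,\chi_{(S\triangle S')\cup T},
\]
and then argue, by conditioning on the trajectory and noting that any unexamined bit in $S\triangle S'$ forces the expectation to zero, that the indicator $\ind_{\{S\cap J\ne\emptyset,\,S'\cap J\ne\emptyset\}}$ can be replaced by $\ind_{\{U\subset J\}}$ with $U=(S\triangle S')\cup T$ inside each term (using that $S\ne S'$ and $|S|=|S'|$ force both $S \setminus S'$ and $S' \setminus S$ to be nonempty, so $U\subset J$ automatically gives both events). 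Each resulting term is then $\E[\chi_U \ind_{\{U\subset J\}}]$ with $U \ne \emptyset$ and so equals zero by the corollary from the first paragraph. Plugging $\|M\|_2^2 \le k\,\mathcal{R}_U\sum^{\ast}\hat f(S)^2$ back into the Cauchy--Schwarz estimate and rearranging yields the desired inequality $\sum^{\ast}\hat f(S)^2 \le k\,\mathcal{R}_U\,\E[f^2]$.
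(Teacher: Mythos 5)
Your proposal is correct, and it reaches the bound by a genuinely different route from the paper. The paper follows Schramm--Steif: it projects onto the relevant coefficients via $h=\sum_{|S|=k,\,S\cap U=\emptyset}\hat f(S)\chi_S$, identifies $\Et[h\mid\A]$ with $\widehat{h_J}(\emptyset)$ (Lemma \ref{lem:hgivenA}), computes the Fourier coefficients of the resampled function $h_J$ (Lemma \ref{lem:hJfourier}), and uses Plancherel for $h_J$ to bound $\Et[\widehat{h_J}(\emptyset)^2]$ (Lemma \ref{lem:htosum}), before the same Cauchy--Schwarz and union bound that you use. You instead start from the spectral identity $\hat f(S)=\E[f\chi_S\ind_{\{S\cap J\neq\emptyset\}}]$ and apply Cauchy--Schwarz against $M=\sum\hat f(S)\chi_S\ind_{\{S\cap J\neq\emptyset\}}$ (which is not $\Et[h\mid\A]$, whose truncation is $\ind_{\{S\subset J\}}$), computing $\E[M^2]$ directly; the off-diagonal cancellation you single out as the hard part does hold, and your pointwise identity $r_e^2=1+c_p r_e$ is exactly what is needed because $\chi_S\chi_{S'}=\chi_{S\triangle S'}$ fails for $p\neq 1/2$. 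Two points to tighten in writing it up: all measurability statements should be with respect to the revealed information together with the algorithm's auxiliary randomness $\tau$; and in the cross-term step the factor killing the expectation is any unrevealed bit of the whole set $W=(S\triangle S')\cup T$ (not only of $S\triangle S'$), which is precisely what lets you insert $\ind_{\{W\subset J\}}$ --- indeed $\E[\chi_W\ind_{\{W\subset J\}}]=\E\big[\E[\chi_W\mid\A]\big]=\E[\chi_W]=0$ for every nonempty $W$, giving your corollary in one line. What your route buys is an exact identity $\E[M^2]=\sum\hat f(S)^2\,\P(S\cap J\neq\emptyset)$ (the paper only needs, and only gets, an inequality) and no need for the $h_J$/conditional-expectation machinery; what the paper's route buys is that Plancherel absorbs all cross terms automatically, so no algebraic manipulation of $r_e^2$ is required in the non-uniform setting.
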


  The result in \cite{MR2630053}, besides being stated for the uniform measure (i.e.~$p=1/2$), only included the case $U=\emptyset$. The reason that we need a generalization involves the geometry of the Erd\H{o}s-R\'enyi graph. As far as we can tell, any algorithm to check whether there is an unusually large component must reveal almost all of the edges emanating from many of the vertices; similarly, any algorithm to check whether a particular vertex $v$ is in an unusually large component must reveal almost all of the edges with an endpoint at $v$.

  To get around this problem we fix a vertex $v$ and separate subsets $S$ of edges into those which contain an edge with an endpoint at $v$, and those which do not. We then use Theorem \ref{thm:randalg} to bound the Fourier coefficients of the latter sets, and take a different approach to the former. This different approach was inspired by the spectral sample introduced in \cite{MR1813223}, and will be carried out in Section \ref{sec:piv}.
  
  Schramm and Steif \cite{MR2630053} noted that it may be possible to improve their Theorem 1.8 for large $k$, and we believe similarly that our Theorem \ref{thm:randalg} may not be optimal. They suggest that the sum over $|S|=k$ might be changed to a sum over $|S|\le k$ with no change on the right-hand side, and such an improvement would allow us to give improved versions of Theorem \ref{thm:noise} and Proposition \ref{prop:ns} that are essentially best possible: convergence in probability of $\sup_{t\in[0,1]}|L_n(t)|/(n^{2/3}\log^{1/3}n)$ to $2/3^{1/3}$, and quantitative noise sensitivity for any $\eps_n\gg n^{-1/3}$.

  For now we aim to prove Theorem \ref{thm:randalg}. Our strategy is very much based on the proof in \cite{MR2630053}.

  Let $\tau\in\mathcal T$ represent the auxiliary randomness used by the algorithm, and let $\Pt$ be the canonical probability measure on the extended space $\Omega\times\mathcal T$. Let $\A$ be the smallest $\sigma$-algebra such that $J$ and $\{\omega(e) : e\in J\}$ are measurable. Note that since $A$ determines the value of $f$, and $\A$ contains all the information revealed by $A$, $f$ is $\A$-measurable.

  For a configuration $\omega'\in\Omega$ define the configuration $\omega'_{J(\omega,\tau)}$ by setting
\[\omega'_{J(\omega,\tau)}(e):=\begin{cases}
      \omega(e) & \text{ if } e \in J(\omega,\tau)\\
      \omega'(e) & \text{ if }e \notin J(\omega,\tau).
  \end{cases}\]
  Next, for any function $h:\Omega \rightarrow \R$ and $(\omega,\tau)\in \Omega\times\mathcal T$, define $h_{\Js}$ by
  \[\begin{aligned} h_\Js: \Omega &\to \R\\
                             \omega' &\mapsto h(\omega'_{\Js}).\end{aligned}\]
    We now want to be able to take expectations over $\omega'\in\Omega$, using our usual probability measure under which each bit of $\omega'$ is $1$ with probability $p$ and $0$ with probability $1-p$, while keeping $\omega$ and $\tau$ fixed. We write $\Ps$ to emphasise that $\omega$ and $\tau$ are fixed. The notation $\widehat{h_{J}}(S)$ will mean the Fourier coefficient with respect to $\Ps$, i.e.~$\Es[h_\Js(\omega')\chi_S(\omega')]$. The set $J$ will always be a function of $\omega$ and $\tau$, $J=J(\omega,\tau)$ (and not $\omega'$), though we will omit this from the notation for the sake of readability.

  We start with a general lemma about any such function $h$, before choosing a particular $h$. We stress that these proofs are almost identical to those in \cite{MR2630053}, but fleshed out and adapted to our more general situation.

  \begin{lemma}\label{lem:hgivenA}
    For any $S\subset E$ and any function $h:\Omega\to\R$,
    \[\Et[h(\omega)|\A] = \widehat{h_{J}}(\emptyset).\]
  \end{lemma}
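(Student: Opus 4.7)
The plan is to unpack both sides and show that they coincide by identifying the conditional law of $\omega$ given $\A$ with the law of a suitably resampled configuration. First I would observe that, from the definition of Fourier coefficients with respect to $\Ps$, we have
\[
\widehat{h_J}(\emptyset) \;=\; \Es[h_{\Js}(\omega')\chi_\emptyset(\omega')] \;=\; \Es[h(\omega'_{\Js})],
\]
where under $\Ps$ the bits $\omega'(e)$ are i.i.d.\ Bernoulli$(p)$ and $\omega,\tau$ (hence $J$ and $\{\omega(e):e\in J\}$) are held fixed. Since $\omega'_{\Js}$ agrees with $\omega$ on $J$ and with $\omega'$ off $J$, this right-hand side is the expectation of $h$ applied to a configuration that keeps the revealed bits of $\omega$ and samples the unrevealed bits afresh from the product measure.

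Next I would show that this is exactly $\Et[h(\omega)\mid\A]$. The $\sigma$-algebra $\A$ records $J(\omega,\tau)$ together with $\{\omega(e):e\in J\}$, and nothing else. The key probabilistic input is that, conditional on $\A$, the remaining bits $\{\omega(e):e\notin J\}$ are independent Bernoulli$(p)$ random variables, independent of $\A$. Granting this, the conditional law of $\omega$ given $\A$ is exactly the law of $\omega'_{\Js}$ under $\Ps$, so
\[
\Et[h(\omega)\mid\A] \;=\; \Es[h(\omega'_{\Js})] \;=\; \widehat{h_J}(\emptyset).
\]

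The only real point to justify is the conditional independence claim, and I would prove it by induction on the step $k$ of the algorithm. At step $k$, the identity of the next queried bit $e_k$ is a function of $\omega(e_1),\dots,\omega(e_{k-1})$ and the auxiliary randomness $\tau$; conditioning on the observed history together with $\tau$ keeps the unqueried bits i.i.d.\ Bernoulli$(p)$ by the product structure of $\P$ and the independence of $\tau$ from $\omega$. Applying this at the (possibly random) stopping step $T$ at which the algorithm terminates gives the required conditional distribution. Averaging over $\tau$ removes the dependence on the auxiliary randomness, leaving the statement conditional on $\A$ alone.

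The main (and essentially only) obstacle is to state this conditional-independence fact carefully enough to handle the adaptive, randomized nature of $A$; once that is in place the lemma reduces to matching two descriptions of the same measure. This is routine but worth spelling out, because the rest of Section~\ref{sec:randalg} leans on it via the identification $\Et[h(\omega)\mid\A]=\widehat{h_J}(\emptyset)$ being used inside Plancherel-style computations.
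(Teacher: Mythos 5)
Your proposal is correct and takes essentially the same route as the paper: the paper's proof also unpacks $\widehat{h_J}(\emptyset)=\Es[h(\omega'_{\Js})]$ as an average of $h$ over configurations that agree with $\omega$ on $J$ and are product-Bernoulli$(p)$ off $J$, and then identifies that average with $\Et[h(\omega)\mid\A]$. The only difference is that the paper leaves the conditional-independence of the unrevealed bits implicit (asserting the final identification directly from the explicit sum), whereas you sketch its justification by induction along the algorithm's steps.
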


  \begin{proof}
    Setting $\omega^S$ to be $1$ on $S$ and $0$ off $S$, we have
    \begin{align*}
      \widehat{h_{J}}(\emptyset) &= \Es[h_{J}(\omega')]\\
      &= \sum_{S\subset E} h_{J}(\omega^S) p^{|S|}(1-p)^{|E\setminus S|}\\
      &= \sum_{S\subset E} h(\omega^S_J) p^{|S|}(1-p)^{|E\setminus S|}\\
      &= \sum_{S\subset J^c} h(\omega^{S\cup J'}) p^{|S|}(1-p)^{|J^c\setminus S|}
    \end{align*}
    where $J' = J'(\omega,\tau)=\{e\in J(\omega,\tau) : \omega(e)=1\}$. But this last quantity is exactly $\Et[h(\omega)|\A]$.
  \end{proof}

  We now fix a function $h$ by setting
  \begin{equation}\label{eq:chose_h}h(\omega) = \sum_{\substack{|S|=k,\\ S\cap U=\emptyset}} \hat f(S) \chi_S(\omega).\end{equation}

  \begin{lemma}\label{lem:hJfourier}
    Suppose $h$ is as defined in~\eqref{eq:chose_h}. Then for any $S\subset E$ with $|S|=k$,
    \[\widehat{h_{J}}(S) = \begin{cases} 0 & \text{if } S\cap J\neq\emptyset\\
\hat h(S) & \text{if } S\cap J=\emptyset.\end{cases}\]
  \end{lemma}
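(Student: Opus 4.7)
The plan is to compute $\widehat{h_J}(S) = \Es[h_J(\omega')\chi_S(\omega')]$ directly by expanding $h_J(\omega')$ and using orthonormality of the characters $\chi_T(\omega')$ under $\Ps$. Since $h$ is a linear combination of characters, we have
\[h_J(\omega') = h(\omega'_J) = \sum_{\substack{|T|=k,\\ T\cap U=\emptyset}} \hat f(T)\,\chi_T(\omega'_J),\]
and the key observation is that $\chi_T(\omega'_J)$ factors cleanly according to whether each edge in $T$ lies inside $J$ or not: by definition of $\omega'_J$,
\[\chi_T(\omega'_J) = \prod_{e\in T}r_e(\omega'_J) = \Bigl(\prod_{e\in T\cap J}r_e(\omega)\Bigr)\Bigl(\prod_{e\in T\setminus J}r_e(\omega')\Bigr).\]
The first factor is a constant under $\Ps$ (because $\omega$ and $\tau$, hence $J$, are fixed), so all the $\omega'$-randomness is carried by the second factor.

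Next, I would multiply by $\chi_S(\omega')=\prod_{e\in S}r_e(\omega')$ and take $\Ps$-expectation term by term. Under $\Ps$ the bits of $\omega'$ are independent, and by construction $\Es[r_e(\omega')]=0$ and $\Es[r_e(\omega')^2]=1$. Therefore
\[\Es\Bigl[\prod_{e\in T\setminus J}r_e(\omega')\cdot\prod_{e\in S}r_e(\omega')\Bigr] = \prod_{e\in (T\setminus J)\cap S}\Es[r_e(\omega')^2]\prod_{e\in (T\setminus J)\triangle S}\Es[r_e(\omega')],\]
which vanishes unless $T\setminus J = S$. Since $|T|=|S|=k$, the condition $T\setminus J=S$ forces $T=S$ and moreover $S\cap J=\emptyset$.

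Combining these two steps: if $S\cap J\neq\emptyset$, no index $T$ in the sum contributes, so $\widehat{h_J}(S)=0$. If $S\cap J=\emptyset$, the unique contributing index is $T=S$ (which is present in the sum precisely when $S\cap U=\emptyset$, i.e.\ precisely when $\hat h(S)=\hat f(S)$; otherwise both sides are $0$), and the corresponding constant factor $\prod_{e\in S\cap J}r_e(\omega)$ is empty and equals $1$, yielding $\widehat{h_J}(S)=\hat f(S)=\hat h(S)$.

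There is really no serious obstacle here; the only care required is the bookkeeping that $(T\setminus J)\triangle S=\emptyset$ together with $|T|=|S|=k$ does force $T=S$ rather than allowing extra edges of $T$ to hide inside $J$. This is why the assumption $|S|=k$ in the statement is used in an essential way, and it is the one step I would write out explicitly to make the argument airtight.
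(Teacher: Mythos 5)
Your proof is correct and rests on the same underlying mechanism as the paper's: orthogonality of the $r_e(\omega')$ under $\Ps$ (mean zero, second moment one), which the paper invokes via bit-flipping arguments and you via the explicit factorization $\chi_T(\omega'_J)=\prod_{e\in T\cap J}r_e(\omega)\prod_{e\in T\setminus J}r_e(\omega')$. Your single condition $T\setminus J=S$, together with the observation that $|T|=|S|=k$ forces $T=S$ and $S\cap J=\emptyset$, neatly unifies the paper's two separate case analyses ($S'\neq S$, then $S\cap J\neq\emptyset$), but it is essentially the same computation.
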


  \begin{proof}
    Note that $h_{J}(\omega') = h(\omega'_J) = \sum_S \hat h(S)\chi_S(\omega'_J)$. Therefore
    \[\widehat{h_J}(S) = \Es[h_J(\omega')\chi_S(\omega')] = \sum_{|S'|=k} \hat h(S') \Es[\chi_{S'}(\omega'_J)\chi_S(\omega')].\]
    If $S'\neq S$, then (since $S'$ and $S$ have the same size) we may take $e\in S\setminus S'$; changing the value of the bit $e$ changes $\chi_S$ but not $\chi_{S'}$, so an easy calculation shows that in this case $\Es[\chi_{S'}(\omega'_J)\chi_S(\omega')]=0$. Thus
    \[\widehat{h_J}(S) = \Es[h_J(\omega')\chi_S(\omega')] = \hat h(S) \Es[\chi_{S}(\omega'_J)\chi_S(\omega')].\]

    Now if $S\cap J\neq\emptyset$, then we may take $e\in S\cap J$; since $e\in J$, the value of $\omega'_J$ remains constant when we change $\omega'(e)$. On the other hand, since $e\in S$, the value of $\chi_S(\omega')$ changes when we change $\omega'(e)$. Therefore another easy calculation gives that in this case also $\Es[\chi_{S}(\omega'_J)\chi_S(\omega')]=0$, and thus $\widehat{h_J}(S)=0$ when $S\cap J\neq\emptyset$.

    Finally, if $S\cap J=\emptyset$, then $\chi_{S}(\omega'_J) = \chi_S(\omega')$, so in this case by orthonormality we have $\Es[\chi_{S}(\omega'_J)\chi_S(\omega')]=1$ and $\widehat{h_J}(S)=\hat h(S)$. This completes the proof.
  \end{proof}

  \begin{lemma}\label{lem:htosum}
    For $h$ defined in~\eqref{eq:chose_h} we have that
    \[\Et[\widehat{h_{J}}(\emptyset)^2] \le \sum_{\substack{|S|=k\\ S\cap U=\emptyset}} \hat h(S)^2 \Pt(J\cap S\neq \emptyset).\]
  \end{lemma}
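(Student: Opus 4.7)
The plan is to apply Parseval's identity to $h_J$ viewed as a function of $\omega'$ under $\Ps$, retain only the $\emptyset$ and level-$k$ Fourier modes (which Lemma~\ref{lem:hJfourier} already controls in terms of $\hat h$), and then average over $(\omega,\tau)$ using a tower-property identity that equates $\Es[h_J(\omega')^2]$ with $\Et[h(\omega)^2\mid\A]$.

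Concretely: for each fixed $(\omega,\tau)$, Parseval under $\Ps$ gives
\[
\Es[h_J(\omega')^2] = \sum_{S'\subset E}\widehat{h_J}(S')^2 \;\ge\; \widehat{h_J}(\emptyset)^2 + \sum_{|S'|=k}\widehat{h_J}(S')^2,
\]
since every squared coefficient is non-negative. By Lemma~\ref{lem:hJfourier}, the level-$k$ coefficients satisfy $\widehat{h_J}(S')^2 = \hat h(S')^2\,\ind_{S'\cap J=\emptyset}$, so rearranging yields
\[
\widehat{h_J}(\emptyset)^2 \;\le\; \Es[h_J(\omega')^2] - \sum_{|S'|=k} \hat h(S')^2\,\ind_{S'\cap J=\emptyset}.
\]

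Next I would take $\Et$ on both sides. The crucial observation is that given $\A$ the bits $\omega(e)$ for $e\notin J$ are independent Bernoulli$(p)$'s, exactly as the bits $\omega'(e)$ on $J^c$ are under $\Ps$; this forces $\Es[h_J(\omega')^2] = \Et[h(\omega)^2\mid\A]$, and hence by the tower property together with Parseval applied to $h$ itself,
\[
\Et[\Es[h_J(\omega')^2]] = \Et[h(\omega)^2] = \sum_{\substack{|S|=k\\S\cap U=\emptyset}}\hat h(S)^2.
\]
Averaging the displayed bound for $\widehat{h_J}(\emptyset)^2$ and using $\Et[\ind_{S'\cap J=\emptyset}] = \Pt(S'\cap J=\emptyset)$ then produces
\[
\Et[\widehat{h_J}(\emptyset)^2] \;\le\; \sum_{\substack{|S|=k\\S\cap U=\emptyset}}\hat h(S)^2\bigl(1-\Pt(S\cap J=\emptyset)\bigr) \;=\; \sum_{\substack{|S|=k\\S\cap U=\emptyset}}\hat h(S)^2\,\Pt(S\cap J\neq\emptyset),
\]
which is the claim. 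The main subtlety is recognising that the \emph{right} Parseval identity to apply is the one for $h_J$ in the $\omega'$ coordinate, coupled with the identity $\Es[h_J^2]=\Et[h^2\mid\A]$ which lets us return to Parseval for $h$ after averaging; the looseness of the inequality (the gap between the actually-relevant $S\subset J$ and the looser $S\cap J\neq\emptyset$) is absorbed entirely by the Parseval terms with $0<|S'|<k$ that we discarded.
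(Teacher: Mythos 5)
Your argument is correct and follows essentially the same route as the paper: Plancherel for $h_J$ under $\Ps$, Lemma~\ref{lem:hJfourier} for the level-$k$ coefficients, and the identity $\Es[h_J(\omega')^2]=\Et[h(\omega)^2\mid\A]$ (which is precisely Lemma~\ref{lem:hgivenA} applied to $h^2$, and your conditional-law justification is the same argument), followed by the tower property and Plancherel for $h$. The only cosmetic difference is that you discard the Fourier modes with $0<|S'|<k$ or $|S'|>k$ before averaging rather than after, which changes nothing.
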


  \begin{proof}
    Using Plancherel's identity on the function $h_J$, we have
    \[\Es[h_J(\omega')^2] = \sum_S \widehat{h_{J}}(S)^2\]
    and therefore
    \begin{equation}\label{eq:hat_to_sum}
      \widehat{h_{J}}(\emptyset)^2 = \Es[h_J(\omega')^2] - \sum_{|S|>0} \widehat{h_{J}}(S)^2.
    \end{equation}
    If we let $g=h^2$, then applying Lemma \ref{lem:hgivenA} to $g$ and using Plancherel's identity we see that
    \[\Et[\Es[h_J(\omega')^2]] \hspace{-0.5mm} = \hspace{-0.5mm} \Et[\widehat{g_J}(\emptyset)] \hspace{-0.5mm} = \hspace{-0.5mm} \Et[\Et[g(\omega)|\A]] \hspace{-0.5mm} = \hspace{-0.5mm} \Et[g(\omega)] \hspace{-0.5mm} = \hspace{-0.5mm} \Et[h(\omega)^2] \hspace{-0.5mm} = \hspace{-0.5mm} \sum_S \hat h(S)^2.\]
    Therefore, taking expectations in \eqref{eq:hat_to_sum}, we get
    \[\Et[\widehat{h_{J}}(\emptyset)^2] = \sum_S \hat h(S)^2 - \sum_{|S|>0} \Et[\widehat{h_{J}}(S)^2].\]
    By Lemma \ref{lem:hJfourier}, $\widehat{h_{J}}(S)^2 = \hat h(S)^2 \ind_{\{J\cap S=\emptyset\}}$ when $|S|=k$; and the same quantity is obviously non-negative when $|S|\neq k$, so
    \[\Et[\widehat{h_{J}}(\emptyset)^2] \le \sum_S \hat h(S)^2 - \sum_{|S|=k}\hat h(S)^2 \Pt(J\cap S=\emptyset).\]
    Since $\hat h(S)=0$ unless $|S|= k$ and $S\cap U=\emptyset$, the result follows.
  \end{proof}

  We can now prove Theorem \ref{thm:randalg}.

  \begin{proof}[Proof of Theorem \ref{thm:randalg}]
    Suppose that $h$ is as in~\eqref{eq:chose_h}. We claim first that
    \begin{equation}\label{eq:htofh}
      \Et[h(\omega)^2]^2 \le \Et[f(\omega)^2]\Et[\widehat{h_{J}}(\emptyset)^2].
    \end{equation}
    To show this, note that by orthogonality,
    \begin{align*}
      \Et[h(\omega)f(\omega)] &= \Et\Big[\sum_{\substack{|S|=k;\\ S\cap U=\emptyset}} \hat f(S)\chi_S(\omega) \sum_{S'\subset E} \hat f(S') \chi_{S'}(\omega)\Big]\\
      &= \Et\Big[\sum_{\substack{|S|=k;\\ S\cap U=\emptyset}} \hat f(S)\chi_S(\omega) \sum_{\substack{|S'|=k;\\ S'\cap U=\emptyset}} \hat f(S') \chi_{S'}(\omega)\Big] = \Et[h(\omega)^2].
    \end{align*}
    On the other hand,
    \begin{align*}
      \Et[h(\omega)f(\omega)] = \Et[\Et[h(\omega)f(\omega)|\A]] &= \Et[f(\omega)\Et[h(\omega)|\A]] \\
      &\le \Et[f(\omega)^2]^{1/2} \Et[\Et[h(\omega)|\A]^2]^{1/2}
    \end{align*}
    where the second equality uses the fact that $f$ is $\A$-measurable, and the last inequality uses Cauchy-Schwartz. Putting these two expressions for $\Et[h(\omega)f(\omega)]$ together, and recalling from Lemma \ref{lem:hgivenA} that $\Et[h(\omega)|\A] = \widehat{h_{J}}(\emptyset)$, we get \eqref{eq:htofh}.

    Now, combining \eqref{eq:htofh} with Lemma \ref{lem:htosum},
    \[\Et[h(\omega)^2]^2 \le \Et[f(\omega)^2]\sum_{\substack{|S|=k,\\ S\cap U=\emptyset}} \hat h(S)^2 \Pt(J\cap S\neq\emptyset).\]
    Taking a union bound, for any $S$ with $|S|=k$ and $S\cap U=\emptyset$ we have $\Pt(J\cap S \neq\emptyset) \le k \mathcal R_U$, so
    \[\Et[h(\omega)^2]^2 \le \Et[f(\omega)^2]\sum_{\substack{|S|=k;\\ S\cap U=\emptyset}} \hat h(S)^2 k \mathcal R_U.\]
    By Plancherel's identity and the definition of $h$,
    \begin{equation}\label{eq:plan_h}
      \sum_{\substack{|S|=k;\\ S\cap U=\emptyset}} \hat h(S)^2 = \sum_S \hat h(S)^2 = \Et[h(\omega)^2],
    \end{equation}
    so
    \[\Et[h(\omega)^2]^2 \le \Et[f(\omega)^2] \Et[h(\omega)^2] k \mathcal R_U\]
    and therefore $\Et[h(\omega)^2] \le \Et[f(\omega)^2] k \mathcal R_U$. Since $\hat h(S) = \hat f(S)$ for all $S$ with $|S|=k$ and $S\cap U=\emptyset$, using \eqref{eq:plan_h} again we have
    \[\sum_{\substack{|S|=k;\\ S\cap U=\emptyset}} \hat f(S)^2 = \Et[h(\omega)^2] \le \Et[f(\omega)^2] k \mathcal R_U.\qedhere\]
  \end{proof}

  \subsection{Pivotality}\label{sec:piv}

  In Section \ref{sec:randalg} we gave a method for bounding
  \[\sum_{\substack{|S|=k,\\ S\cap U=\emptyset}}\hat f(S)^2,\]
  which we will apply by fixing a vertex $v$ and letting $U$ be the set of edges that do not have an endpoint at $v$. In this section we will give a bound on the Fourier coefficients of sets that \emph{do} contain a particular edge, using the notion of pivotality.

  An edge $e\in E$ is said to be \emph{pivotal} for $f$ and $\omega\in \Omega$ if $f(\sigma_e(\omega))\neq f(\omega)$, where $\sigma_e(\omega)$ is the configuration obtained from $\omega$ by switching the value of $\omega(e)$. Let $\mathcal P_f=\mathcal P_f(\omega)$ denote the set of pivotal edges.
  The next lemma allows us to control the Fourier coefficients by estimating the probability of being pivotal.
  Similar results are known in the case when $\P$ is a uniform measure; see~\cite[Proposition 4.4 and Chapter 9]{garban_steif_noise}.
  The non-uniform case is somewhat more delicate.
  
  We say that two functions $f,g:\Omega\to\R$ are \emph{jointly monotone} if
  \[\big(f(\omega)-f(\sigma_e(\omega))\big)\big(g(\omega)-g(\sigma_e(\omega))\big)\ge 0 \hspace{3mm} \forall e\in E.\]
  In particular if $f$ and $g$ are both monotone increasing (or both monotone decreasing) then $f$ and $g$ are jointly monotone.

  \begin{lemma}\label{lemma:fourier_to_pivotal}
    Suppose that $f,g:\Omega\rightarrow\{0,1\}$ are jointly monotone. Then for any $e\in E$,
    \[
      \sum_{S: e\in S}\hat f(S) \hat g(S) = p(1-p)\P(e\in\mathcal P_f\cap \mathcal P_g).
    \]
  \end{lemma}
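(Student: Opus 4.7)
The plan is to prove the identity via a discrete derivative representation of the Fourier expansion. For each bit $e\in E$ and configuration $\omega\in\Omega$, write $\omega^0,\omega^1$ for the configurations obtained from $\omega$ by forcing $\omega(e)=0$ or $\omega(e)=1$ respectively, and define the discrete derivative
\[\nabla_e f(\omega) := f(\omega^1) - f(\omega^0).\]
Since $f$ takes values in $\{0,1\}$, we have $\nabla_e f(\omega)\in\{-1,0,1\}$ with $|\nabla_e f(\omega)|=\ind_{\{e\in\mathcal P_f\}}$, and $\nabla_e f$ does not depend on $\omega(e)$. The joint monotonicity hypothesis says precisely that $\nabla_e f(\omega)\nabla_e g(\omega)\ge 0$, so combined with $f,g\in\{0,1\}$ we get the clean identity
\[\nabla_e f(\omega)\,\nabla_e g(\omega) = \ind_{\{e\in\mathcal P_f\cap\mathcal P_g\}}.\]
This is the reason the joint monotonicity hypothesis is present: without it, cross terms on the left-hand side of the lemma could cancel rather than add.

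Next I would compute $\nabla_e f$ in Fourier coordinates. Splitting the expansion $f=\sum_S\hat f(S)\chi_S$ according to whether $e\in S$,
\[f(\omega) = \sum_{S\not\ni e}\hat f(S)\chi_S(\omega) + r_e(\omega)\sum_{S\ni e}\hat f(S)\chi_{S\setminus\{e\}}(\omega),\]
and using the elementary computation
\[r_e(\omega^1)-r_e(\omega^0) = \sqrt{\tfrac{1-p}{p}} + \sqrt{\tfrac{p}{1-p}} = \frac{1}{\sqrt{p(1-p)}},\]
gives
\[\nabla_e f(\omega) = \frac{1}{\sqrt{p(1-p)}}\sum_{S\ni e}\hat f(S)\chi_{S\setminus\{e\}}(\omega).\]
Note that the right-hand side is a function of $\omega$ that depends only on the bits in $E\setminus\{e\}$, and the family $\{\chi_T:T\subset E\setminus\{e\}\}$ is orthonormal in $L^2(\P)$ (it is simply the product-measure Fourier basis on the remaining bits).

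Finally I would apply Plancherel's identity~\eqref{eq:plancherel} restricted to this sub-basis. Setting $\tilde f_e:=\sqrt{p(1-p)}\,\nabla_e f$ and $\tilde g_e:=\sqrt{p(1-p)}\,\nabla_e g$, the previous display expresses them both in the orthonormal basis $\{\chi_T:T\not\ni e\}$ with coefficients $\hat f(T\cup\{e\})$ and $\hat g(T\cup\{e\})$ respectively. Therefore
\[\E[\tilde f_e\,\tilde g_e] = \sum_{T\not\ni e}\hat f(T\cup\{e\})\,\hat g(T\cup\{e\}) = \sum_{S\ni e}\hat f(S)\,\hat g(S).\]
On the other hand, by the pivotality identity above,
\[\E[\tilde f_e\,\tilde g_e] = p(1-p)\,\E[\nabla_e f\cdot\nabla_e g] = p(1-p)\,\P(e\in\mathcal P_f\cap\mathcal P_g),\]
which yields the lemma.

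The only delicate point, and the step I would write out most carefully, is the verification that joint monotonicity (together with $f,g\in\{0,1\}$) really does collapse the product $\nabla_e f\cdot\nabla_e g$ to the indicator $\ind_{\{e\in\mathcal P_f\cap\mathcal P_g\}}$ rather than to $\pm\ind_{\{e\in\mathcal P_f\cap\mathcal P_g\}}$; this is what rules out the sign cancellations that would otherwise obstruct a clean identity in the non-uniform (and in particular non-symmetric) setting, and it explains why the lemma as stated requires joint monotonicity rather than merely $\{0,1\}$-valued functions. Everything else is bookkeeping with the normalisation factor $1/\sqrt{p(1-p)}$ produced by $r_e(\omega^1)-r_e(\omega^0)$.
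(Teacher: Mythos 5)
Your proof is correct and follows essentially the same route as the paper: a discrete derivative in the bit $e$, identification of its Fourier expansion with the coefficients $\hat f(S)$ for $S\ni e$, Plancherel/orthonormality, and then joint monotonicity to turn $\E[\nabla_e f\,\nabla_e g]$ into a pivotality probability. The only difference is cosmetic: you use the unweighted derivative $f(\omega^1)-f(\omega^0)$ (so the expansion lives on $\chi_{S\setminus\{e\}}$ and the pivotality expectation is immediate), whereas the paper weights by $|r_e(\omega)|$ and then averages over $\omega(e)$ using independence of pivotality from the value of the bit $e$.
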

  
  \begin{proof}
    Fix $e\in E$ and define an operator $\nabla_e$ by setting
    \[
      \nabla_e f(\omega) = |r_e(\omega)|\big(f(\omega)-f(\sigma_e(\omega))\big).
    \]
    Since $f(\omega)=\sum_S \hat f(S)\chi_S(\omega)$, from the definition of $\chi_S$ we have that
    \[
      \nabla_e f(\omega) = |r_e(\omega)|\big(r_e(\omega)-r_e(\sigma_e(\omega))\big)\sum_{S:e\in S}\hat f(S)\chi_{S\setminus\{e\}}(\omega)
    \]
    Now, if $\omega(e)=1$, then $r_e(\omega)=((1-p)/p)^{1/2}$ and $r_e(\sigma_e(\omega))=-(p/(1-p))^{1/2}$ and so
    \begin{align*}
      |r_e(\omega)|\big(r_e(\omega)-r_e(\sigma_e(\omega))\big)&= \left(\frac{1-p}{p}\right)^{1/2}\left(\left(\frac{1-p}{p}\right)^{1/2}+\left(\frac{p}{1-p}\right)^{1/2}\right)\\
      &=1/p\\
      &=\frac{r_e(\omega)}{p^{1/2}(1-p)^{1/2}}.
    \end{align*}
    On the other hand if $\omega(e)=0$, then $r_e(\omega)=-(p/(1-p))^{1/2}$ and $r_e(\sigma_e(\omega))=((1-p)/p)^{1/2}$ so that
    \begin{align*}
      |r_e(\omega)|\big(r_e(\omega)-r_e(\sigma_e(\omega))\big)&= \left(\frac{p}{1-p}\right)^{1/2}\left(-\left(\frac{p}{1-p}\right)^{1/2}-\left(\frac{1-p}{p}\right)^{1/2}\right)\\
      &=-1/(1-p)\\
      &=\frac{r_e(\omega)}{p^{1/2}(1-p)^{1/2}}.
    \end{align*}
    Thus either way, we see that
    \[
      \nabla_e f(\omega) = \frac{1}{p^{1/2}(1-p)^{1/2}}\sum_{S:e\in S}\hat f(S)\chi_{S}(\omega).
    \]
    It follows that
    \[
      \widehat{\nabla_e f}(S)=\begin{cases}
        p^{-1/2}(1-p)^{-1/2}\hat f(S) &\text{if }e\in S\\
        0&\text{if }e\notin S
      \end{cases}
    \]
    and by Plancherel's identity~\eqref{eq:plancherel},
    \begin{equation}\label{eq:nabla_1}
      \E\left[(\nabla_e f)(\nabla_e g)\right]=\sum_S \widehat{\nabla_e f}(S)\widehat{\nabla_e g}(S)=\frac{1}{p(1-p)}\sum_{S: e \in S}\hat f(S)\hat g(S).
    \end{equation}

    Next we compute $\E\left[(\nabla_e f)(\nabla_e g)\right]$ directly.
    Notice that since $f$ and $g$ are jointly monotone,
    \[
      \nabla_e f(\omega)\nabla_e g(\omega)=\begin{cases}
        (1-p)/p &\text{if }e\in\mathcal P_f(\omega)\cap \mathcal P_g(\omega)\text{ and }\omega(e)=1\\
        p/(1-p) &\text{if }e\in\mathcal P_f(\omega)\cap \mathcal P_g(\omega)\text{ and }\omega(e)=0\\
        0&\text{otherwise.}
      \end{cases}
    \]
    Since the event $\{e\in\mathcal P_f\cap \mathcal P_g\}$ is independent of $\omega(e)$, we see that
    \begin{align*}
      \E\left[(\nabla_e f)(\nabla_e g)\right]&=p\frac{1-p}{p}\P(e\in\mathcal P_f\cap \mathcal P_g) + (1-p)\frac{p}{1-p}\P(e\in\mathcal P_f\cap \mathcal P_g)\\
      & = \P(e\in\mathcal P_f\cap \mathcal P_g).
    \end{align*}
    The lemma now follows by combining this with~\eqref{eq:nabla_1}.
  \end{proof}

  \section{Component sizes of \texorpdfstring{Erd\H{o}s-R\'enyi}{Erdos-Renyi} graphs}\label{sec:prelim}
  In this section we collect some preliminary results about component sizes for Erd\H{o}s-R\'enyi graphs, which will be useful later on. We let $\mathbf P_{n,p}$ be the law of ER$(n,p)$, $\C_v$ the connected component containing vertex $v$, and $L_n$ the size of the largest connected component.
   
  We begin by presenting a result that gives the tail behaviour of the size of components. For a proof of Proposition \ref{prop:pittel}, see \cite{roberts:ER}. Pittel \cite[Proposition 2]{MR1842114} proved part (b) when $\lambda$ is fixed and $k=an^{2/3}$ where $a$ is large but does not depend on $n$.

  \begin{proposition}\label{prop:pittel}
    Let $G$ be an ER$(n,1/n-\lambda n^{-4/3})$ random graph. Write $p=1/n-\lambda n^{-4/3}$. Suppose that $(3\lambda\wedge 1)\le A_n \ll n^{1/12}$ and $|\lambda| \ll n^{1/12}$.
    Then as $n\to\infty$,
    \begin{enumerate}[(a)]
      \item For any vertex $v$, 
        \[
        \displaystyle \mathbf P_{n,p}(|\mathcal C_v| \ge A_n n^{2/3} ) = \frac{A_n^{3/2}}{(8\pi)^{1/2} n^{1/3}G'_\lambda(A_n)} e^{-G_\lambda(A_n)} (1+O(\tfrac{1}{A_n})+o(1));
      \]
      \item $\displaystyle \mathbf P_{n,p}\left(L_n \ge A_n n^{2/3} \right) = \frac{A_n^{1/2}}{(8\pi)^{1/2} G'_\lambda(A_n)} e^{-G_\lambda(A_n)} (1+O(\tfrac{1}{A_n})+o(1))$
    \end{enumerate}
    where $G_\lambda(x) = x^3/8 - \lambda x^2/2 + \lambda^2 x/2$.
  \end{proposition}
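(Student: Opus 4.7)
My approach starts from the standard combinatorial identity
\begin{equation*}
\mathbf{P}_{n,p}(|\C_v| = k) = \binom{n-1}{k-1}\, c_k(p)\, (1-p)^{k(n-k)},
\end{equation*}
where $c_k(p) = \sum_{m \ge k-1} W(k,m)\, p^m (1-p)^{\binom{k}{2}-m}$ and $W(k,m)$ denotes the number of connected labelled graphs on $k$ vertices with $m$ edges. The three factors correspond to choosing the other $k-1$ vertices in the component, requiring the induced subgraph to be connected, and forbidding outgoing edges. The first step is to obtain sharp asymptotics for $c_k(p)$ in the near-critical window. Writing $m = k-1+\ell$ and using Wright's asymptotic formula for $W(k,k-1+\ell)$ (together with a careful summation over the excess $\ell$, including the regime $\ell \asymp k^{1/3}$), one expects an expression of the shape $c_k(p) \sim k^{k-2}\, p^{k-1}(1-p)^{\binom{k}{2}-k+1}\,\Phi(k,\lambda)$, where $\Phi$ aggregates the contributions from all excess levels.

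The second step is an asymptotic expansion. Setting $k = xn^{2/3}$ with $x = A_n$ and $p = 1/n - \lambda n^{-4/3}$, one Taylor-expands $\log(1-p)$ in $(1-p)^{k(n-k)}$, applies Stirling to $\binom{n-1}{k-1}$, and expands the tree prefactor. The divergent terms should cancel in the familiar critical fashion: the leading $-xn^{2/3}$ coming from $-pk(n-k)$ is killed by the $p^{k-1}$ factor in the tree contribution; the $n^{1/3}$-order terms $x^2 n^{1/3}$ and $\lambda x n^{1/3}$ are cancelled against corresponding pieces of the Stirling expansion of the binomial coefficient; and what survives as an $O(1)$ exponential is exactly $-x^3/8 + \lambda x^2/2 - \lambda^2 x/2 = -G_\lambda(x)$. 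Throughout, one must retain explicit error control uniformly for $|\lambda|\ll n^{1/12}$ and $A_n \ll n^{1/12}$.

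Part (a) then follows by a discrete Laplace argument applied to $\sum_{k \ge A_n n^{2/3}}\mathbf{P}_{n,p}(|\C_v|=k)$. Since $G_\lambda'(x) = 3x^2/8 - \lambda x + \lambda^2/2$ is positive and of order $A_n^2$ whenever $A_n \ge 3\lambda \wedge 1$, the sum is dominated by terms near $k = A_n n^{2/3}$ and is well approximated by $\int_{A_n}^\infty e^{-G_\lambda(x)}\,{\rm d}x$ times the polynomial prefactor, producing the factor $(n^{1/3} G_\lambda'(A_n))^{-1}$ along with $A_n^{3/2}/(8\pi)^{1/2}$. For part (b), I would introduce $X := \#\{v : |\C_v|\ge A_n n^{2/3}\}$, so that $\E[X] = n\,\mathbf{P}_{n,p}(|\C_v|\ge A_n n^{2/3})$. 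On $\{L_n \ge A_n n^{2/3}\}$ we have $X \ge A_n n^{2/3}$; a second-moment argument ruling out two disjoint components of size $\ge A_n n^{2/3}$ gives $X = (1+o(1))\, A_n n^{2/3}$ on this event, hence $\mathbf{P}_{n,p}(L_n \ge A_n n^{2/3}) = (1+o(1))\,\E[X]/(A_n n^{2/3})$, and the ratio $n^{1/3}/A_n$ between the two formulas emerges automatically.

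The main obstacle I expect is the uniform asymptotic evaluation of $c_k(p)$, and in particular getting the correct $\lambda^2 x/2$ term in $G_\lambda$ with the right polynomial prefactor, uniformly in $\lambda$ and $A_n$. Wright's formula is pointwise in the excess $\ell$, so producing this sharp tail estimate requires either a generating-function / contour saddle-point analysis, or an equivalent combinatorial estimate obtained by coding components with Brownian excursions in the spirit of Aldous \cite{aldous_mult_coal}. A secondary delicate point is the second-moment estimate for part (b) when $A_n$ is close to $3\lambda \wedge 1$: there the probability of two large components is not automatically negligible, and one must use the independence of disjoint explorations together with tight concentration of the exploration breadth-first walk.
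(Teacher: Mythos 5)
The paper does not actually prove Proposition \ref{prop:pittel}: it is quoted as an input, with the proof deferred to the companion paper \cite{roberts:ER} (and \cite{MR1842114} for the special case of fixed $\lambda$ and large constant $A_n$). So there is no in-paper argument to match your proposal against; what you have sketched is essentially the classical counting route of Pittel. Judged on its own terms, it has a genuine gap at its analytic core: the uniform sharp asymptotics for $c_k(p)$, equivalently for $\mathbf P_{n,p}(|\C_v|=k)$ with the $(8\pi)^{-1/2}$ constant, the $-\lambda x^2/2+\lambda^2 x/2$ correction, and errors controlled uniformly for $|\lambda|\ll n^{1/12}$ and $A_n\ll n^{1/12}$, is only asserted ``in shape''. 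This is not a routine step. Wright's formula for $W(k,k-1+\ell)$ is valid for $\ell=o(k^{1/3})$, whereas for $k\asymp A_n n^{2/3}$ the dominant excess in the sum over $\ell$ is of order $A_n^3$, which for $A_n$ near $n^{1/12}$ exceeds $k^{1/3}\asymp A_n^{1/3}n^{2/9}$; one therefore needs the Bollob\'as--{\L}uczak/Bender--Canfield--McKay range of the connectivity asymptotics, together with genuinely uniform error control in $\lambda$ and $A_n$, before the cancellations you describe can be trusted to leave exactly $e^{-G_\lambda(A_n)}$ with the stated prefactor. You flag this yourself as the main obstacle, but without it neither part (a) nor (b) is established, since everything downstream (the Laplace summation, the constant) hinges on it.

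A secondary but concrete error is in your derivation of (b): the claim that $X=(1+o(1))A_n n^{2/3}$ on the event $\{L_n\ge A_n n^{2/3}\}$ is false when $A_n$ is bounded (which the hypotheses allow, e.g.\ $A_n\equiv 1$). Conditioned on $|\C_v|\ge A_n n^{2/3}$, the overshoot of the component size is of order $n^{2/3}/G'_\lambda(A_n)$, which is comparable to $A_n n^{2/3}$ for $A_n=O(1)$, so no such concentration holds; it is only saved by the fact that the statement tolerates a relative error $O(1/A_n)$. The cleaner and standard reduction is to count components rather than vertices: set $Y=\sum_v |\C_v|^{-1}\ind_{\{|\C_v|\ge A_n n^{2/3}\}}$, so that $\E[Y]=n\sum_{k\ge A_n n^{2/3}}k^{-1}\mathbf P_{n,p}(|\C_v|=k)$ reproduces the right-hand side of (b) directly from the point estimates (the replacement of $1/k$ by $1/(A_nn^{2/3})$ costs only $O(1/A_n^3)$), and then $\E[Y]-\tfrac12\E[Y(Y-1)]\le \mathbf P_{n,p}(L_n\ge A_n n^{2/3})\le\E[Y]$ with the factorial moment handled by the disjoint-occurrence estimate you mention, whose relative contribution decays superpolynomially in $A_n$ and so sits inside the $O(1/A_n)+o(1)$ error. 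With that repair, your reductions of (a) and (b) to the point estimates are sound; the missing piece remains the uniform point estimate itself.
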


  We will also need bounds on $\mathbf P_{n,p}(|\mathcal C_v|=k)$. Again we refer to \cite{roberts:ER} for a proof.

  \begin{lemma}\label{lemma:branching_approx}
    Let $G=(V,E)$ be an ER$(n,1/n-\lambda_n n^{-4/3})$ random graph. Let $p=1/n-\lambda_n n^{-4/3}$ and fix $M\in(0,\infty)$. There exist constants $0< c_1\le c_2<\infty$ such that
    \begin{enumerate}[(a)]
      \item if $k\le Mn^{2/3}$ and $|\lambda_n| \le n^{1/12}$, then for any vertex $v$,
        \[\frac{c_1}{k^{3/2}}e^{-F_\lambda(k/n^{2/3})} \le \mathbf P_{n,p}(|\C_v|=k) \le \frac{c_2}{k^{3/2}}e^{-F_\lambda(k/n^{2/3})}\]
        where $F_\lambda(x) = x^3/6 - \lambda x^2/2 + \lambda^2 x/2$;
      \item if $n^{2/3}\le k\le n^{3/4}$ and $|\lambda| \le n^{1/12}$, then for any vertex $v$,
        \[\frac{c_1 k^{3/2}}{n^2} e^{-G_\lambda(k/n^{2/3})} \le \mathbf P_{n,p}(|\C_v|=k) \le \frac{c_2 k^{3/2}}{n^2} e^{-G_\lambda(k/n^{2/3})}\]
        where $G_\lambda(x) = x^3/8 - \lambda x^2/2 + \lambda^2 x/2$.
    \end{enumerate}
  \end{lemma}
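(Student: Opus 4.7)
The plan is to begin with the exact enumeration
\[
\mathbf{P}_{n,p}(|\mathcal{C}_v|=k) \;=\; \binom{n-1}{k-1}\, q_k(p)\, (1-p)^{k(n-k)},
\]
where $q_k(p)$ is the probability that an independent ER$(k,p)$ is connected. The three factors capture, respectively, the choice of the other $k-1$ vertices of the component, the event that the induced graph is connected, and the condition that no edge exits the component. Stirling's formula handles the binomial, and a Taylor expansion of $\log(1-p)$ with $p=n^{-1}(1-\lambda n^{-1/3})$ handles the boundary factor, so the substantive work lies in controlling $q_k(p)$, which behaves qualitatively differently in the two regimes.

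For part (a), the constraint $k\le Mn^{2/3}$ gives $kp=O(n^{-1/3})\to 0$, so the connected graphs on $k$ vertices are tree-dominated. By Cayley's formula the tree contribution to $q_k(p)$ is $k^{k-2}p^{k-1}(1-p)^{\binom{k}{2}-(k-1)}$, and Wright's enumeration of connected graphs with surplus $s$ shows that the surplus-$s$ contribution multiplies this by roughly $(k^{3/2}p)^s$ times a tame constant. In this range $k^{3/2}p=(k/n^{2/3})^{3/2}=O(1)$, so summing over $s$ yields only a bounded multiplicative factor which may be absorbed into the constants $c_1$ and $c_2$. Stirling on $k^{k-2}/(k-1)!$ then produces the $k^{-3/2}$ prefactor. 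What remains is to expand $\log\binom{n-1}{k-1}$, $(k-1)\log p$ and $M_k\log(1-p)$ (with $M_k=k(n-k)+\binom{k}{2}-(k-1)$) one order past the naive Stirling precision, so as to retain contributions at the scales $k^3/n^2$, $\lambda k^2/n^{4/3}$ and $\lambda^2 k/n^{2/3}$; the $O(k)$ and $O(k^2/n)$ pieces from the three factors cancel, leaving precisely the cubic polynomial $-F_\lambda(k/n^{2/3})$ in the exponent up to $o(1)$.

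For part (b), $n^{2/3}\le k\le n^{3/4}$ gives $k^{3/2}p\gg 1$, so the surplus expansion no longer converges and the tree approximation is not sharp. The cleanest route here is to differentiate the tail asymptotic of Proposition~\ref{prop:pittel}: writing $\mathbf{P}_{n,p}(|\mathcal{C}_v|=k)\approx -n^{-2/3}\tfrac{d}{dA}\mathbf{P}_{n,p}(|\mathcal{C}_v|\ge An^{2/3})$ at $A=k/n^{2/3}$, the dominant contribution to the derivative comes from differentiating the exponential $e^{-G_\lambda(A)}$, which produces a factor $G_\lambda'(A)$ that exactly cancels the $G_\lambda'(A)$ in the denominator of Proposition~\ref{prop:pittel}; after cancellation the prefactor is $A^{3/2}/((8\pi)^{1/2}n)=k^{3/2}/((8\pi)^{1/2}n^2)$, as required. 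A small additional argument is needed to show that the remainder term in Proposition~\ref{prop:pittel} does not spoil the two-sided bound when we take a discrete difference rather than a genuine derivative; this is standard since $G_\lambda$ is smooth and monotone in the relevant range.

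The main obstacle is the bookkeeping in part (a): the three contributions $k^3/n^2$, $\lambda k^2/n^{4/3}$ and $\lambda^2 k/n^{2/3}$ are all of comparable relevance in the exponent and they arise from different factors, so one must verify carefully that all intermediate cross-terms (for example a $\lambda k n^{-1/3}$ appearing in both $(k-1)\log p$ and $M_k\log(1-p)$) cancel to leave precisely $-F_\lambda(k/n^{2/3})+o(1)$. The conditions $|\lambda_n|\le n^{1/12}$ and $k\le n^{3/4}$ in the hypotheses are exactly what is needed to push all residual higher-order terms below $o(1)$ uniformly across each regime, and thus to secure the two-sided bound with constants independent of $k$ and $\lambda_n$.
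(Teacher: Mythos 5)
The paper does not actually prove this lemma---it is quoted from \cite{roberts:ER}---so your proposal has to stand on its own, and as written it has a genuine gap in part (b). You propose to obtain $\mathbf P_{n,p}(|\mathcal C_v|=k)$ by discretely differencing the tail asymptotic of Proposition \ref{prop:pittel}, but that step fails. The proposition only gives $\mathbf P_{n,p}(|\mathcal C_v|\ge An^{2/3})=\Phi_\lambda(A)\,(1+O(1/A_n)+o(1))$ with an uncontrolled multiplicative error, while the quantity you want is the difference of two consecutive tail values, $T(A)-T(A+n^{-2/3})$, which is smaller than $T(A)$ by a factor of order $G'_\lambda(A)\,n^{-2/3}$; in the allowed range $A\le n^{1/12}$, $|\lambda|\le n^{1/12}$ this factor is at most roughly $n^{-1/2}$, vastly smaller than the $o(1)$ (indeed only $O(1/A)$) relative errors in each tail value. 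Smoothness and monotonicity of $G_\lambda$ are irrelevant: what would be needed is regularity of the \emph{error term} in $A$ at scale $n^{-2/3}$, which the tail asymptotic does not provide. In particular the lower bound in (b) cannot be extracted from tail information on the $n^{2/3}$ scale at all, since such asymptotics are consistent with the point masses being distributed very unevenly over consecutive values of $k$. The actual proof in \cite{roberts:ER} works directly with the exact formula $\binom{n-1}{k-1}q_k(p)(1-p)^{k(n-k)}$ and carries out the sum over the surplus in the regime $k\ge n^{2/3}$, where the typical surplus is large; it is precisely this summation that shifts the cubic constant from $1/6$ to $1/8$ and produces the $k^{3/2}/n^2$ prefactor. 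Your proposal never engages with that mechanism, which is the genuinely hard part of the lemma.

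Part (a) follows the standard and essentially correct route, with two caveats. First, ``Wright's enumeration \ldots times a tame constant'' is too loose: Wright's constants grow superexponentially in the surplus $s$, so summing the fixed-$s$ asymptotics does not by itself give a bounded factor; you need a bound uniform in $s$, such as Bollob\'as' estimate that the number of connected graphs on $k$ vertices with $k-1+s$ edges is at most $(c/s)^{s/2}k^{k-2+3s/2}$, after which $\sum_s (c/s)^{s/2}(k^{3/2}p)^s$ is bounded when $k\le Mn^{2/3}$ (and the lower bound needs only the tree term). Second, when you carry out the cancellation of exponents you should fix the sign convention for $\lambda$ carefully: with $p=1/n-\lambda n^{-4/3}$ taken literally, the middle term of the cubic comes out as $+\lambda x^2/2$ in the exponent rather than $-\lambda x^2/2$ as displayed in the lemma (the paper's own usage, e.g.\ in Lemma \ref{lemma:uvjk}, indicates the intended convention), so state explicitly which parametrization you are expanding around.
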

  
  Adapting these bounds for our particular purposes, we get the following.
  
  \begin{lemma}\label{lemma:uvjk}
  There exists a finite constant $c$ such that whenever $0\le k\ll n^{3/4}$, for any vertex $v$,
  \begin{enumerate}[(a)]
      \item for any $j\ge 1$,
        \[\mathbf P_{n-k,1/n}(|\C_v|\ge j) \le \frac{c}{j^{1/2}}\exp\Big(-\frac{(k+j)^3}{8n^2} + \frac{k^3}{8n^2}\Big);\]
      \item if $(n-k)^{2/3}\le j \ll (n-k)^{3/4}$ then
        \[\mathbf P_{n-k,1/n}(|\C_v|= j) \le \frac{c j^{3/2}}{n^2} \exp\Big(-\frac{(k+j)^3}{8n^2} + \frac{k^3}{8n^2}\Big).\]
    \end{enumerate}
  \end{lemma}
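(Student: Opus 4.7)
The plan is to recognise $\mathrm{ER}(n-k, 1/n)$ as an instance of the near-critical regime studied in Proposition~\ref{prop:pittel} and Lemma~\ref{lemma:branching_approx}. Writing $m := n-k$, the identity $1/n = 1/m - \lambda_m m^{-4/3}$ holds with $\lambda_m := m^{1/3} k/n$; the hypothesis $k \ll n^{3/4}$ gives $\lambda_m \ll m^{1/12}$, so those earlier results apply to $\mathrm{ER}(m, 1/n)$. Since $k \ll n$ we also have $n/2 \le m \le n$, so the bounded ratio $n/m$ can be absorbed into multiplicative constants throughout.

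For part (b), I would apply Lemma~\ref{lemma:branching_approx}(b) with $(m, \lambda_m)$ in place of $(n, \lambda)$, which yields
\[\mathbf P_{n-k, 1/n}(|\C_v| = j) \;\le\; \frac{c_2\, j^{3/2}}{m^2}\, e^{-G_{\lambda_m}(j/m^{2/3})}.\]
Using the factorisation $G_\lambda(x) = x(x-2\lambda)^2/8$ together with the algebraic identity
\[\frac{j}{m^{2/3}} - 2\lambda_m \;=\; \frac{n(j-2k) + 2k^2}{m^{2/3}\, n},\]
one obtains the closed-form expression
\[G_{\lambda_m}(j/m^{2/3}) \;=\; \frac{j\,[n(j-2k)+2k^2]^2}{8\, m^2\, n^2},\]
after which the work reduces to comparing this with $((k+j)^3 - k^3)/(8n^2)$. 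For part (a), I would either sum the part (b) bound (together with Lemma~\ref{lemma:branching_approx}(a) for $j' \le m^{2/3}$) over $j' \ge j$, extracting a $j^{-1/2}$ prefactor from the fact that the dominant contribution comes from $j' \asymp j$; or apply Proposition~\ref{prop:pittel}(a) directly to $\mathrm{ER}(m, 1/n)$, noting that the prefactor $A_m^{3/2}/[m^{1/3} G'_{\lambda_m}(A_m)]$ simplifies to $\Theta(j^{-1/2})$ when $A_m = j/m^{2/3}$ is not close to a critical point of $G_{\lambda_m}$. For values of $j$ where the asymptotic formulas fail (too small $j$, or $A_m$ in the anomalous interval $(2\lambda_m/3, 2\lambda_m)$), the right-hand side of the claim is of order one and the bound is immediate; for the very small values $j = O(1)$ it is sharper still to control $\mathbf P(|\C_v|\ge j)$ by a simple BFS random-walk Chernoff estimate.

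The main obstacle, and the bulk of the work, is the algebraic comparison between the ``true'' exponent $G_{\lambda_m}(j/m^{2/3})$ emerging from Lemma~\ref{lemma:branching_approx} and the claimed exponent $((k+j)^3 - k^3)/(8n^2)$. Both expressions share the leading $j^3/(8n^2)$ term and agree in the limit $k \to 0$, but the factored form $G_\lambda(x) = x(x-2\lambda)^2/8$ makes clear that the left-hand side vanishes at $j = 2k$, whereas the right-hand side is $\Theta(k^3/n^2)$ there. The comparison is therefore necessarily delicate and must be carried out by a careful case split according to the relative sizes of $k$ and $j$ ($j \ll k$, $j \asymp k$, and $j \gg k$), with the hypothesis $k, j \ll n^{3/4}$ used to control the residual error terms so that any remaining discrepancy can be absorbed into the constant $c$. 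I expect this polynomial case analysis, rather than any further probabilistic input, to be the main technical content.
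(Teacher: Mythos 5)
Your reduction to Proposition \ref{prop:pittel} and Lemma \ref{lemma:branching_approx} with $m=n-k$ is the right starting point, but the sign you assign to $\lambda$ derails the argument, and the case analysis you propose cannot repair it. With your choice $\lambda_m=+m^{1/3}k/n$ the exponent $G_{\lambda_m}(x)=x(x-2\lambda_m)^2/8$ vanishes at $j\approx 2k$, so the bound you would import from Lemma \ref{lemma:branching_approx}(b) there is of size $c\,j^{3/2}/m^2$ with no exponential decay, whereas the bound to be proved is $c\,j^{3/2}n^{-2}e^{-\Theta(k^3/n^2)}$. For $n^{2/3}\ll k\ll n^{3/4}$ (a range the lemma must cover, since it is applied with $k$ up to $N-1\approx n^{2/3}\log^{1/3}n$ and beyond in Lemmas \ref{lemma:Cuandv} and \ref{lemma:Cuvw}) the discrepancy is a factor $e^{\Theta(k^3/n^2)}\to\infty$, which cannot be ``absorbed into the constant $c$''; and your fallback claim that in the anomalous interval the right-hand side of the lemma is of order one is false in exactly this regime. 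No split into $j\ll k$, $j\asymp k$, $j\gg k$ can close this: with your sign the exponent inequality you need is simply untrue. A sanity check would have flagged the problem: ER$(n-k,1/n)$ is subcritical, yet your reading of the formula would assert a component of size $\approx 2k$ with only polynomially small probability.

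The resolution is that the relevant $\lambda$ is negative. The convention actually in force (as in \cite{roberts:ER}, and as confirmed by the application of Proposition \ref{prop:pittel}(b) ``with $\lambda=1$'' to $p=n^{-1}+n^{-4/3}$ in Section \ref{sec:large_beta}) is $p=1/m+\lambda m^{-4/3}$, so that $1/n$ corresponds to $\lambda=-(n-k)^{1/3}k/n\le 0$; the minus sign printed in the statements of Proposition \ref{prop:pittel} and Lemma \ref{lemma:branching_approx} is a slip, and the paper's proof of the present lemma uses the negative $\lambda$. Once $\lambda\le 0$, every term of $G_\lambda(x)=x^3/8-\lambda x^2/2+\lambda^2x/2$ is nonnegative, $G'_\lambda(x)\ge 3x^2/8$ turns the prefactor of Proposition \ref{prop:pittel}(a) into $c\,j^{-1/2}$ for part (a) (the range $j\le (n-k)^{2/3}$ being handled by summing Lemma \ref{lemma:branching_approx}(a) and using $F_\lambda\ge G_\lambda$), and the exponent comparison, far from being a delicate polynomial case analysis, is immediate:
\[
\frac{k^3}{8n^2}+G_\lambda\Big(\frac{j}{(n-k)^{2/3}}\Big)
=\frac{k^3}{8n^2}+\frac{j^3}{8(n-k)^2}+\frac{j^2k}{2n(n-k)}+\frac{jk^2}{2n^2}
\ \ge\ \frac{k^3+j^3+4j^2k+4jk^2}{8n^2}\ \ge\ \frac{(k+j)^3}{8n^2},
\]
since $(k+j)^3=k^3+j^3+3j^2k+3jk^2$. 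That two-line computation is the entire content of the paper's proof; your proposal, as written, does not reach it.
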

  
  \begin{proof}
    Note that
    \[\frac{1}{n} = \frac{1}{n-k} - \frac{1}{(n-k)^{4/3}} \Big(\frac{(n-k)^{1/3}k}{n}\Big).\]
    Therefore, setting $\lambda = - \frac{(n-k)^{1/3}k}{n}$ and $p=1/(n-k)-\lambda (n-k)^{-4/3}$ and applying Lemma \ref{lemma:branching_approx}(b), for $j\ge (n-k)^{2/3}$ we get
    \[\mathbf P_{n-k,1/n}(|\C_v|=j) = \mathbf P_{n-k,p}(|\C_v|=j) \le \frac{c j^{3/2}}{(n-k)^2} e^{-G_\lambda(j/(n-k)^{2/3})}.\]
    Similarly, noting that for $\lambda\le 0$ we have $G'_\lambda(x)\ge 3x^2/8$,  by Proposition \ref{prop:pittel}(a), if $j\ge (n-k)^{2/3}$ then
    \[\mathbf P_{n-k,1/n}(|\C_v|\ge j) \le \frac{c'}{j^{1/2}} e^{-G_\lambda(j/(n-k)^{2/3})}.\]
    Thirdly, since $F_\lambda(x)\ge G_\lambda(x)$ for all $x\ge 0$ and $F_\lambda$ is increasing in $x$, by Lemma \ref{lemma:branching_approx}(b), if $j\le (n-k)^{2/3}$ then
    \[\sum_{i=j}^{(n-k)^{2/3}} \mathbf P_{n-k,1/n}(|\C_v|=j) \le \frac{c''}{j^{1/2}} e^{-F_\lambda(j/(n-k)^{2/3})} \le \frac{c''}{j^{1/2}} e^{-G_\lambda(j/(n-k)^{2/3})}.\]
    It therefore remains to show that
    \[G_\lambda\Big(\frac{j}{(n-k)^{2/3}}\Big) \ge \frac{(k+j)^3}{8n^2} - \frac{k^3}{8n^2}.\]
    But indeed
    \begin{align*}
      \frac{k^3}{8n^2} + G_\lambda\Big(\frac{j}{(n-k)^{2/3}}\Big) &= \frac{k^3}{8n^2} + \frac{j^3}{8(n-k)^2} + \frac{j^2 k}{2n(n-k)} + \frac{j k^2}{2n^2}\\
      &\ge \frac{k^3}{8n^2} + \frac{j^3}{8n^2} + \frac{j^2 k}{2n^2} + \frac{j k^2}{2n^2}\\
      &\ge \frac{(k+j)^3}{8n^2}
    \end{align*}
    and the result follows.
  \end{proof}

  We give two more lemmas, which follow fairly easily from those above, but are less obviously useful. We will see later that they are exactly the bounds we need to estimate the probability that two vertices have unusually large components at different times.

  \begin{lemma}\label{lemma:Cuandv}
    Fix $M>0$. There exists a finite constant $c$ such that if $2n^{2/3}\le N\ll n^{3/4}$ then
    \[\mathbf P_{n,1/n} \big( |\C_u\cup \C_v|\ge N, \, |\C_u|< N, \, |\C_v| < N, \, \C_u\cap \C_v = \emptyset\big) \le c \frac{N^2}{n^2} e^{-N^3/(8n^2)}.\]
  \end{lemma}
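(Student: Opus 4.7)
The strategy is to condition on $\C_u$. Given $\C_u=S$ with $|S|=j$ and $v\notin S$, the subgraph on $\{1,\dots,n\}\setminus S$ has law ER$(n-j,1/n)$, so with $\mathcal E$ denoting the event in the lemma,
\[\mathbf P_{n,1/n}(\mathcal E) \le \sum_{j=1}^{N-1} \mathbf P_{n,1/n}(|\C_u|=j)\, \mathbf P_{n-j,1/n}\big(N-j \le |\C_v| < N\big).\]
I would split this sum at $j=n^{2/3}$, since the tail bound on $|\C_v|$ is sharp when $j$ is comparable to $N$ but too lossy when $j$ is small.

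For $n^{2/3}\le j<N$, upper bound the inner factor by $\mathbf P_{n-j,1/n}(|\C_v|\ge N-j)$ and apply Lemma \ref{lemma:uvjk}(a) (with the roles of $j$ and $k$ swapped) to get $c(N-j)^{-1/2}\exp(-N^3/(8n^2)+j^3/(8n^2))$. Combining with Lemma \ref{lemma:branching_approx}(b) for the outer factor, $cj^{3/2}n^{-2}\exp(-j^3/(8n^2))$, the $\pm j^3/(8n^2)$ contributions cancel and leave $c^2 j^{3/2}n^{-2}(N-j)^{-1/2}e^{-N^3/(8n^2)}$. Substituting $m=N-j$ and using $(N-m)^{3/2}\le N^{3/2}$, the sum is at most $N^{3/2}\sum_{m=1}^{N}m^{-1/2}=O(N^2)$, yielding the desired $O(N^2 n^{-2}e^{-N^3/(8n^2)})$.

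For $j<n^{2/3}$ I would instead exploit that the window $[N-j,N-1]$ for $|\C_v|$ contains only $j$ values, all at least $N/2\ge n^{2/3}$ by the hypothesis $N\ge 2n^{2/3}$. Lemma \ref{lemma:uvjk}(b) therefore applies pointwise, and since $j+k\ge N$ and $j^3/(8n^2)\le 1/8$,
\[\mathbf P_{n-j,1/n}(|\C_v|=k) \le \frac{ck^{3/2}}{n^2}\exp\Big(-\frac{(j+k)^3}{8n^2}+\frac{j^3}{8n^2}\Big) \le \frac{c'N^{3/2}}{n^2}e^{-N^3/(8n^2)}.\]
Summing over the $j$ choices of $k$ and combining with $\mathbf P_{n,1/n}(|\C_u|=j)\le cj^{-3/2}$ from Lemma \ref{lemma:branching_approx}(a), each $j$ contributes $O(N^{3/2}n^{-2}j^{-1/2}e^{-N^3/(8n^2)})$. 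Since $\sum_{j=1}^{n^{2/3}}j^{-1/2}=O(n^{1/3})$, the total is $O(N^{3/2}n^{-5/3}e^{-N^3/(8n^2)})$, which is at most $O(N^2 n^{-2}e^{-N^3/(8n^2)})$ precisely because $N\ge 2n^{2/3}$.

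The main obstacle is the small-$j$ regime: the natural tail bound on $|\C_v|$ would lose a factor of order $n^{1/3}$. One must instead exploit that $|\C_v|$ is constrained to a narrow window of width only $j$ near $N$ and use the pointwise estimate in Lemma \ref{lemma:uvjk}(b). The hypothesis $N\ge 2n^{2/3}$ is precisely what keeps $N-j\ge n^{2/3}$ throughout this window so that part (b), rather than the weaker part (a), of Lemma \ref{lemma:uvjk} applies.
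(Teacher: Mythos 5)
Your proof is correct, but it takes a different route from the paper's at the key step. The paper first uses symmetry to assume $|\C_v|\le|\C_u|$, which forces $|\C_u|=k\ge N/2\ge n^{2/3}$; the sum over $k$ then runs only over $[\lceil N/2\rceil, N-1]$, where Lemma \ref{lemma:branching_approx}(b) and Lemma \ref{lemma:uvjk}(a) combine exactly as in your large-$j$ regime, and the whole proof is a single computation. You instead condition on $|\C_u|=j$ for all $j$ from $1$ to $N-1$ and must therefore handle the small-$j$ regime separately, which you do by observing that $|\C_v|$ is confined to the window $[N-j,N-1]$ of width $j$ lying above $n^{2/3}$ (so Lemma \ref{lemma:uvjk}(b) applies pointwise), and that the resulting bound $O(N^{3/2}n^{-5/3})$ is dominated by $N^2n^{-2}$ precisely because $N\ge 2n^{2/3}$. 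Both arguments are sound and rest on the same two ingredient lemmas; the paper's symmetry reduction buys brevity by eliminating the small-$j$ case entirely, while your version is longer but makes explicit how the hypothesis $N\ge 2n^{2/3}$ enters (in the paper it enters more quietly, through $N/2\ge n^{2/3}$ being needed to apply Lemma \ref{lemma:branching_approx}(b)).
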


  \begin{proof}
    Clearly
    \begin{align*}
      &\mathbf P_{n,1/n} \big( |\C_u\cup \C_v|\ge N, \, |\C_u|< N, \, |\C_v| < N, \, \C_u\cap \C_v = \emptyset\big)\\
      &\quad\le 2 \mathbf P_{n,1/n} \big( |\C_u\cup \C_v|\ge N, \, |\C_v|\le |\C_u|< N, \, \C_u\cap \C_v = \emptyset\big)\\
      &\quad\le 2 \sum_{k=\lceil N/2\rceil}^{N-1} \mathbf P_{n,1/n}\big( |\C_u|=k \big) \mathbf P_{n,1/n} \big( |\C_v|\ge N-k, \, \C_u\cap \C_v = \emptyset \, \big|\, |\C_u|=k \big)\\
      &\quad= 2 \sum_{k=\lceil N/2\rceil}^{N-1} \mathbf P_{n,1/n}\big( |\C_u|=k \big) \mathbf P_{n-k,1/n} \big( |\C_v|\ge N-k \big).
    \end{align*}
    Applying Lemmas \ref{lemma:branching_approx}(b) and \ref{lemma:uvjk}(a), for $2n^{2/3}\le N\ll n^{3/4}$
    \begin{multline*}
    \mathbf P_{n,1/n} \big( |\C_u\cup \C_v|\ge N, \, |\C_u|< N, \, |\C_v| < N, \, \C_u\cap \C_v = \emptyset\big)\\
    \le 2 \hspace{-2.5mm}\sum_{k=\lceil N/2\rceil}^{N-1} \hspace{-2.5mm} \frac{ck^{3/2}}{n^2}e^{-k^3/(8n^2)}\frac{c'}{(N-k)^{1/2}}e^{-N^3/(8n^2)+k^3/(8n^2)} \hspace{-0.2mm}\le\hspace{-0.2mm} c'' \frac{N^2}{n^{2}}e^{-N^3/(8n^2)}.
    \end{multline*}
    This completes the proof.
  \end{proof}

  \begin{lemma}\label{lemma:Cuvw}
    There exists a finite constant $c$ such that for any distinct vertices $u$, $v$ and $w$, if $N \ll n^{3/4}$,
    \begin{align*}
      &\mathbf P_{n,1/n} \big( |\C_u\cup \C_v|\ge N, \, |\C_u|< N,\, \C_u\cap \C_v = \emptyset,\, w\in \C_u\big) \\
      &\hspace{14em}\le c \Big(\frac{1}{n^{2/3}N^{1/2}} + \frac{N^3}{n^3}\Big) e^{-N^3/(8n^2)}.
  \end{align*}
  \end{lemma}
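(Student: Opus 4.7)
The plan is to condition on the contents of $\C_u$ and exploit the fact that, once $\C_u$ is fixed, the edges in $V\setminus \C_u$ form an independent ER$(n-k,1/n)$ graph; in particular $\C_v$ has the law of the component of $v$ in such a graph. The constraints in the event force $|\C_u|\ge 2$ (since $w\in\C_u$ and $u\neq w$), $|\C_u|\le N-1$, and $|\C_v|\ge N-|\C_u|$. Summing over $|\C_u|=k$ and using exchangeability of vertices other than $u$ to write $\mathbf P_{n,1/n}(w\in\C_u\mid |\C_u|=k)=(k-1)/(n-1)$, one obtains
\[
\mathbf P_{n,1/n}(\cdots) \le \sum_{k=2}^{N-1} \frac{k-1}{n-1}\, \mathbf P_{n,1/n}(|\C_u|=k)\, \mathbf P_{n-k,1/n}(|\C_v|\ge N-k).
\]

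I would then bound each factor using the previous lemmas. For $\mathbf P_{n-k,1/n}(|\C_v|\ge N-k)$, Lemma \ref{lemma:uvjk}(a) gives the bound $c(N-k)^{-1/2}e^{-N^3/(8n^2)+k^3/(8n^2)}$. For $\mathbf P_{n,1/n}(|\C_u|=k)$ I would use Lemma \ref{lemma:branching_approx}(a) (giving $ck^{-3/2}e^{-k^3/(6n^2)}$) when $k\le n^{2/3}$, and Lemma \ref{lemma:branching_approx}(b) (giving $ck^{3/2}n^{-2}e^{-k^3/(8n^2)}$) when $n^{2/3}<k\le N-1$. The pleasant cancellation is that in the large-$k$ regime, $e^{-k^3/(8n^2)}$ exactly cancels the $e^{+k^3/(8n^2)}$ coming from Lemma \ref{lemma:uvjk}(a), leaving the clean overall factor $e^{-N^3/(8n^2)}$; in the small-$k$ regime only a harmless residual $e^{-k^3/(24n^2)}$ (bounded since $k^3\le n^2$) survives.

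The two terms in the stated bound correspond to two regimes. Splitting the outer sum at $k=N/2$ so that $(N-k)^{-1/2}\le (N/2)^{-1/2}$ on the small-$k$ part, the summand in the small-$k$ regime is at most $c(nk^{1/2}N^{1/2})^{-1}e^{-N^3/(8n^2)}$, and summing $\sum_{k\le \min(n^{2/3},N/2)} k^{-1/2}$ produces a factor $\lesssim n^{1/3}\wedge N^{1/2}$; combined with the prefactor this yields the first term $c/(n^{2/3}N^{1/2})$ (the case $N\lesssim n^{2/3}$ is automatic since then $1/n\le c/(n^{2/3}N^{1/2})$). The large-$k$ summand (nontrivial only when $N\gtrsim n^{2/3}$) is of order $k^{5/2}/(n^3(N-k)^{1/2})\cdot e^{-N^3/(8n^2)}$, and the Beta-type sum $\sum_{k=1}^{N-1}k^{5/2}(N-k)^{-1/2}\asymp N^3$ (with the main contribution coming from $k$ comparable to $N$) yields the second term $cN^3/n^3$.

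The main obstacle is purely bookkeeping: matching each range of $k$ to the correct lemma and verifying that the exponents telescope to a single clean $e^{-N^3/(8n^2)}$. Beyond that, the proof is a direct variant of the one for Lemma \ref{lemma:Cuandv}, the essential new feature being the extra factor $(k-1)/(n-1)$ from the constraint $w\in\C_u$, which gives an additional $k/n$ in the summand and thereby both makes the small-$k$ regime visible (producing the first term) and converts the $N^2/n^2$ from Lemma \ref{lemma:Cuandv} into $N^3/n^3$.
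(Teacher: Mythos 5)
Your proposal is correct and follows essentially the same route as the paper: sum over $|\C_u|=k$, extract the factor of order $k/n$ from the constraint $w\in\C_u$, factorize via the ER$(n-k,1/n)$ law of the remaining graph, and bound the two factors with Lemma \ref{lemma:branching_approx}(a)/(b) and Lemma \ref{lemma:uvjk}(a), splitting the sum at $k\approx n^{2/3}$ so that the small-$k$ range gives the $1/(n^{2/3}N^{1/2})$ term and the large-$k$ range the $N^3/n^3$ term. The extra splitting at $k=N/2$ and the Beta-type sum are just the bookkeeping the paper leaves implicit.
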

  
  \begin{proof}
    We begin by summing over the possible sizes for $\C_u$:
    \begin{align*}
    &\mathbf P_{n,1/n} \big( |\C_u\cup \C_v|\ge N, \, |\C_u|< N,\, \C_u\cap \C_v = \emptyset,\, w\in \C_u\big)\\
    &\hspace{25mm}= \sum_{j=2}^{N-1} \mathbf P_{n,1/n} \big( |\C_u|=j,\, |\C_v|\ge N-j,\, \C_u\cap \C_v = \emptyset,\, w\in \C_u\big)\\
    &\hspace{25mm}\le \sum_{j=2}^{N-1} \frac{j}{n} \mathbf P_{n,1/n} \big( |\C_u|=j \big) \mathbf P_{n-j,1/n} \big( |\C_v|\ge N-j \big).
    \end{align*}
    Write $K = \lfloor n^{2/3}\rfloor \wedge (N-1)$. For those values of $j$ less than $K$, by Lemmas \ref{lemma:branching_approx}(a) and \ref{lemma:uvjk}(a),
    \begin{align*}
      &\sum_{j=2}^{K} \frac{j}{n} \mathbf P_{n,1/n} \big( |\C_u|=j \big) \mathbf P_{n-j,1/n} \big( |\C_v|\ge N-j\big) \\
      &\hspace{25mm}\le c\sum_{j=2}^K \frac{j}{n}\frac{1}{j^{3/2}} \frac{1}{(N-j)^{1/2}} e^{-N^3/(8n^2)+j^3/(8n^2)}\\
      &\hspace{25mm}\le c' \frac{1}{n^{2/3}N^{1/2}} e^{-N^3/(8n^2)}.
    \end{align*}
    On the other hand, for those values of $j$ between $K$ and $N-1$, by Lemmas \ref{lemma:branching_approx}(b) and \ref{lemma:uvjk}(a),
    \begin{multline*}
    \sum_{j=K+1}^{N-1} \frac{j}{n} \mathbf P_{n,1/n} \big( |\C_u|=j \big) \mathbf P_{n-j,1/n} \big( |\C_v|\ge N-j\big)\\
    \le c\sum_{j=K+1}^{N-1} \frac{j^{5/2}}{n^3}e^{-j^3/(8n^2)}\frac{1}{(N-j)^{1/2}}e^{-N^3/(8n^2) + j^3/(8n^2)} \le c'\frac{N^3}{n^3}e^{-N^3/(8n^2)}
    \end{multline*}
    as required.
  \end{proof}

  \section{Exceptional times exist for \texorpdfstring{$\beta<2/3$}{small beta}}\label{sec:small_beta}
  In this section we aim to show that if $\beta<2/3$, then with high probability there exist times $t\in[0,1]$ when $|L_n(t)|> \beta n^{2/3}\log^{1/3}n$.
  Let $I=[\beta n^{2/3}\log^{1/3} n, 2 \beta n^{2/3}\log^{1/3} n]\cap\N$ and for $v \in \{1,\dots,n\}$ let
  \[
    Z_v:=\int_0^1 \1_{\{|\mathcal C_v(t)|\in I\}}{\rm d}t.
  \]
  Then by Cauchy-Schwarz and symmetry we have that
  \begin{align}\label{eq:peyley_zygmund}
    \P\left(\sup_{t\in [0,1]}|L_n(t)|\ge \beta n^{2/3}\log^{1/3} n\right) &\geq \P\left(\sum_{v=1}^n Z_v > 0\right) \nonumber\\
    &\geq \frac{\E\left[\sum_{v=1}^n Z_v\right]^2}{\E\left[\left(\sum_{v=1}^n Z_v\right)^2\right]}\nonumber\\
    & = \frac{n^2 \E[Z_1]^2}{n\E[Z_1^2] + n(n-1)\E[Z_1Z_2]}.
  \end{align}

  We begin with a lemma which ensures that the term $n\E[Z_1^2]$ in the denominator of~\eqref{eq:peyley_zygmund} does not contribute substantially when $\beta$ is small.

  \begin{lemma}\label{lemma:expectation_squared}
    If $\beta^3 < 16/3$, then
    \[
      \lim_{n\to\infty} \frac{\E[Z^2_1]}{n\E[Z_1]^2}=0. 
    \]
  \end{lemma}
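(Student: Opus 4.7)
The plan is to bypass the full second-moment machinery by exploiting the crude but sharp bound $Z_1^2 \le Z_1$, valid since $Z_1 = \int_0^1 \ind_{\{|\C_1(t)|\in I\}}\,dt$ takes values in $[0,1]$. This immediately reduces the problem to showing that $n\E[Z_1]\to\infty$, i.e.\ that the one-time probability of vertex $1$ sitting in a component of the right size is not too small.

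By Fubini and the stationarity of the dynamical Erd\H{o}s-R\'enyi graph at ER$(n,1/n)$, we have $\E[Z_1] = \P(|\C_1|\in I)$, where on the right-hand side $\C_1$ is the component of vertex $1$ in a static critical Erd\H{o}s-R\'enyi graph. I would then apply Proposition \ref{prop:pittel}(a) with $\lambda = 0$, so that $G_0(x) = x^3/8$ and $G_0'(x)=3x^2/8$, and $A_n = \beta\log^{1/3}n$ (which comfortably satisfies $A_n\ll n^{1/12}$). Plugging in gives
\[
\P(|\C_1|\ge \beta n^{2/3}\log^{1/3}n) = \frac{c_\beta(1+o(1))}{n^{1/3+\beta^3/8}\,\log^{1/6}n}
\]
for an explicit constant $c_\beta>0$, and the contribution from the upper endpoint of $I$ is of order $n^{-\beta^3}$ up to logs, hence negligible. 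Therefore $\E[Z_1]$ itself is of order $n^{-1/3-\beta^3/8}(\log n)^{-1/6}$.

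Combining,
\[
\frac{\E[Z_1^2]}{n\E[Z_1]^2}\;\le\;\frac{1}{n\E[Z_1]}\;\asymp\;\frac{\log^{1/6}n}{n^{2/3-\beta^3/8}},
\]
which tends to $0$ exactly when $2/3 - \beta^3/8 > 0$, i.e.\ when $\beta^3 < 16/3$.

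There is no real obstacle here: the proof is essentially a bookkeeping exercise in Proposition \ref{prop:pittel}(a), and the bound $Z_1^2\le Z_1$ is tight enough (because $Z_1$ is small, so $Z_1^2$ really is much smaller than $Z_1$ on average) that no finer correlation analysis over $s,t$ is needed. The substantive content of the second-moment method for Theorem \ref{thm:noise} lies entirely in controlling the off-diagonal term $n(n-1)\E[Z_1 Z_2]$, which is handled separately.
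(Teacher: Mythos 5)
Your proposal is correct and is essentially the paper's own argument: the paper likewise bounds $\E[Z_1^2]\le\E[Z_1]$ (since $Z_1\le 1$), computes $\E[Z_1]=\P(|\C_1(0)|\in I)$ via Fubini, stationarity and Proposition \ref{prop:pittel}(a) with $\lambda=0$, and concludes that $1/(n\E[Z_1])\to 0$ precisely when $\beta^3<16/3$. Your bookkeeping of the logarithmic factor ($\log^{-1/6}n$) and of the negligible upper-endpoint contribution of $I$ is fine and matches the estimate the paper uses later in Section \ref{sec:small_beta}.
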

  \begin{proof}
    By Fubini's theorem, the stationarity in distribution of $\C_1(t)$, and Proposition~\ref{prop:pittel}(a) with $\lambda=0$,
    \begin{equation}\label{eq:EZasymp}
    \E[Z_1]=\int_0^1 \P(|\C_1(t)|\in I) {\rm d}t = \P(|\mathcal C_1(0))|\in I) = \frac{(1+o(1))n^{-\beta^3/8-1/3}}{((9\pi/8) \beta\log n)^{1/2}}.
  \end{equation}
  Clearly $Z_1\le 1$ so $\E[Z_1^2]\le \E[Z_1]$, so by \eqref{eq:EZasymp},
  \[
    \frac{\E[Z^2_1]}{n\E[Z_1]^2} \leq \frac{1}{n\E[Z_1]} \leq C n^{\beta^3/8-2/3} \beta^{1/2}\log^{1/2}n
  \]
  for some constant $C$. The lemma follows.
\end{proof}

Now using Lemma~\ref{lemma:expectation_squared} with~\eqref{eq:peyley_zygmund}, it remains to show that
\[
  \limsup_{n\to\infty}\frac{\E[Z_1Z_2]}{\E[Z_1]^2}\leq 1.
\]
Notice that by Fubini's theorem,
\begin{equation}\label{eq:Z_1Z_2_by_integral}
\E[Z_1Z_2]=\int_0^1\int_0^1 \P(|\mathcal C_1(s))|\in I;|\mathcal C_2(t)|\in I)\,{\rm d}t\,{\rm d}s.
\end{equation}

We will estimate the double integral on the right hand side of~\eqref{eq:Z_1Z_2_by_integral} by splitting it into two pieces. We begin with an estimate for when $|t-s|$ is small.

\subsection{Small \texorpdfstring{$|t-s|$}{t-s}: a combinatorial method}

\begin{lemma}\label{lemma:smallt}
  Let $P=\P(|\mathcal C_v|\geq \beta n^{2/3}\log^{1/3} n)$. Then for any $\delta>0$,
  \begin{multline*}
    \int_0^1 \int_0^1 \P(|\mathcal C_1(s)|\in I; |\mathcal C_2(t)|\in I) \ind_{\{|t-s|\le\delta\}} \,{\rm d}t\, {\rm d}s \\
    \leq 2\delta P^2 +\frac{4\beta\log^{1/3} n}{n^{1/3}}\delta P + 2\delta^2 P.
  \end{multline*}
\end{lemma}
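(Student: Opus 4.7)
The plan is to use stationarity and symmetry to reduce the double integral to a one-dimensional integral over $r\in[0,\delta]$, and then to decompose the integrand into three pieces matching the three terms on the right-hand side.

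\textbf{Step 1 (Reduction).} The integrand $\P(|\mathcal C_1(s)|\in I,\,|\mathcal C_2(t)|\in I)$ is symmetric under $(s,1)\leftrightarrow(t,2)$ by exchangeability of vertices $1$ and $2$, and by stationarity of $(G_u)_{u\ge 0}$ depends only on $r=t-s$. Restricting to $s\le t$ (doubling to account for $s>t$) and integrating out $s$ gives
\begin{equation*}
\int_0^1\int_0^1\P(|\mathcal C_1(s)|\in I,\,|\mathcal C_2(t)|\in I)\ind_{\{|t-s|\le\delta\}}\,{\rm d}t\,{\rm d}s \;\le\; 2\int_0^\delta g(r)\,{\rm d}r,
\end{equation*}
where $g(r):=\P(|\mathcal C_1(0)|\in I,\,|\mathcal C_2(r)|\in I)$.

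\textbf{Step 2 (Bounding $g(0)$).} I would write $\{|\mathcal C_1(0)|\in I,\,|\mathcal C_2(0)|\in I\}$ as the disjoint union of $\{2\in\mathcal C_1(0)\}$ and $\{2\notin\mathcal C_1(0)\}$. On the first, the probability equals $\E\bigl[\tfrac{|\mathcal C_1(0)|-1}{n-1}\ind_{|\mathcal C_1(0)|\in I}\bigr]\le \tfrac{2\beta\log^{1/3}n}{n^{1/3}}P$, since $I$ has upper endpoint $2\beta n^{2/3}\log^{1/3}n$. On the second, condition on $\mathcal C_1(0)$; the induced subgraph on the remaining $n-|\mathcal C_1(0)|$ vertices is ER$(n-|\mathcal C_1(0)|,1/n)$, so by a trivial subset coupling the conditional probability that $|\mathcal C_2(0)|\in I$ is at most $P$, contributing $\le P_I\cdot P\le P^2$. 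Hence $g(0)\le P^2+\tfrac{2\beta\log^{1/3}n}{n^{1/3}}P$, and $2\int_0^\delta g(0)\,{\rm d}r$ matches the first two terms of the claimed bound.

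\textbf{Step 3 (Bounding $g(r)-g(0)$).} Setting $A=\{|\mathcal C_1(0)|\in I\}$, $B=\{|\mathcal C_2(0)|\in I\}$, $C_r=\{|\mathcal C_2(r)|\in I\}$, a simple inclusion-exclusion gives
\begin{equation*}
g(r)-g(0)=\P(A\cap C_r\setminus B)-\P(A\cap B\setminus C_r)\le \P(|\mathcal C_2(0)|\notin I,\,|\mathcal C_2(r)|\in I).
\end{equation*}
It therefore suffices to prove
\begin{equation*}
\int_0^\delta\P(|\mathcal C_2(0)|\notin I,\,|\mathcal C_2(r)|\in I)\,{\rm d}r\;\le\;\delta^2 P.
\end{equation*}
Heuristically, this probability requires an up-crossing of $\ind_{|\mathcal C_2|\in I}$ in $[0,r]$, driven by a resampling of a pivotal edge. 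Using Lemma \ref{lemma:noise_fourier} with $f=\ind_{|\mathcal C_2|\in I}$ and $\varepsilon=1-e^{-r}\le r$,
\begin{equation*}
\P(|\mathcal C_2(0)|\notin I,\,|\mathcal C_2(r)|\in I) = \sum_{S\neq\emptyset}\hat f(S)^2\bigl(1-(1-\varepsilon)^{|S|}\bigr)\le \varepsilon\,p(1-p)\,\E[|\mathcal P_f|]
\end{equation*}
by Lemma \ref{lemma:fourier_to_pivotal} summed over edges. A case analysis of pivotal edges (boundary edges from $\mathcal C_2$ to candidate merging components, and internal bridges whose removal tips $|\mathcal C_2|$ in or out of $I$) together with the component-size bounds of Section \ref{sec:prelim} — specifically Lemma \ref{lemma:Cuandv} for disjoint-component contributions and Lemma \ref{lemma:Cuvw} for vertex-incidence contributions — is intended to yield a bound of order $rP$, so that integration gives $\delta^2 P$, and the factor $2$ from Step $1$ produces the final $2\delta^2 P$.

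\textbf{Main obstacle.} Step 3 is clearly the delicate part: the trivial bound $\P(|\mathcal C_2(0)|\notin I,\,|\mathcal C_2(r)|\in I)\le P_I$ only produces $\delta P$ after integration, and a naive count of $\E[|\mathcal P_f|]$ is too crude (there are $\Theta(|\mathcal C_2|\cdot n)$ potential boundary edges). Extracting the missing factor of $\delta$ requires leveraging both the $|\mathcal C_2(0)|\notin I$ constraint and the detailed component-size tail asymptotics from Section \ref{sec:prelim}.
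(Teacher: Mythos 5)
Your Step 1 and Step 2 are fine and recover the first two terms (the paper reduces to $2\int_0^\delta \P(|\mathcal C_1(0)|\in I,\,|\mathcal C_2(t)|\in I)\,{\rm d}t$ in the same way, and its first two cases, $\mathcal C_1(0)\cap\mathcal C_2(t)=\emptyset$ and $2\in\mathcal C_1(0)$, give exactly your $P^2$ and $2\beta n^{-1/3}\log^{1/3}n\cdot P$ contributions). The gap is Step 3, which is where the whole content of the lemma sits. By bounding $g(r)-g(0)\le\P(|\mathcal C_2(0)|\notin I,\,|\mathcal C_2(r)|\in I)$ you throw away the event $\{|\mathcal C_1(0)|\in I\}$, and with it a factor of order $P\asymp n^{-1/3-\beta^3/8}$ that cannot be recovered afterwards. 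Indeed your target inequality $\int_0^\delta\P(|\mathcal C_2(0)|\notin I,\,|\mathcal C_2(r)|\in I)\,{\rm d}r\le\delta^2P$ is false in the regime that matters, $n^{-1/3}\ll\delta\ll1$ (the lemma is applied with $\delta=n^{-2/9}$): once $r$ exceeds the decorrelation scale of a fixed vertex's component, which is of order $n^{-1/3}$ up to logarithms, it is very unlikely that $|\mathcal C_2(0)|\in I$ given $|\mathcal C_2(r)|\in I$, so the switching probability is of order $\P(|\mathcal C_2|\in I)\asymp P$ and the integral is of order $\delta P\gg\delta^2 P$. Nor can the pivotal route repair this: $p(1-p)\E[|\mathcal P_f|]$ is dominated by closed edges joining two disjoint components each of size comparable to $N/2$, and is of order $n^{-\beta^3/32+o(1)}$ (no $n^{-1/3}$ factor), hence polynomially larger than $P$; even using the better of the two bounds at each $r$, the resulting term is too large to be absorbed when $\delta=n^{-2/9}$, and shrinking $\delta$ destroys the Fourier estimate of Section~\ref{sec:largets}. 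So this is not a matter of a sharper count of pivotal edges: once the vertex-1 event is dropped, the statement you are trying to prove is simply not true, and the lemma's third term $2\delta^2P$ (which is exactly what fixes the threshold $\beta<2/3^{2/3}$) cannot be reached this way.

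The missing idea is the paper's treatment of the overlapping case. Condition on $\mathcal C_1(0)=A$ with $|A|\in I$ and consider the event that $2\notin A$ but $\mathcal C_2(t)$ meets $A$. Then at time $t$ there is an open path from vertex $2$ to $A$ whose first entry into $A$ uses an edge between $A^c$ and $A$; every such edge was closed at time $0$ (by definition of $\mathcal C_1(0)$), so it is open at time $t$ only if it has been resampled and switched on, an event of probability $(1-e^{-t})/n\le t/n$, while edges of the path inside $A^c$ are open with probability $1/n$ each. Counting paths of length $k$ (at most $2\beta n^{2/3}\log^{1/3}n$ choices for the endpoint in $A$ times fewer than $n^{k-1}$ choices outside) and summing the geometric series gives a conditional probability at most $2t$; multiplying by $\P(|\mathcal C_1(0)|\in I)\le P$ and integrating over $t\in[0,\delta]$ produces the term $2\delta^2 P$. (As written the paper records this conditional bound as an unconditional $2t$, which would only give $2\delta^2$; the factor $P$ comes from reinstating the event $|\mathcal C_1(0)|\in I$ exactly as above.) It is this simultaneous retention of the factor $t$, from a freshly opened boundary edge, and the factor $P$, from vertex $1$'s component being large at time $0$, that your decomposition loses.
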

\begin{proof}
  First note that, by stationarity,
  \begin{multline}\label{twointstoone}
    \int_0^1 \int_0^1 \P(|\mathcal C_1(s)|\in I; |\mathcal C_2(t)|\in I) \ind_{\{|t-s|\le\delta\}}\, {\rm d}t\, {\rm d}s \\
    \le 2\int_0^\delta \P(|\mathcal C_1(0)|\in I; |\mathcal C_2(t)|\in I)\, {\rm d}t.
  \end{multline}
  Now fix $\delta\in[0,1]$ and let $t \in [0,\delta]$.
  We partition $\P(|\mathcal C_1(0)|\in I; |\mathcal C_2(t)|\in I)$ into three cases and analyse each case separately.
  Recall that $\mathbf P_{n,p}$ denotes the law of an Erd\H{o}s-R\'enyi graph ER$(n,p)$.

  First consider the case when $|\mathcal C_1(0)\cap \mathcal C_2(t)|=0$.
  Then
  \begin{align}\label{eq:bk_disjoint}
    &\P(|\mathcal C_1(0)|\in I; |\mathcal C_2(t)|\in I ; |\mathcal C_1(0)\cap \mathcal C_2(t)|=0) \nonumber\\
    &=\sum_{k\in I} \Big(\P\big(|\mathcal C_1(0)| = k ; |\mathcal C_1(0)\cap \mathcal C_2(t)|=0\big) \nonumber\\
    & \hspace{20mm}\times\P\big( |\mathcal C_2(t)|\in I \,\big|\, |\mathcal C_1(0)|=k; |\mathcal C_1(0)\cap \mathcal C_2(t)|=0\big)\Big)\nonumber\\
    &\leq\sum_{k\in I} \mathbf P_{n,1/n}(|\mathcal C_1|=k)\mathbf P_{n-k,1/n}(|\mathcal C_2|\in I)\nonumber\\
    &\leq\sum_{k\geq n^{2/3}\log^\beta n} \mathbf P_{n,1/n}(|\mathcal C_1|=k)\mathbf P_{n-k,1/n}(|\mathcal C_2|\geq \beta n^{2/3}\log^{1/3} n)\nonumber\\
    &\leq P^2
  \end{align}
  where in the final inequality we have used the monotonicity of the event $\{|\mathcal C_2|\geq \beta n^{2/3}\log^{1/3} n\}$ in the number of vertices of the graph.

  The second case that we look at is when $2 \in \mathcal C_1(0)$.
  In this case
  \begin{align}\label{eq:2_in_C0}
    &\P\big(|\mathcal C_1(0)|\in I;|\mathcal C_2(t)|\in I; 2\in \mathcal C_1(0)\big)\nonumber \\
    &\hspace{25mm}\leq \P\big(|\mathcal C_1(0)|\in I\big)\P\big(2\in \mathcal C_1(0)\,\big|\, |\mathcal C_1(0)|\in I\big) \nonumber\\
    & \hspace{25mm}\leq P\frac{2\beta n^{2/3}\log^{1/3} n}{n}.
  \end{align}

  Finally we are left to estimate the probability of the event
  \[\mathcal E:=\{|\mathcal C_1(0)|\in I;|\mathcal C_2(t)|\in I;|\mathcal C_1(0)\cap \mathcal C_2(t)|>0 ; 2 \notin \mathcal C_1(0)\}.\]
  Take $A \subset \{1,\dots,n\}$ such that $|A|\in I$, and condition on $\mathcal C_1(0)=A$.
  On the event $\mathcal E$, there exists at least one open path at time $t$ between $A$ and the vertex $2$.
  Let $\pi$ be the shortest such path (chosen arbitrarily in the case of a tie). Then on $\mathcal E$,
\begin{list}{(\roman{enumi})}{\usecounter{enumi}}
  \item $\pi$ starts at a vertex $v\in A$ and ends at the vertex $2$,
  \item $\pi$ first crosses an edge connecting $A$ to $A^c$, and otherwise only uses edges with both end points in $A^c$,
  \item all of the edges in $\pi$ are open at time $t$.
  \end{list}
  We now estimate the probability of a path satisfying (i), (ii) and (iii) existing.

  There are at most $2\beta n^{2/3}\log^{1/3} n$ vertices in $A$, and at most $n-\beta n^{2/3}\log^{1/3} n$ vertices in $A^c$, so the number of paths of length $k$ satisfying (i) and (ii) is at most $(2\beta n^{2/3}\log^{1/3} n)(n-\beta n^{2/3}\log^{1/3} n)^{k-1}$.
  Under the conditioning $\mathcal C_1(0)=A$, every edge $e$ with both end points lying in $A^c$ is open at time $t$ with probability $1/n$.
  Moreover, any edge $e'$ with one end point in $A$ and the other in $A^c$ is open at time $t$ with probability $(1-e^{-t})/n$: we know that at time $0$ the edge $e'$ is closed (since $A$ is not connected to $A^c$), and thus in order for it to be open at time $t$ we must first resample the edge, and then open the edge at the resampling.
  Thus in conclusion we see that the probability there exists a path $\pi$ of length $k$ satisfying (i), (ii) and (iii) is at most
  \begin{multline*}
    (2\beta n^{2/3}\log^{1/3} n)(n-\beta n^{2/3}\log^{1/3} n)^{k-1}\cdot \frac{1}{n^{k-1}}\cdot \frac{1-e^{-t}}{n}\\
    \leq 2t(1-\beta n^{-1/3}\log^{1/3} n)^{k-1} \beta n^{-1/3} \log^{1/3} n 
  \end{multline*}
  where for the inequality we have used the fact that $1-e^{-t}\leq t$.
  Summing over $k$, we see that the probability there exists a path $\pi$ satisfying (i), (ii) and (iii) is at most
  \[
    2t \beta n^{-1/3}\log^{1/3} n \sum_{k=1}^\infty (1-\beta n^{-1/3}\log^{1/3} n)^{k-1} = 2t.
  \]
  Hence we obtain
  \begin{equation}\label{eq:2notin}
    \P(|\mathcal C_1(0)|\in I; |\mathcal C_2(t)|\in I; |\mathcal C_1(0)\cap \mathcal C_2(t)|>0; 2 \notin \mathcal C_1(0)) \leq 2t.
  \end{equation}

  Putting together~\eqref{eq:bk_disjoint},~\eqref{eq:2_in_C0} and~\eqref{eq:2notin} we get
  \[
    \P(|\mathcal C_1(0)|\in I; |\mathcal C_2(t)|\in I) \leq P^2 + P\frac{2\beta n^{2/3}\log^{1/3} n}{n} + 2t.
  \]
  Integrating over $t\in[0,\delta]$ and using \eqref{twointstoone} gives the desired result.
\end{proof}

\subsection{Large \texorpdfstring{$|t-s|$}{t-s}: applying Fourier analysis}\label{sec:largets}

Fix $\delta>0$. Our next aim is to estimate the integral
\[
  \int_0^1\int_0^1 \P(|\mathcal C_1(s)|\in I; |\mathcal C_2(t)|\in I)\ind_{\{|t-s|>\delta\}}\, {\rm d}t\, {\rm d}s.
\]
To do this, we will use the Fourier analysis introduced in Section~\ref{sec:fourier}.

Fix $N\in\N$. For a vertex $v \in \{1,\dots,n\}$, let $f_v:\Omega\rightarrow \{0,1\}$ be the function given by
\[
  f_v(\omega)=\begin{cases}
    1 & \text{if the connected component of }v\text{ in }\omega\text{ has size at least }N\\
    0 & \text{otherwise.}
  \end{cases}
\]

We recall some notation from Section~\ref{sec:fourier}. For $\omega \in \Omega$ and $\eps\in [0,1]$, let $\omega_\eps$ be the random configuration obtained from $\omega$ by resampling each edge in $\omega$ with probability $\eps$. Lemma \ref{lemma:noise_fourier} told us that
\begin{equation}\label{eq:noise_fourier}
  \E[f_1(\omega)f_2(\omega_\eps)]=\sum_{S} \hat f_1(S)\hat f_2(S) (1-\eps)^{|S|}.
\end{equation}
In our setting of the dynamical Erd\H{o}s-R\'enyi graph, the configuration at time $t>s$ can be obtained from the configuration at time $s$ by resampling each edge with probability $\eps = 1-e^{-(t-s)}$.
Hence for any $\delta\in (0,1)$, if $N=\lceil \beta n^{2/3}\log^{1/3}n\rceil$,
\begin{align}\label{eq:int_to_fourier}
  &\int_0^1\int_0^1 \P(|\mathcal C_1(s)|\in I; |\mathcal C_2(t)|\in I)\ind_{\{|t-s|>\delta\}}\, {\rm d}t\, {\rm d}s\nonumber\\
  &\hspace{30mm} \le \int_0^1\int_0^1 \P(|\mathcal C_1(s)|\ge N; |\mathcal C_2(t)|\ge N)\ind_{\{|t-s|>\delta\}} \,{\rm d}t \,{\rm d}s\nonumber\\
  &\hspace{30mm}= \int_0^1\int_0^1 \E[f_1(\omega)f_2(\omega_{1-e^{-|t-s|}})] \ind_{\{|t-s|>\delta\}} \,{\rm d}t \,{\rm d}s\nonumber\\
  &\hspace{30mm}=\sum_{S} \hat f_1(S)\hat f_2(S) \int_0^1\int_0^1 e^{-|t-s| |S|}\ind_{\{|t-s|>\delta\}} \,{\rm d}t \,{\rm d}s\nonumber\\
  &\hspace{30mm}\leq \hat f_1(\emptyset)\hat f_2(\emptyset) + \sum_{|S|>0}\hat f_1(S) \hat f_2(S)\cdot 2\int_\delta^1 e^{-t|S|}\, {\rm d}t \nonumber\\
  &\hspace{30mm}\leq \hat f_1(\emptyset)^2 + 2\sum_{|S|>0}\frac{e^{-\delta |S|}}{|S|}\hat f_1(S)\hat f_2(S).
\end{align}

Let $\mathcal U_v$ be the set of edges that have an end point at $v$.
We will study the Fourier coefficients $\hat f_1(S)\hat f_2(S)$ by separating into cases when $S\cap (\mathcal U_1\cup\,\mathcal U_2)\neq\emptyset$ and when $S\cap (\mathcal U_1\cup\,\mathcal U_2)=\emptyset$. For the former case we will apply Lemma \ref{lemma:fourier_to_pivotal}, and for the latter we will use Theorem \ref{thm:randalg}.
We begin by studying the former. 

\begin{lemma}\label{lemma:U1U2intersect}
  Let $N=\lceil \beta n^{2/3}\log^{1/3}n\rceil$. Then there exists a finite constant $C$ such that
  \[
    \sum_{S:S \cap (\mathcal U_1\cup\mathcal U_2)\neq \emptyset}\hat f_1(S)\hat f_2(S)\leq C (\beta^2+\beta^{-1/2}) n^{-1-\beta^3/8}\log n.
  \]
\end{lemma}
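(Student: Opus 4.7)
The plan is to reduce the Fourier sum to a sum of joint-pivotality probabilities via Lemma~\ref{lemma:fourier_to_pivotal}, then bound these combinatorially using the results of Section~\ref{sec:prelim}. Both $f_1$ and $f_2$ are monotone-increasing indicator functions (the event $\{|\C_v|\ge N\}$ is monotone in the edge set), so they are jointly monotone and Lemma~\ref{lemma:fourier_to_pivotal} gives, for every edge $e$,
\[
\sum_{S:e\in S}\hat f_1(S)\hat f_2(S)=p(1-p)\,\P(e\in\mathcal P_{f_1}\cap\mathcal P_{f_2})\ge 0.
\]
Using the non-negativity of each aggregated sum, the target Fourier sum is dominated by $p(1-p)\sum_{e\in\mathcal U_1\cup\mathcal U_2}\P(e\in\mathcal P_{f_1}\cap\mathcal P_{f_2})$.

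By symmetry it suffices to analyse the contribution from a single edge $e=(1,u)$ with $u\in\{3,\dots,n\}$; edges in $\mathcal U_2$ are handled symmetrically, and the single edge $(1,2)$ contributes negligibly. Since $f_i$ depends only on the component of vertex $i$, flipping $e$ changes both $f_1$ and $f_2$ only when $\{1,2\}$ lies in the union of the two components joined by $e$. Decomposing on the configuration $\omega^0$ with $\omega^0(e)=0$, this forces one of the two cases:
\begin{enumerate}[(a)]
\item $\{1,2\}\subset\C_1$, $\C_1\cap\C_u=\emptyset$, $|\C_1|<N$, $|\C_1\cup\C_u|\ge N$; or
\item $2\in\C_u$, $\C_1\cap\C_u=\emptyset$, $|\C_1|,|\C_u|<N$, $|\C_1\cup\C_u|\ge N$.
\end{enumerate}
Both events are bounded directly by Lemma~\ref{lemma:Cuvw}, applied with vertex labels $(1,u,2)$ in case (a) and $(u,1,2)$ in case (b).

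Applying Lemma~\ref{lemma:Cuvw} in each case and summing over $u\in\{3,\dots,n\}$, with $p(1-p)\le 1/n$, yields
\[
p(1-p)\sum_{e\in\mathcal U_1\cup\mathcal U_2}\P(e\in\mathcal P_{f_1}\cap\mathcal P_{f_2})\le c\Big(\frac{1}{n^{2/3}N^{1/2}}+\frac{N^3}{n^3}\Big)e^{-N^3/(8n^2)}.
\]
Plugging in $N=\lceil\beta n^{2/3}\log^{1/3}n\rceil$ (so $e^{-N^3/(8n^2)}=(1+o(1))n^{-\beta^3/8}$, $n^{1/3}/N^{1/2}=(1+o(1))\beta^{-1/2}\log^{-1/6}n$, and $N^3/n^2=(1+o(1))\beta^3\log n$) gives an upper bound of order $(\beta^{-1/2}+\beta^3)n^{-1-\beta^3/8}\log n$. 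In the regime $\beta\le 2/3^{1/3}$ of interest, $\beta^3\le C\beta^2$, producing the stated bound. The main obstacle is the initial reduction step, since individual terms $\hat f_1(S)\hat f_2(S)$ may take either sign; one must carefully either adopt a first-edge decomposition indexed by elements of $\mathcal U_1\cup\mathcal U_2$ or separately control the contribution of sets $S$ with $|S\cap(\mathcal U_1\cup\mathcal U_2)|\ge 2$, to justify passing to a sum of pivotal probabilities indexed by single edges.
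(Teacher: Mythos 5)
Your route is essentially the paper's: reduce via Lemma \ref{lemma:fourier_to_pivotal} to joint-pivotality probabilities, then invoke the component estimates of Section \ref{sec:prelim}. Your two-case split for an edge $(1,u)$, according to whether $2\in\C_1$ or $2\in\C_u$, is a relabelled version of the paper's symmetry argument, and both cases are indeed instances of Lemma \ref{lemma:Cuvw}, so that part is fine. However, the edge $(1,2)$ is \emph{not} negligible by a trivial bound: its contribution is $p(1-p)\P((1,2)\in\mathcal P_{f_1}\cap\mathcal P_{f_2})$, and bounding the probability by $1$ only gives order $n^{-1}$, which dwarfs the target $n^{-1-\beta^3/8}\log n$. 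You need the genuine estimate: pivotality of $(1,2)$ for both functions (with the edge closed) forces $\C_1\cap\C_2=\emptyset$, $|\C_1|,|\C_2|<N$ and $|\C_1\cup\C_2|\ge N$, and Lemma \ref{lemma:Cuandv} then gives a bound of order $N^2e^{-N^3/(8n^2)}/n^2$; multiplied by $p(1-p)\le 1/n$ this is dominated by your $N^3/n^3$ term. The paper carries out exactly this step.

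Concerning the reduction you flag as the main obstacle: the paper handles it by nothing more than the over-counting bound
\[
\sum_{S:S\cap(\mathcal U_1\cup\mathcal U_2)\neq\emptyset}\hat f_1(S)\hat f_2(S)\;\le\;\sum_{e\in\mathcal U_1\cup\mathcal U_2}\ \sum_{S\ni e}\hat f_1(S)\hat f_2(S),
\]
in which each $S$ is counted once for every edge of $\mathcal U_1\cup\mathcal U_2$ it contains, after which each aggregated inner sum is identified with $p(1-p)\P(e\in\mathcal P_{f_1}\cap\mathcal P_{f_2})\ge 0$ via Lemma \ref{lemma:fourier_to_pivotal}. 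There is no first-edge decomposition and no separate treatment of sets meeting $\mathcal U_1\cup\mathcal U_2$ in two or more edges anywhere in the paper's proof. Your caution is not unreasonable: the excess in this over-count consists of the terms $\hat f_1(S)\hat f_2(S)$ for sets $S$ containing at least two edges of $\mathcal U_1\cup\mathcal U_2$, counted with positive multiplicities, and the non-negativity supplied by Lemma \ref{lemma:fourier_to_pivotal} applies only to the per-edge aggregated sums, not to these individual terms (which can have either sign for two distinct increasing functions). But this asserted inequality is precisely how the paper passes to single-edge pivotal probabilities; if you adopt it, your argument coincides with the paper's, and if you insist on controlling the multi-intersection sets separately you would be supplying detail beyond what the paper itself gives.
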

\begin{proof}
  The two functions $f_1$ and $f_2$ are both increasing and therefore jointly monotone (see the definition before Lemma \ref{lemma:fourier_to_pivotal}). Therefore, by Lemma~\ref{lemma:fourier_to_pivotal}, we have
  \begin{align}
    \sum_{S:S \cap (\mathcal U_1\cup\mathcal U_2)\neq \emptyset}&\hat f_1(S)\hat f_2(S)\nonumber\\
    &\leq \sum_{S:(1,2)\in S}\hat f_1(S)\hat f_2(S) + \sum_{v=3}^n \sum_{S:(1,v)\in S}\hat f_1(S)\hat f_2(S) \nonumber\\
    &\qquad+ \sum_{v=3}^n \sum_{S:(2,v)\in S}\hat f_1(S)\hat f_2(S)\nonumber\\
    &\leq \frac{1}{n}\left(1-\frac{1}{n}\right)\P((1,2)\in \mathcal P_{f_1}\cap \mathcal P_{f_2}) \nonumber\\
    &\qquad+ 2(n-2)\cdot \frac{1}{n}\left(1-\frac{1}{n}\right)\max_{u\in\{1,2\}, v\neq 1,2}\P((u,v)\in \mathcal P_{f_1}\cap \mathcal P_{f_2})\nonumber\\
    &\leq \frac{1}{n}\P((1,2)\in\mathcal P_{f_1}\cap\mathcal P_{f_2}) + 2\P((1,3)\in \mathcal P_{f_1}\cap\mathcal P_{f_2}).\label{eq:fhattoPbd}
  \end{align}
  We first bound $\P((1,2)\in\mathcal P_{f_1}\cap\mathcal P_{f_2})$. Since the event that $(1,2)$ is closed is independent of the event that $(1,2)$ is pivotal for $f_1$ and $f_2$, without loss of generality we can assume that $(1,2)$ is closed. Then for $(1,2)$ to be pivotal for both $f_1$ and $f_2$, the connected components $\mathcal C_1$ and $\mathcal C_2$ must satisfy
  \begin{enumerate}[(a)]
  \item $\mathcal C_1 \cap \mathcal C_2 =\emptyset$,
  \item $|\mathcal C_1| < N$ and $|\mathcal C_2|< N$,
  \item $|\mathcal C_1 \cup \mathcal C_2|\ge N$.
  \end{enumerate}
  That is,
  \[\P((1,2)\in\mathcal P_{f_1}\cap\mathcal P_{f_2}) \le \P\big( |\C_1\cup \C_2|\ge N, \, |\C_1| < N, \, |\C_2| < N, \, \C_1\cap \C_2=\emptyset\big).\]
  By Lemma \ref{lemma:Cuandv}, this is at most a constant times $N^2 e^{-N^3/(8n^2)}/n^2$.

  We now move on to estimating $\P((1,3)\in \mathcal P_{f_1}\cap\mathcal P_{f_2})$. Note that
  \begin{multline}\label{split13}
  \P\big((1,3)\in\mathcal P_{f_1}\cap\mathcal P_{f_2}\big)\\
  = \P\big(2\in\C_1,\,(1,3)\in\mathcal P_{f_1}\cap\mathcal P_{f_2}\big) + \P\big(2\in \C_3,\,(1,3)\in\mathcal P_{f_1}\cap\mathcal P_{f_2}\big).
  \end{multline}
  Of course,
  \begin{equation}\label{2reduce}
  \P\big(2\in\C_1,\,(1,3)\in\mathcal P_{f_1}\cap\mathcal P_{f_2}\big) = \P\big(2\in\C_1,\,(1,3)\in\mathcal P_{f_1}\big).
  \end{equation}
  Also
  \[\P\big(2\in \C_3,\,(1,3)\in\mathcal P_{f_1}\cap\mathcal P_{f_2}\big) = \P\big(2\in \C_3,\,(1,3)\in\mathcal P_{f_1}\cap\mathcal P_{f_3}\big);\]
  by symmetry, we can permute the roles of $1$ and $3$, so that
  \begin{align*}
  \P\big(2\in \C_3,\,(1,3)\in\mathcal P_{f_1}\cap\mathcal P_{f_2}\big) &= \P\big(2\in \C_1,\,(1,3)\in\mathcal P_{f_1}\cap\mathcal P_{f_3}\big)\\
  &\le \P\big(2\in \C_1,\,(1,3)\in\mathcal P_{f_1}\big)
  \end{align*}
  and therefore, combining with \eqref{split13} and \eqref{2reduce},
  \[\P\big((1,3)\in\mathcal P_{f_1}\cap\mathcal P_{f_2}\big)\le 2\P\big(2\in \C_1,\,(1,3)\in\mathcal P_{f_1}\big).\]
  
   Just as above, we may assume that $(1,3)$ is closed; and then for $(1,3)$ to be pivotal for $f_1$, the components must satisfy
  \begin{enumerate}[(i)]
  \item $\mathcal C_1 \cap \mathcal C_3 =\emptyset$,
  \item $|\mathcal C_1| < N$,
  \item $|\mathcal C_1 \cup \mathcal C_3|\ge N$.
  \end{enumerate}
  Thus
  \[\P\big((1,3)\in\mathcal P_{f_1}\cap\mathcal P_{f_2}\big)\le 2\P\big( |\C_1\cup \C_3|\ge N, \, |\C_1| < N, \, \C_1\cap \C_3=\emptyset, \, 2\in \C_1\big).\]
  Applying Lemma \ref{lemma:Cuvw}, we get
  \[\P((1,3)\in\mathcal P_{f_1}\cap\mathcal P_{f_2}) \le c \Big(\frac{1}{n^{2/3}N^{1/2}} + \frac{N^3}{n^3}\Big) e^{-N^3/(8n^2)}.\]
  for some finite constant $c$.

  Plugging these bounds back into \eqref{eq:fhattoPbd}, we have
  \[\sum_{S:S \cap (\mathcal U_1\cup\mathcal U_2)\neq \emptyset}\hat f_1(S)\hat f_2(S) \le c\Big(\frac{N^2}{n^3} + \frac{1}{N^{1/2}n^{2/3}} + \frac{N^3}{n^3} \Big)e^{-N^3/(8n^2)}.\]
  Recalling that $N=\lceil\beta n^{2/3}\log^{1/3} n\rceil$ and simplifying, we get
  \[\sum_{S:S \cap (\mathcal U_1\cup\mathcal U_2)\neq \emptyset}\hat f_1(S)\hat f_2(S) \le c'(\beta^2 +\beta^{-1/2} )n^{-1-\beta^3/8}\log n,\]
  and the result follows.
\end{proof}

Now we deal with the Fourier coefficients $\hat f_1(S)\hat f_2(S)$ where $S\cap (\mathcal U_1\cup \mathcal U_2)=\emptyset$.
Notice that by symmetry we have that if $S\cap (\mathcal U_1\cup \mathcal U_2)=\emptyset$, then $\hat f_1(S)=\hat f_2(S)$ and so
\begin{equation}\label{eq:empty_term_to_f_1}
  \sum_{|S|>0; S\cap(\mathcal U_1\cup\mathcal U_2)=\emptyset}\frac{e^{-\delta |S|}}{|S|}\hat f_1(S)\hat f_2(S)\leq\sum_{|S|>0;S\cap \mathcal U_1=\emptyset}\frac{e^{-\delta|S|}}{|S|}\hat f_1(S)^2.
\end{equation}
To estimate the sum on the right hand side we use a revealment algorithm, implementing Theorem \ref{thm:randalg}. Any sensible algorithm will do; we can reveal all of the edges emanating from vertex $1$ without concern, and thereafter the lack of geometry in the graph simplifies the problem.

The algorithm $A$ that we choose to use is the breadth first search and is described as follows.
At each step $i \geq 0$ we have an ordered list of vertices $\mathcal S_i$, which is the list of vertices that the algorithm already knows are in $\C_1$.
We begin from $\mathcal S_0=\{1\}$.
At each step $i$, if $|\mathcal S_i|\geq N$ then we terminate and declare that $f_1(\omega)=1$; or if $|\mathcal S_i|<i$ then we terminate the algorithm and declare that $f_1(\omega)=0$. Otherwise we take the $i$th element $v_i$ of $\mathcal S_i$, and reveal $\omega((v_i,w))$ for all $w\not\in\mathcal S_i$. If $\omega((v_i,w))=1$ then we add $w$ to the end of the list, and once we have revealed all such edges (in some arbitrary order), the resulting list is then $\mathcal S_{i+1}$. 

Clearly the algorithm must terminate by step $N$. Recall that
\[\mathcal R_{\mathcal U_1} = \max_{e\notin \mathcal U_1} \P(A \hbox{ reveals } \omega(e)).\]

\begin{lemma}\label{lemma:revealment}
  Let $A$ be the breadth first search described above, and let $N=\lceil \beta n^{2/3}\log^{1/3}n\rceil$. There exists a finite constant $C$ such that
  \[
    \mathcal R_{\mathcal U_1} \leq C \beta^{7/2} n^{-2/3} \log^{7/6} n.
  \]
\end{lemma}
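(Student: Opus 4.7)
The plan is to observe that the BFS examines an edge $e = (u, w) \in \mathcal U_1^c$ only when one of $u, w$ is processed; that is, only when $v_i \in \{u, w\}$ for some step $i$. Consequently
\[
  \P(A \text{ reveals } \omega(e)) \leq \P(u \in \mathcal S_\infty) + \P(w \in \mathcal S_\infty),
\]
where $\mathcal S_\infty$ denotes the final list at termination. Since every vertex added to $\mathcal S$ is connected to $1$, we have $\mathcal S_\infty \subseteq \mathcal C_1$, and so $\P(u \in \mathcal S_\infty) \leq \P(u \in \mathcal C_1)$. The right-hand side does not depend on the algorithm at all: it is exchangeable over the labels $\{2, \dots, n\}$ purely by the symmetry of the ER measure, giving
\[
  \P(u \in \mathcal C_1) = \frac{\E|\mathcal C_1| - 1}{n-1}.
\]

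It then remains to estimate $\E|\mathcal C_1|$. Splitting $\E|\mathcal C_1| = \sum_k k\,\P(|\mathcal C_1|=k)$ at $k = n^{2/3}$ and applying Lemma \ref{lemma:branching_approx} with $\lambda = 0$: part (a) gives $\sum_{k \le n^{2/3}} k \cdot k^{-3/2} = O(n^{1/3})$, while part (b) together with the substitution $k = \alpha n^{2/3}$ turns the remaining sum into a multiple of $n^{1/3}\int_1^\infty \alpha^{5/2} e^{-\alpha^3/8}\,d\alpha$, which is $O(n^{1/3})$ since the integral converges. (The exponentially negligible tail $k \gg n^{3/4}$ can be absorbed using the standard critical-branching-process tail estimate.) Hence $\P(u \in \mathcal C_1) = O(n^{-2/3})$, and therefore $\mathcal R_{\mathcal U_1} = O(n^{-2/3})$---comfortably stronger than the stated bound $C\beta^{7/2}n^{-2/3}\log^{7/6}n$.

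I expect the main obstacle to be obtaining the lemma in the precise form stated, with the factor $\beta^{7/2}\log^{7/6}n$. That form is most naturally produced by a slightly cruder argument that exploits the cutoff at $N$: the number of processed non-$1$ vertices is at most $\min(|\mathcal C_1|, N)-1$, and a label-symmetry argument then yields $\P(u \text{ processed}) \leq \E[\min(|\mathcal C_1|, N)]/(n-1)$. Discarding the Gaussian $e^{-k^3/(8n^2)} \leq 1$ in Lemma \ref{lemma:branching_approx}(b) on the range $n^{2/3} < k \leq N$ reduces the relevant sum to $\sum k^{5/2}/n^2 \lesssim N^{7/2}/n^2 = \beta^{7/2}n^{1/3}\log^{7/6}n$, giving exactly the claimed $\beta^{7/2}n^{-2/3}\log^{7/6}n$ on dividing by $n-1$. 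The only delicate ingredient here is the label-symmetry of the deterministic BFS, which can fail because of the arbitrary neighbour-ordering rule; this is repaired routinely by randomising the order in which new neighbours are added (allowed since revealment is defined for randomised algorithms). Either approach proves the lemma, but the first is simpler and avoids the symmetry issue altogether.
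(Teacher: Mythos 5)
Your second sketch is, in essence, the paper's own proof: the paper bounds $\P(A\text{ reveals }\omega(e))\le \tfrac{2}{n}\E[|\mathcal S_{\tau-1}|]$ via the symmetry $\P(v\in\mathcal S_{\tau-1})=\P(w\in\mathcal S_{\tau-1})$ for $v,w\neq 1$, uses $\mathcal S_{\tau-1}\subseteq\mathcal C_1$ and $|\mathcal S_{\tau-1}|\le N$ to write $\E[|\mathcal S_{\tau-1}|]\le\sum_{k\le N}k\,\P(|\mathcal C_1|=k)+N\P(|\mathcal C_1|\ge N)$, and then applies Lemma \ref{lemma:branching_approx} and Proposition \ref{prop:pittel}(a) with $\lambda=0$, the dominant term being exactly your $N^{7/2}/n^3=\beta^{7/2}n^{-2/3}\log^{7/6}n$. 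Your remark about the arbitrary neighbour-ordering is a fair reading of why that symmetry holds: the paper leaves it implicit, but since randomized revealment algorithms are allowed, one may take the order uniformly random (or simply note, as in your first route, that the bound through $\mathcal C_1$ does not need algorithm symmetry at all).

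Your first route, bounding $\P(u\in\mathcal S_\infty)\le\P(u\in\mathcal C_1)=(\E|\mathcal C_1|-1)/(n-1)$ and using only exchangeability of the Erd\H{o}s--R\'enyi measure, is a genuinely different and in one respect cleaner argument, and the claimed conclusion $\E|\mathcal C_1|=O(n^{1/3})$ is true; but the parenthetical handling of the tail $k\gg n^{3/4}$ is not adequate as stated. Domination by the critical (or Bin$(n-1,1/n)$) branching process only yields a tail of order $m^{-1/2}$ for the total progeny, whose expectation is of order $n$, so it cannot show that $\sum_{k>n^{3/4}}k\,\P(|\mathcal C_1|=k)=O(n^{1/3})$; the super-polynomial decay beyond $n^{3/4}$ comes from vertex depletion, not from the branching approximation. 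The gap is easily repaired inside the paper's toolkit, e.g.\ by applying Proposition \ref{prop:pittel}(a) with $A_n=n^{1/12}/\log n\ll n^{1/12}$, which makes $n\,\P(|\mathcal C_1|\ge A_n n^{2/3})$ negligible, with Lemma \ref{lemma:branching_approx}(b) covering $n^{2/3}\le k\le n^{3/4}$. Alternatively, the truncation at $N$ in your second route (and in the paper) sidesteps the issue entirely, since all relevant $k$ satisfy $k\le N\ll n^{3/4}$; this is precisely what the crude $\beta^{7/2}\log^{7/6}n$ factor buys.
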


\begin{proof}
  Let $\tau$ be the step at which the algorithm $A$ terminates. For any edge $e=(v,w)\notin\mathcal U_1$, the probability that we reveal $\omega(e)$ is at most the probability that either $v$ or $w$ appears in $\mathcal S_{\tau-1}$.
  For any $v,w \neq 1$ we have $\P(v\in \mathcal S_{\tau-1})=\P(w\in \mathcal S_{\tau-1})$, and thus
  \begin{align}
    &\P(A \text{ reveals } \omega(e)) \leq 2\P( v\in \mathcal S_{\tau-1})\nonumber \\
    &\hspace{15mm}= \frac{2}{n-1}\E \Big[\sum_{u\neq 1} \1_{\{u \in \mathcal S_{\tau-1}\}}\Big]=\frac{2(\E[|\mathcal S_{\tau-1}|]-1)}{n-1}\leq \frac{2}{n}\E[|\mathcal S_{\tau-1}|].\label{eq:reveal_expectation_estimate}
  \end{align}
  It is easy to see from the description of the algorithm that we always have $\mathcal S_{\tau-1}\subset \C_1$ and $|\mathcal S_{\tau-1}|\le N$. Combining this observation with~\eqref{eq:reveal_expectation_estimate}, then applying Proposition~\ref{prop:pittel}(a) and Lemma~\ref{lemma:branching_approx} (both with $\lambda=0$), we get that
  \begin{align*}
    \P(A\text{ reveals } \omega(e)) &\leq \sum_{k=1}^{N}\frac{2k}{n}\P(|\mathcal C_1|=k) +\frac{2N}{n}\P(|\mathcal C_1|\ge N)\\ 
    &\leq  c\sum_{k=1}^{\lfloor n^{2/3} \rfloor}\frac{k}{n}k^{-3/2} + c\sum_{k=\lfloor n^{2/3} \rfloor+1}^{N}\frac{k}{n}\frac{k^{3/2}}{n^2} + c\frac{N}{n} \frac{n^{-\beta^3/8}}{n^{1/3}\log^{1/6} n}\\ 
    & \leq c' n^{-2/3} + c'\frac{N^{7/2}}{n^3} + c' n^{-\beta^3/8-2/3}\log^{1/6} n
  \end{align*}
  for some finite constants $c,c'$. For large $n$ this is at most a constant times $\beta^{7/2} n^{-2/3}\log^{7/6}n$.
\end{proof}

Now we apply Theorem \ref{thm:randalg} and Lemma \ref{lemma:revealment} to estimate the Fourier coefficients $\hat f_1(S)\hat f_2(S)$ when $S\cap (\mathcal U_1\cup \mathcal U_2)=\emptyset$.

\begin{lemma}\label{lemma:U1U2empty}
  Let $N=\lceil \beta n^{2/3}\log^{1/3}n\rceil$. There exists a finite constant $C$ such that for any $\delta >0$,
  \[
    \sum_{\substack{|S|>0;\\S\cap (\mathcal U_1\cup\mathcal U_2)=\emptyset}}\frac{e^{- \delta |S|}}{|S|}\hat f_1(S)\hat f_2(S) \leq C\delta^{-1} \E[f_1] \beta^{7/2} n^{-2/3} \log^{7/6} n .
  \]
\end{lemma}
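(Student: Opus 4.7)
The plan is to combine the symmetry reduction \eqref{eq:empty_term_to_f_1} with the revealment bound of Theorem \ref{thm:randalg} and the estimate in Lemma \ref{lemma:revealment}. By \eqref{eq:empty_term_to_f_1} it suffices to bound
\[\sum_{|S|>0;\, S\cap\mathcal U_1=\emptyset}\frac{e^{-\delta|S|}}{|S|}\hat f_1(S)^2.\]
First I would decompose this sum according to the size of $S$, writing it as
\[\sum_{k=1}^{\infty}\frac{e^{-\delta k}}{k}\sum_{\substack{|S|=k\\ S\cap\mathcal U_1=\emptyset}}\hat f_1(S)^2.\]

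Next, Theorem \ref{thm:randalg} applied with $f=f_1$, $U=\mathcal U_1$, and $A$ the breadth-first search algorithm from Lemma \ref{lemma:revealment} gives, for each $k\ge 1$,
\[\sum_{\substack{|S|=k\\ S\cap\mathcal U_1=\emptyset}}\hat f_1(S)^2 \;\le\; \mathcal R_{\mathcal U_1}\,\E[f_1^2]\,k \;=\; \mathcal R_{\mathcal U_1}\,\E[f_1]\,k,\]
where the equality uses that $f_1$ takes values in $\{0,1\}$. The crucial feature is that the factor $k$ on the right-hand side cancels the $1/|S|$ weight in our sum, leaving a simple geometric series.

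Substituting back, we obtain
\[\sum_{k=1}^{\infty}\frac{e^{-\delta k}}{k}\cdot\mathcal R_{\mathcal U_1}\,\E[f_1]\,k \;=\; \mathcal R_{\mathcal U_1}\,\E[f_1]\sum_{k=1}^{\infty} e^{-\delta k} \;\le\; \frac{2}{\delta}\,\mathcal R_{\mathcal U_1}\,\E[f_1],\]
the last inequality using $1-e^{-\delta}\ge \delta/2$ for $\delta\in(0,1]$ (and for larger $\delta$ the bound is immediate). Finally, Lemma \ref{lemma:revealment} supplies $\mathcal R_{\mathcal U_1}\le C\beta^{7/2}n^{-2/3}\log^{7/6}n$, which yields the advertised bound after absorbing constants. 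There is no significant obstacle here; all of the genuine work has already been done in Theorem \ref{thm:randalg} (the generalized Schramm–Steif bound that lets us restrict to sets $S$ avoiding $\mathcal U_1$) and in Lemma \ref{lemma:revealment} (where the lack of geometry of the Erd\H os–R\'enyi graph is exploited to show that the breadth-first search rarely touches any given non-$\mathcal U_1$ edge). The argument here is essentially a bookkeeping step that threads these two ingredients through the weighted Fourier sum produced by \eqref{eq:int_to_fourier}.
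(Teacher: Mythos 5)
Your proposal is correct and follows essentially the same route as the paper: reduce via the symmetry identity to a sum over sets avoiding $\mathcal U_1$, decompose by $|S|=k$, apply Theorem \ref{thm:randalg} with $U=\mathcal U_1$ so the factor $k$ cancels the weight $1/|S|$, sum the geometric series, and invoke Lemma \ref{lemma:revealment} together with $\E[f_1^2]=\E[f_1]$. The only difference is your bound $\sum_k e^{-\delta k}\le 2/\delta$ versus the paper's $e^{-\delta}/(1-e^{-\delta})\le \delta^{-1}$, which is immaterial since the constant is absorbed into $C$.
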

\begin{proof}
  First recall that by~\eqref{eq:empty_term_to_f_1} we have
  \begin{equation}\label{eq:sum_poly}
    \sum_{\substack{|S|>0;\\S\cap (\mathcal U_1\cup \mathcal U_2)=\emptyset}} \hspace{-4mm}\frac{e^{-\delta |S|}}{|S|}\hat f_1(S)\hat f_2(S) \leq \hspace{-2mm} \sum_{\substack{|S|>0;\\ S\cap \mathcal U_1=\emptyset}} \frac{e^{-\delta |S|}}{|S|}\hat f_1(S)^2 = \sum_{k=1}^{\binom{n}{2}}\frac{1}{k} e^{-\delta k}\hspace{-2mm} \sum_{\substack{|S|=k;\\ S\cap \mathcal U_1=\emptyset}}  \hat f_1(S)^2.
  \end{equation}
  By Theorem \ref{thm:randalg}, this is at most
  \[
    \sum_{k=1}^{\binom{n}{2}} e^{-\delta k} \mathcal R_{\mathcal U_1} \E[f_1^2].
  \]
  Since $f_1$ takes values in $\{0,1\}$, $\E[f_1^2]=\E[f_1]$, and by Lemma~\ref{lemma:revealment}, $\mathcal R_{\mathcal U_1} \leq C \beta^{7/2} n^{-2/3} \log^{7/6} n$.
  Finally, note that
  \[\sum_{k=1}^\infty e^{-\delta k} = \frac{e^{-\delta}}{1-e^{-\delta}} \le \delta^{-1}.\]
  Combining these three observations gives the desired result.
\end{proof}

\subsection{Completing the proof of Theorem~\ref{thm:noise} for small \texorpdfstring{$\beta$}{beta}}
We begin by recalling our argument from the start of Section \ref{sec:small_beta}. We began by defining $Z_v = \int_0^1 \ind_{\{|\C_v(t)|\in I\}}\, dt$ where $I=[\beta n^{2/3}\log^{1/3} n,2\beta n^{2/3}\log^{1/3} n]\cap\N$. From~\eqref{eq:peyley_zygmund} we know that
\[\P\Big(\sup_{t\in[0,1]} |L_n(t)| > \beta n^{2/3}\log^{1/3} n\Big) \ge \frac{n^2\E[Z_1]^2}{n\E[Z_1^2]+n(n-1)\E[Z_1 Z_2]}.\]
Lemma~\ref{lemma:expectation_squared} told us that if $\beta^3<16/3$ then $\frac{\E[Z_1^2]}{n\E[Z_1]^2}\to 0$ as $n\to\infty$, in which case we get that
\begin{equation}\label{eq:liminf_P}
  \liminf_{n\to\infty}\P \Big( \sup_{t\in [0,1]}|L_n(t)|> n^{2/3}\log^\beta n \Big)  \geq \liminf_{n\to\infty} \frac{\E[Z_1]^2}{\E[Z_1Z_2]}.
\end{equation}
We saw in~\eqref{eq:Z_1Z_2_by_integral} that
\[\E[Z_1Z_2]=\int_0^1\int_0^1 \P(|\mathcal C_1(s))|\in I;|\mathcal C_2(t)|\in I)\,{\rm d}t\,{\rm d}s.\]

Let $P = \P(|\C_1|\ge \beta n^{2/3}\log^{1/3} n)$.
Lemma \ref{lemma:smallt} gives
\begin{multline*}
  \int_0^1 \int_0^1 \P(|\mathcal C_1(s)|\in I; |\mathcal C_2(t)|\in I) \ind_{\{|t-s|\le\delta\}} \,{\rm d}t \,{\rm d}s \\
  \leq 2\delta P^2 +\frac{4\beta \log^{1/3} n}{n^{1/3}}\delta P + 2\delta^2 P.
\end{multline*}
By Proposition \ref{prop:pittel}(a) with $\lambda=0$, we have $P\le n^{-1/3-\beta^3/8}$ for large $n$, so (for large $n$)
\begin{multline}\label{firstint}
  \int_0^1 \int_0^1 \P(|\mathcal C_1(s)|\in I; |\mathcal C_2(t)|\in I) \ind_{\{|t-s|\le \delta\}} \,{\rm d}t \,{\rm d}s\\
  \leq 2\delta n^{-2/3-\beta^3/4} + 4\beta\delta(\log^{1/3}n)n^{-2/3-\beta^3/8} +2\delta^2 n^{-1/3-\beta^3/8}\\
  \le 5\beta\delta(\log^{1/3}n)n^{-2/3-\beta^3/8} +2\delta^2 n^{-1/3-\beta^3/8}
\end{multline}

To estimate the integral when $|t-s|> \delta$, we begin with \eqref{eq:int_to_fourier}, which says that
\begin{multline*}
\int_0^1\int_0^1 \P(|\mathcal C_1(s)|\in I; |\mathcal C_2(t)|\in I)\ind_{\{|t-s|>\delta\}} \,{\rm d}t\, {\rm d}s \\
\le \E[Z_1]^2 + 2\sum_{|S|>0} \frac{e^{-\delta |S|}}{|S|} \hat f_1(S) \hat f_2(S).
\end{multline*}
We now apply Lemmas \ref{lemma:U1U2intersect} and \ref{lemma:U1U2empty}, which tell us respectively that for large $n$,
\[\sum_{S:S\cap(\mathcal U_1\cup\mathcal U_2)\neq\emptyset} \hat f_1(S)\hat f_2(S) \le C(\beta^2+\beta^{-1/2}) n^{-1-\beta^3/8}\log n\]
and
\[\sum_{\substack{S: |S|>0;\\ S\cap (\mathcal U_1\cup\mathcal U_2)=\emptyset}} \frac{e^{-\delta|S|}}{|S|} \hat f_1(S)\hat f_2(S) \leq C\delta^{-1}\E[f_1]\beta^{7/2} n^{-2/3} \log^{7/6} n .\] 
for some finite constant $C$.
Combining these three equations and noting that (by Proposition \ref{prop:pittel}(a) with $\lambda=0$) $\E[f_1]\le n^{-1/3-\beta^3/8}$, we get
\begin{multline}\label{secondint}
  \int_0^1\int_0^1 \P(|\mathcal C_1(s)|\in I; |\mathcal C_2(t)|\in I)\ind_{\{|t-s|>\delta\}}\, {\rm d}t\, {\rm d}s \\
  \le \E[Z_1]^2 + 2C(\beta^2+\beta^{-1/2}) n^{-1-\beta^3/8}\log n + 2C\delta^{-1}\beta^{7/2} n^{-1-\beta^3/8}\log^{7/6}n.
\end{multline}

Combining \eqref{firstint} with \eqref{secondint} and plugging back into \eqref{eq:Z_1Z_2_by_integral}, we get
\begin{multline*}
\E[Z_1Z_2] \le \E[Z_1]^2 + 5\beta\delta(\log^{1/3}n)n^{-2/3-\beta^3/8} +2\delta^2 n^{-1/3-\beta^3/8}\\
 + 2C(\beta^2+\beta^{-1/2}) n^{-1-\beta^3/8}\log n + 2C\delta^{-1}\beta^{7/2} n^{-1-\beta^3/8}\log^{7/6}n.
\end{multline*}
Choosing $\delta=n^{-2/9}$, the biggest term above when $n$ is large is the last one. Thus in this case there exists a finite constant $C'$ depending on $\beta$ such that 
\[\E[Z_1Z_2] \le \E[Z_1]^2 + C' n^{-7/9-\beta^3/8}\log^{7/6} n.\]
By Proposition \ref{prop:pittel}(a) (with $\lambda=0$) we know that $\E[Z_1] \ge c n^{-\beta^3/8-1/3}\log^{-1/6}n$ for some constant $c>0$, so we get
\[\frac{\E[Z_1Z_2]}{\E[Z_1]^2} \le 1 + c' n^{-1/9+\beta^3/8}\log^{3/2} n\]
for some finite constant $c'$ (depending on $\beta$). 
For $\beta<2/3^{2/3}$, the above quantity tends to $1$ as $n\to\infty$, giving
\[\liminf_{n\to\infty} \frac{\E[Z_1]^2}{\E[Z_1Z_2]} \ge 1.\]
Therefore by \eqref{eq:liminf_P}, for any $\beta<2/3^{2/3}$,
\[\liminf_{n\to\infty}\P \Big( \sup_{t\in [0,1]}|L_n(t)|> \beta n^{2/3}\log^{1/3} n \Big) = 1.\]
We have shown that exceptional times exist with high probability for any $\beta<2/3^{2/3}$, and to complete the proof of Theorem \ref{thm:noise} it remains to show that with high probability there are no such times for any $\beta \ge 2/3^{1/3}$.

\section{No exceptional times when \texorpdfstring{$\beta\ge 2/3^{1/3}$}{beta is large}}\label{sec:large_beta}

Fix $\beta>0$.
For $i \in \{0,\dots,\lfloor n^{1/3}\rfloor\}$, consider the event
\[
  \mathcal E_i:=\{\exists t\in[i n^{-1/3},(i+1)n^{1/3}) : |L_n(t)|>\beta n^{2/3}\log^{1/3}n\}.
  \]
  The probability that an edge $e$ is turned on at any time in $[i n^{-1/3}, (i+1)n^{1/3})$ is at most $1/n + (1-e^{-n^{-1/3}})/n \le (1+n^{-1/3})/n$. Therefore for each $i$,
  \[\P(\mathcal E_i) \le \mathbf P_{n,n^{-1}+n^{-4/3}}(|L_n|>\beta n^{2/3}\log^{1/3}n)\]
  where we recall that $\mathbf P_{n,p}$ is the law of an ER$(n,p)$. Applying Proposition \ref{prop:pittel}(b) with $\lambda=1$, we get that for large $n$,
  \[\P(\mathcal E_i) \le \beta^{-3/2} n^{-\beta^3/8} e^{\frac{1}{2}\beta^2\log^{2/3}n}\log^{-1/2} n,\]
  so by a union bound,
  \[\P(\exists t\in[0,1] : |L_n(t)|>\beta n^{2/3}\log^{1/3}n) \hspace{-0.5mm}\le\hspace{-0.5mm} \beta^{-3/2} n^{1/3-\beta^3/8}e^{\frac{1}{2}\beta^2\log^{2/3}n}\log^{-\frac{1}{2}} n.\]
  This tends to zero as $n\to\infty$ if $\beta\ge 2/3^{1/3}$, which shows that with high probability there are no exceptional times in this regime. This completes the proof of Theorem \ref{thm:noise}.

  \section{Proving noise sensitivity}\label{sec:ns}
  
  In this section we prove Proposition \ref{prop:ns}.
  Throughout, let $\{\eps_n\}_{n\in \N}$ be a sequence such that $\lim_{n\to\infty} n^{1/6}\eps_n=\infty$, fix $a\in(0,\infty)$ and let $F_n=\ind_{\{|L_n|\ge an^{2/3}\}}$.
  We will show that $F_n$ is quantitatively noise sensitive with scaling $\eps_n$.
  Our path will be similar to (but in some ways simpler than) the proof that exceptional times exist for small $\beta$. There is one complication: when $A_n\to\infty$, the probability that there is a component of size larger than $A_n n^{2/3}$ is approximately the expected number of vertices in such components divided by $A_n n^{2/3}$; but this is not true for $A_n=a$ fixed.
  To get around this small problem we will use the following lemma which is a consequence of the FKG inequality.
  We use the notation of Section \ref{sec:fourier}. We recall that a function $f:\Omega\to\R$ is \emph{increasing} if turning bits on can only increase the value of $f$.
  
  \begin{lemma}\label{lemma:FKGcon}
  Suppose that $f,g:\Omega\to\R$ are functions such that $\E[f^2]<\infty$, $\E[g^2]<\infty$, and both $f$ and $g-f$ are increasing. Then for any $\eps\in[0,1]$,
  \[\E[g(\omega)g(\omega_\eps)] - \E[g(\omega)]^2 \ge \E[f(\omega)f(\omega_\eps)]-\E[f(\omega)]^2.\]
  \end{lemma}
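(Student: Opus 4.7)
The plan is to decompose $g=f+h$ where $h:=g-f$ is increasing by hypothesis, expand both sides in these pieces, and show that the three ``cross'' covariance terms that appear are all nonnegative by combining the FKG inequality with a monotonicity property of the resampling semigroup.

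More concretely, first I would write
\[
\E[g(\omega)g(\omega_\eps)] - \E[g]^2 = \bigl(\E[f(\omega)f(\omega_\eps)]-\E[f]^2\bigr) + \sum_{i=1}^{3} C_i,
\]
where $C_1 := \E[f(\omega)h(\omega_\eps)]-\E[f]\E[h]$, $C_2 := \E[h(\omega)f(\omega_\eps)]-\E[h]\E[f]$, and $C_3 := \E[h(\omega)h(\omega_\eps)]-\E[h]^2$. The conclusion then reduces to showing $C_1,C_2,C_3\ge 0$.

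For this, the key observation is that the resampling kernel $T_\eps \phi(\omega):=\E[\phi(\omega_\eps)\mid\omega]$ preserves monotone functions. Indeed, using the explicit representation $\omega_\eps(e)=\omega(e)\ind_{\{U_e>\eps\}}+\ind_{\{V_e<p\}}\ind_{\{U_e\le\eps\}}$, one can couple $\omega_\eps$ and $\omega'_\eps$ for $\omega\le\omega'$ using the same auxiliary uniforms so that $\omega_\eps\le\omega'_\eps$ pointwise, and therefore $T_\eps\phi$ is increasing whenever $\phi$ is. Since $\P$ is a product measure, the classical FKG inequality applies: for any increasing $\phi,\psi\in L^2(\P)$ we have $\E[\phi\psi]\ge\E[\phi]\E[\psi]$. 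Applying this with the pair $(f,T_\eps h)$, which are both increasing, gives
\[
\E[f(\omega)h(\omega_\eps)] = \E[f(\omega)\,T_\eps h(\omega)] \ge \E[f]\,\E[T_\eps h] = \E[f]\E[h],
\]
where the final equality uses stationarity of $\P$ under the resampling. This shows $C_1\ge 0$; the same argument applied to $(h,T_\eps f)$ gives $C_2\ge 0$, and applied to $(h,T_\eps h)$ gives $C_3\ge 0$.

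The only step that might trip one up is verifying that $T_\eps$ preserves monotonicity, but this is immediate from the monotone coupling above, so I do not anticipate any real obstacle. The proof is therefore essentially a three-line decomposition followed by three applications of FKG.
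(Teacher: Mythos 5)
Your proposal is correct, and it uses the same basic decomposition as the paper: write $g=f+h$ with $h=g-f$ increasing, and reduce the claim to the non-negativity of the three cross covariances. Where you differ is in how those covariances are killed. The paper handles the $h$--$h$ term with Lemma \ref{lemma:noise_fourier} (so that step needs no monotonicity of $h$ at all), notes that the two mixed terms coincide, and then makes a single application of FKG to the increasing functions $(\omega,\omega_\eps)\mapsto f(\omega)$ and $(\omega,\omega_\eps)\mapsto h(\omega_\eps)$ on the extended space carrying the pair $(\omega,\omega_\eps)$. You instead bound all three terms by the classical Harris--FKG inequality on $\Omega$ itself, after observing that the resampling kernel $T_\eps\phi(\omega)=\E[\phi(\omega_\eps)\mid\omega]$ preserves monotonicity; your monotone coupling via the representation $\omega_\eps(e)=\omega(e)\ind_{\{U_e>\eps\}}+\ind_{\{V_e<p\}}\ind_{\{U_e\le\eps\}}$ is exactly right, and stationarity gives $\E[T_\eps h]=\E[h]$ as you use. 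Your route is slightly more self-contained in that it never invokes the Fourier identity, at the cost of needing the (easy) monotonicity-preservation of $T_\eps$; the paper's is slightly shorter since Lemma \ref{lemma:noise_fourier} is already in hand. Both arguments are complete.
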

  
  \begin{proof}
  Let $h=g-f$. By applying Lemma \ref{lemma:noise_fourier} to $h$,
  \[\E[h(\omega)h(\omega_\eps)] \ge \E[h(\omega)]^2.\]
  Expanding in terms of $f$ and $g$, and rearranging, we get
  \begin{align*}
  &\E[g(\omega)g(\omega_\eps)] - \E[g(\omega)]^2 - \E[f(\omega)f(\omega_\eps)] + \E[f(\omega)]^2\\
  &\hspace{10mm}\ge \E[f(\omega)g(\omega_\eps)] + \E[f(\omega_\eps)g(\omega)]\\
  &\hspace{20mm} - 2\E[f(\omega)]\E[g(\omega)] - 2\E[f(\omega)f(\omega_\eps)] + 2\E[f(\omega)]^2\\
  &\hspace{10mm}= 2\big(\E[f(\omega)(g(\omega_\eps)-f(\omega_\eps))] - \E[f(\omega)]\E[g(\omega)-f(\omega)]\big)\\
  &\hspace{10mm}= 2\big(\E[f(\omega)h(\omega_\eps)] - \E[f(\omega)]\E[h(\omega_\eps)]\big).
  \end{align*}
  Now applying the FKG inequality to the two increasing random variables $(\omega,\omega_\eps)\mapsto f(\omega)$ and $(\omega,\omega_\eps)\mapsto h(\omega_\eps)$ shows that the last line is non-negative, and the result follows.
  \end{proof}
  
  We now follow the same strategy as in Section \ref{sec:largets}. We also use much of the same notation, just with a different value of $N$. Recall that for a vertex $v\in\{1,\ldots,n\}$,
  \[f_v = \ind_{\{|\C_v|\ge N\}}\]
  and $\mathcal U_v$ is the set of edges with an endpoint at $v$. (Of course these objects also depend on $n$, but we omit this from the notation.) Lemma \ref{lemma:FKGcon} will allow us to relate the noise sensitivity of $F_n$ to quantities involving the Fourier coefficients of $f_1$ and $f_2$, so we turn our attention to bounding those.

  Our first lemma is the equivalent of Lemma \ref{lemma:U1U2intersect}, using pivotality estimates to bound the Fourier coefficients of $f_1$ and $f_2$ on sets that intersect $\mathcal U_1\cup \mathcal U_2$.

  \begin{lemma}\label{lemma:U1U2intersect2}
  Let $N=\lceil an^{2/3}\rceil$. Then there exists a finite constant $C$ such that
  \[
    \sum_{S:S \cap (\mathcal U_1\cup\mathcal U_2)\neq \emptyset}\hat f_1(S)\hat f_2(S)\leq \frac{C}{n}.
  \]
\end{lemma}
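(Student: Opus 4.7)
The argument closely parallels the proof of Lemma \ref{lemma:U1U2intersect}. Since $f_1$ and $f_2$ are both monotone increasing they are jointly monotone, so Lemma \ref{lemma:fourier_to_pivotal} applies edge by edge. Splitting $\mathcal{U}_1\cup\mathcal{U}_2$ into the single edge $(1,2)$ and the $2(n-2)$ edges with exactly one endpoint in $\{1,2\}$, and exploiting the symmetry of the vertices other than $1$ and $2$, I would obtain
\[
\sum_{S:S\cap(\mathcal{U}_1\cup\mathcal{U}_2)\neq\emptyset}\hat f_1(S)\hat f_2(S) \le \frac{1}{n}\P\big((1,2)\in\mathcal{P}_{f_1}\cap\mathcal{P}_{f_2}\big) + 2\P\big((1,3)\in\mathcal{P}_{f_1}\cap\mathcal{P}_{f_2}\big).
\]

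To bound the first pivotality probability, I would note (as in Lemma \ref{lemma:U1U2intersect}) that the event $\{(1,2)\in\mathcal{P}_{f_1}\cap\mathcal{P}_{f_2}\}$ is independent of the state of $(1,2)$, and conditionally on $(1,2)$ being closed it forces $\mathcal{C}_1\cap\mathcal{C}_2=\emptyset$, $|\mathcal{C}_1|<N$, $|\mathcal{C}_2|<N$, and $|\mathcal{C}_1\cup\mathcal{C}_2|\ge N$. Feeding this into Lemma \ref{lemma:Cuandv} with $N=\lceil a n^{2/3}\rceil$ yields a bound of order $N^2 e^{-N^3/(8n^2)}/n^2 = O(n^{-2/3})$, where the exponential factor is just the bounded constant $e^{-a^3/8}$.

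For the second pivotality probability, I would recycle the symmetry argument from Lemma \ref{lemma:U1U2intersect} to reduce
\[
\P\big((1,3)\in\mathcal{P}_{f_1}\cap\mathcal{P}_{f_2}\big) \le 2\P\big(2\in\mathcal{C}_1,\,(1,3)\in\mathcal{P}_{f_1}\big),
\]
and then observe that on $\{(1,3)$ closed$\}$ the right-hand event forces $\mathcal{C}_1\cap\mathcal{C}_3=\emptyset$, $|\mathcal{C}_1|<N$, $|\mathcal{C}_1\cup\mathcal{C}_3|\ge N$ and $2\in\mathcal{C}_1$. Lemma \ref{lemma:Cuvw} then gives a bound of order $\big(1/(n^{2/3}N^{1/2}) + N^3/n^3\big)e^{-N^3/(8n^2)} = O(1/n)$. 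Combining the two estimates yields $(1/n)\cdot O(n^{-2/3}) + 2\cdot O(1/n) = O(1/n)$, which is the claim.

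The only complication I foresee is that Lemma \ref{lemma:Cuandv} as stated requires $N\ge 2n^{2/3}$, so when $a<2$ one must briefly extend its range. This is harmless: in the relevant inner sum $\sum_{k=\lceil N/2\rceil}^{N-1}$, the terms with $k\ge n^{2/3}$ are handled exactly as before via Lemma \ref{lemma:branching_approx}(b), while the terms with $k<n^{2/3}$ are bounded using Lemma \ref{lemma:branching_approx}(a), whose bound $e^{-F_0(k/n^{2/3})}/k^{3/2}$ is even better. Since $e^{-N^3/(8n^2)}$ is a bounded constant throughout this regime, only the constants $c$ are affected and the $O(n^{-2/3})$ order of the pivotality estimate survives. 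No deeper obstacle arises, since unlike Lemma \ref{lemma:U1U2intersect} there are no logarithmic factors to track and the target bound $C/n$ has considerable slack.
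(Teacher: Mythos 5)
Your proposal is correct and follows essentially the same route as the paper: the decomposition via Lemma \ref{lemma:fourier_to_pivotal} into the $(1,2)$ term and the $(1,3)$ term, the symmetry reduction to $2\P(2\in\C_1,\,(1,3)\in\mathcal P_{f_1})$, and the application of Lemma \ref{lemma:Cuvw} (which only needs $N\ll n^{3/4}$) to get the dominant $O(1/n)$ bound are exactly the paper's steps. The one divergence is the $(1,2)$ term: the paper simply bounds $\P((1,2)\in\mathcal P_{f_1}\cap\mathcal P_{f_2})\le 1$, so that term is at most $1/n$ and Lemma \ref{lemma:Cuandv} is never invoked; your sharper $O(n^{-5/3})$ estimate, and the accompanying extension of Lemma \ref{lemma:Cuandv} to $N<2n^{2/3}$, is sound but unnecessary given the target bound $C/n$.
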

\begin{proof}
  Just as in the proof of Lemma \ref{lemma:U1U2intersect}, we have
  \begin{equation}\label{eq:fhattoPbd2}
  \sum_{S:S \cap (\mathcal U_1\cup\mathcal U_2)\neq \emptyset}\hat f_1(S)\hat f_2(S) \leq \frac{1}{n}\P((1,2)\in\mathcal P_{f_1}\cap\mathcal P_{f_2}) + 2\P((1,3)\in \mathcal P_{f_1}\cap\mathcal P_{f_2}). 
  \end{equation}
  The first term on the right-hand side is at most $1/n$, so we can concentrate on the second term. Again following the argument to prove Lemma \ref{lemma:U1U2intersect}, 
  \[\P((1,3)\in\mathcal P_{f_1}\cap\mathcal P_{f_2}) \le 2\P\big( |\C_1\cup \C_3|\ge N, \, |\C_1| < N, \, \C_1\cap \C_3=\emptyset, \, 2\in \C_1\big).\]
  Applying Lemma \ref{lemma:Cuvw} we get
  \[\P((1,3)\in\mathcal P_{f_1}\cap\mathcal P_{f_2}) \le \frac{c}{n}\]
  for some finite constant $c$. Plugging this back into \eqref{eq:fhattoPbd2}, we have
  \[\sum_{S:S \cap (\mathcal U_1\cup\mathcal U_2)\neq \emptyset}\hat f_1(S)\hat f_2(S) \le \frac{C}{n}\]
  for some finite constant $C$.
\end{proof}

Next we bound the revealment of the breadth first search algorithm seen in Section \ref{sec:largets}, similarly to Lemma \ref{lemma:revealment}.

\begin{lemma}\label{lemma:revealment2}
  Let $A$ be the breadth first search described above Lemma \ref{lemma:revealment}, and let $N=\lceil an^{2/3}\rceil$. Then there exists a finite constant $C$ such that
  \[
    \mathcal R_{\mathcal U_1} \leq C n^{-2/3}.
  \]
\end{lemma}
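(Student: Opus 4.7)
My plan is to mimic the proof of Lemma~\ref{lemma:revealment} almost verbatim, exploiting the fact that replacing $N=\lceil\beta n^{2/3}\log^{1/3}n\rceil$ by $N=\lceil an^{2/3}\rceil$ simply removes the logarithmic factors without changing the structure of the estimate. Writing $\tau$ for the termination step of the breadth first search $A$, the symmetry argument from the proof of Lemma~\ref{lemma:revealment} yields
\[
\P(A\text{ reveals }\omega(e))\le\tfrac{2}{n}\E[|\mathcal S_{\tau-1}|]
\]
for any $e\notin\mathcal U_1$, so the task reduces to showing $\E[|\mathcal S_{\tau-1}|]=O(n^{1/3})$.

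Since $|\mathcal S_{\tau-1}|\le N\wedge|\C_1|$ almost surely, I would split this expectation at $k=\lfloor n^{2/3}\rfloor$:
\[
\E[|\mathcal S_{\tau-1}|]\le\sum_{k=1}^{\lfloor n^{2/3}\rfloor}k\P(|\C_1|=k)+\sum_{k=\lfloor n^{2/3}\rfloor+1}^{N}k\P(|\C_1|=k)+N\P(|\C_1|\ge N).
\]
Applying Lemma~\ref{lemma:branching_approx}(a) and (b) (each with $\lambda=0$) to the first two sums would give contributions of order $n^{1/3}$ and $N^{7/2}/n^2$ respectively; since $N=O(n^{2/3})$ the latter is also $O(n^{1/3})$. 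For the tail term I would use Proposition~\ref{prop:pittel}(a) with $\lambda=0$ and $A_n=a$ fixed, which gives $\P(|\C_1|\ge N)=O(n^{-1/3})$, and so $N\P(|\C_1|\ge N)=O(n^{1/3})$ as well.

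Summing these three $O(n^{1/3})$ contributions and inserting into the revealment bound immediately yields $\mathcal R_{\mathcal U_1}\le Cn^{-2/3}$. There is essentially no obstacle here: the logarithmic factors that required care in the previous lemma are simply absent, and every term lines up to the same scale $n^{-2/3}$. The only point worth a brief verification is that Proposition~\ref{prop:pittel}(a) is applicable with $A_n$ a positive constant rather than diverging---this holds because the hypothesis $(3\lambda\wedge1)\le A_n\ll n^{1/12}$ reduces to $0\le A_n\ll n^{1/12}$ when $\lambda=0$, and the $O(1/A_n)$ relative error simply becomes a harmless constant factor absorbed into $C$.
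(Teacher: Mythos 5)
Your proposal is correct and follows essentially the same route as the paper: the same symmetry bound $\P(A\text{ reveals }\omega(e))\le \tfrac{2}{n}\E[|\mathcal S_{\tau-1}|]$, the same split of the expectation via Lemma \ref{lemma:branching_approx} (with $\lambda=0$) plus the tail term handled by Proposition \ref{prop:pittel}(a), each contributing $O(n^{1/3})$. Your remark about Proposition \ref{prop:pittel}(a) applying with $A_n=a$ fixed, with the $O(1/A_n)$ error absorbed into the constant, is exactly the right (and only) point needing care.
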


\begin{proof}
  Just as in the proof of Lemma \ref{lemma:revealment}, for any edge $e=(v,w)$ with $v,w \neq 1$ we have
  \begin{equation}\label{eq:reveal_expectation_estimate2}
  \P(A \text{ reveals } \omega(e)) \leq \frac{2}{n}\E[|\mathcal S_{\tau-1}|]
  \end{equation}
  and also $\mathcal S_{\tau-1}\subset \C_1$ and $|\mathcal S_{\tau-1}|\le N$. Combining these facts, then applying Proposition~\ref{prop:pittel} and Lemma~\ref{lemma:branching_approx}, we get that
  \begin{align*}
    \P(A\text{ reveals } \omega(e)) &\leq \sum_{k=1}^{N}\frac{2k}{n}\P(|\mathcal C_1|=k) +\frac{2N}{n}\P(|\mathcal C_1|\ge N)\\ 
    &\leq  \sum_{k=1}^{N\vee n^{2/3}}\frac{2k}{n}\frac{c}{k^{3/2}} + \frac{2N}{n} \frac{c}{n^{1/3}}\\ 
    & \leq c' n^{-2/3}
  \end{align*}
  for some finite constants $c,c'$, as required.
\end{proof}

Lemma \ref{lemma:revealment2} allows us to give a bound on the Fourier coefficients of $f_1$ and $f_2$ on sets that do not intersect $\mathcal U_1$ or $\mathcal U_2$.

\begin{lemma}\label{lemma:U1U2empty2}
  Let $N=\lceil an^{2/3}\rceil$. There exists a finite constant $C$ such that for any $\eps \in(0,1)$,
  \[
    \sum_{\substack{|S|>0;\\S\cap (\mathcal U_1\cup\mathcal U_2)=\emptyset}}(1-\eps)^{|S|}\hat f_1(S)\hat f_2(S) \leq C\eps^{-2} \E[f_1] n^{-2/3}.
  \]
\end{lemma}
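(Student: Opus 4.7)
The plan is to follow the strategy used in Lemma \ref{lemma:U1U2empty}, but with the weight $(1-\eps)^{|S|}$ in place of $e^{-\delta|S|}/|S|$. This will change the sum over $k$ from a geometric series (giving $\delta^{-1}$) to an arithmetico-geometric series (giving $\eps^{-2}$), which accounts for the different power of the parameter in the bound.

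First I would exploit the symmetry between vertices $1$ and $2$: if $S\cap(\mathcal U_1\cup\mathcal U_2)=\emptyset$, then by relabelling vertices we have $\hat f_1(S)=\hat f_2(S)$. So
\[
\sum_{\substack{|S|>0;\\ S\cap(\mathcal U_1\cup\mathcal U_2)=\emptyset}} (1-\eps)^{|S|}\hat f_1(S)\hat f_2(S) \;\le\; \sum_{\substack{|S|>0;\\ S\cap\mathcal U_1=\emptyset}} (1-\eps)^{|S|}\hat f_1(S)^2.
\]
Grouping the right-hand side by $k=|S|$ gives $\sum_{k\ge 1}(1-\eps)^k \sum_{|S|=k,\,S\cap\mathcal U_1=\emptyset}\hat f_1(S)^2$.

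Next I would apply Theorem \ref{thm:randalg} to the breadth-first-search algorithm $A$ described just before Lemma \ref{lemma:revealment}, with $U=\mathcal U_1$. This yields
\[
\sum_{\substack{|S|=k;\\ S\cap\mathcal U_1=\emptyset}}\hat f_1(S)^2 \;\le\; \mathcal R_{\mathcal U_1}\,\E[f_1^2]\,k.
\]
Since $f_1$ is $\{0,1\}$-valued we have $\E[f_1^2]=\E[f_1]$, and by Lemma \ref{lemma:revealment2}, $\mathcal R_{\mathcal U_1}\le C_1 n^{-2/3}$ for some finite constant $C_1$. Substituting these bounds back, the whole sum is at most
\[
C_1\,\E[f_1]\,n^{-2/3}\sum_{k\ge 1} k(1-\eps)^k.
\]

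Finally, the elementary identity $\sum_{k\ge 1}k(1-\eps)^k = (1-\eps)/\eps^2 \le \eps^{-2}$ closes the estimate. There is no real obstacle here; the argument is a direct transcription of Lemma \ref{lemma:U1U2empty} with the alternative weighting, where the only change is that the geometric-series bound $\sum_{k\ge 1}e^{-\delta k}\le \delta^{-1}$ is replaced by the arithmetico-geometric bound $\sum_{k\ge 1}k(1-\eps)^k\le \eps^{-2}$. Combining constants gives the stated inequality.
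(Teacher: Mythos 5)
Your proposal is correct and follows essentially the same route as the paper: reduce to $\sum_{|S|>0,\,S\cap\mathcal U_1=\emptyset}(1-\eps)^{|S|}\hat f_1(S)^2$ by symmetry, apply Theorem \ref{thm:randalg} with $U=\mathcal U_1$ together with Lemma \ref{lemma:revealment2}, and sum the arithmetico-geometric series $\sum_k k(1-\eps)^k\le\eps^{-2}$. No gaps.
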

\begin{proof}
  Following the proof of Lemma \ref{lemma:U1U2empty}, we have
  \[\sum_{\substack{|S|>0;\\S\cap (\mathcal U_1\cup \mathcal U_2)=\emptyset}} (1-\eps)^{|S|} \hat f_1(S)\hat f_2(S) \leq \sum_{k=1}^{\binom{n}{2}} k(1-\eps)^{k} \mathcal R_{\mathcal U_1} \E[f_1^2].\]
  Since $f_1$ takes values in $\{0,1\}$, $\E[f_1^2]=\E[f_1]$, and by Lemma~\ref{lemma:revealment2}, $\mathcal R_{\mathcal U_1} \leq C n^{-2/3}$.
  Finally, note that
  \[\sum_{k=1}^\infty k(1-\eps)^k = \frac{1-\eps}{\eps^2} \le \eps^{-2}.\]
  Combining these three observations gives the desired result.
\end{proof}

We now have the tools to prove our noise sensitivity result.

\begin{proof}[Proof of Proposition \ref{prop:ns}]
  Recall that $F_n = \ind_{\{|L_n|\ge an^{2/3}\}}$ and suppose that $\lim_{n\to\infty}n^{1/6}\eps_n=\infty$.
  Define
\[G_n = \frac{1}{an^{2/3}} \sum_{v=1}^n \ind_{\{|\C_v|\ge an^{2/3}\}}.\]
Then $F_n\le G_n$ and both $F_n$ and $G_n-F_n$ are increasing, so by Lemma \ref{lemma:FKGcon} it suffices to show that
\[\E[G_n(\omega)G_n(\omega_{\eps_n})] - \E[G_n(\omega)]^2 \to 0.\]
We know from Lemma \ref{lemma:noise_fourier} that this quantity is non-negative, so it suffices to give an upper bound. But if we set $N=\lceil an^{2/3}\rceil$ then
\[G_n = \frac{1}{an^{2/3}}\sum_v f_v,\]
so
\begin{align}
&\E[G_n(\omega)G_n(\omega_{\eps_n})] - \E[G_n(\omega)]^2\nonumber\\
&\hspace{15mm} = \frac{1}{a^2 n^{4/3}} \sum_{u,v}\big(\E[f_u(\omega)f_v(\omega_{\eps_n})]-\E[f_u(\omega)]\E[f_v(\omega)]\big)\nonumber\\
&\hspace{15mm} = \frac{n}{a^2 n^{4/3}} \big(\E[f_1(\omega)f_1(\omega_{\eps_n})]-\E[f_1(\omega)]^2]\big) \nonumber\\
&\hspace{25mm}+ \frac{n(n-1)}{a^2 n^{4/3}}\big(\E[f_1(\omega)f_2(\omega_{\eps_n})]-\E[f_1(\omega)]\E[f_2(\omega)]\big)\nonumber\\
&\hspace{15mm} \le \frac{1}{a^2 n^{1/3}} + \frac{n^{2/3}}{a^2} \sum_{S\neq \emptyset} \hat f_1(S) \hat f_2(S) (1-\eps_n)^{|S|}\label{eq:Gnbd}
\end{align}
where we used Lemma \ref{lemma:noise_fourier} to get the last line.

By Lemma \ref{lemma:U1U2intersect2} we have
\[\sum_{S:S \cap (\mathcal U_1\cup\mathcal U_2)\neq \emptyset}\hat f_1(S)\hat f_2(S)\leq \frac{C}{n},\]
and by Lemma \ref{lemma:U1U2empty2} we have
\[\sum_{\substack{|S|>0;\\S\cap (\mathcal U_1\cup\mathcal U_2)=\emptyset}}(1-\eps_n)^{|S|}\hat f_1(S)\hat f_2(S) \leq C\eps_n^{-2} \E[f_1] n^{-2/3},\]
for some finite constant $C$. By Proposition \ref{prop:pittel} and Lemma \ref{lemma:branching_approx},
\[\E[f_1] \le cn^{-1/3}\]
for some finite constant $c$, and so putting the above estimates together we get
\[\sum_{S\neq\emptyset}(1-\eps_n)^{|S|}\hat f_1(S)\hat f_2(S) \leq \frac{C}{n} + \frac{C\cdot c}{\eps_n^2 n}.\]
Substituting this back into \eqref{eq:Gnbd} gives
\[\E[G_n(\omega)G_n(\omega_{\eps_n})] - \E[G_n(\omega)]^2 \le \frac{1}{a^2 n^{1/3}} + \frac{n^{2/3}}{a^2} \Big(\frac{C}{n} + \frac{C\cdot c}{\eps_n^2 n}\Big)\]
which tends to $0$ since $n^{1/3}\eps_n^2 \to \infty$.
\end{proof}

\section*{Acknowledgements}

  MR and B\c{S} are grateful for support from EPSRC grants EP/K007440/1, EP/H023348/1 and EP/L002442/1; and also thank an anonymous referee for some very useful comments and questions. MR would like to thank Jeffrey Steif for helpful conversations, and both Christophe Garban and Jeffrey Steif for their excellent lecture courses at the Clay Mathematics Summer School in 2010 and the Saint-Flour Summer School in Probability in 2012.

\end{document}